\numberwithin{equation}{section}
\newtheorem{theorem}{Theorem}[section]
\newtheorem{lemma}[theorem]{Lemma}
\newtheorem{proposition}[theorem]{Proposition}
\newtheorem{remark}[theorem]{Remark}
\newenvironment{proof}[1][Proof]{\textbf{#1.} }{\hfill  $\Box$}
\def\R{{\Bbb R}}
\def\E{{{\Bbb E}}}
\def\P{{\Bbb P}}
\def\Z{{\Bbb Z}}
\def\F{{\cal F}}
\def\al{{\alpha}}
\def\frac#1#2{{#1\over #2}}
\def\square{{\vcenter{\vbox{\hrule height.3pt
         \hbox{\vrule width.3pt height5pt \kern5pt
            \vrule width.3pt}
         \hrule height.3pt}}}}
\def\tlint{{- \kern-0.85em \int \kern-0.2em}}  
\def\dlint{{- \kern-1.05em \int \kern-0.4em}}  
\def\si{\sigma}
\def\cS {{\cal S}}
  \def\bR {{\Bbb R}}
\def\RR{\mathbb{R}}
\def\EE{\mathbb{E}}
\def\cA{{\cal A}}
\def\cD{{\cal D}}
\def\cF{{\cal F}}
\def\cH{{\cal H}}
\def\de{{\delta}}
\def\si{{\sigma}}
\def\al{{\alpha}}
\def\de{{\delta}}
\def\si{{\sigma}}
\def\e{{\varepsilon}}
\def \eref#1{\hbox{(\ref{#1})}}
\def\EE{\mathbb{ E}\ }
\def \eref#1{\hbox{(\ref{#1})}}
\def\si{{\sigma}}
\def\al{{\alpha}}
\begin{document}

\title{Temporal  asymptotics  for  fractional parabolic Anderson model}
\author{Xia Chen, Yaozhong Hu, Jian Song, and Xiaoming Song}
\date{April 12, 2016} \maketitle
\begin{abstract}
 In this paper, we consider  fractional parabolic equation of the form $ \frac{\partial u}{\partial t}=-(-\Delta)^{\frac{\alpha}{2}}u+u\dot W(t,x)$,  where  $-(-\Delta)^{\frac{\alpha}{2}}$ with $\alpha\in(0,2]$ is a fractional Laplacian and $\dot W$ is a  Gaussian noise colored in space and time.  The  precise moment Lyapunov exponents for the Stratonovich solution and the Skorohod solution are obtained by using a variational inequality and a Feynman-Kac type large deviation result for space-time Hamiltonians driven by $\alpha$-stable process.  As a byproduct, we obtain the critical values for $\theta$ and $\eta$ such that $\E\exp\left(\theta\left(\int_0^1 \int_0^1 |r-s|^{-\beta_0}\gamma(X_r-X_s)drds\right)^\eta\right)$ is finite, where $X$ is $d$-dimensional symmetric $\alpha$-stable process and $\gamma(x)$ is $|x|^{-\beta}$ or $\prod_{j=1}^d|x_j|^{-\beta_j}$.
\end{abstract}

\noindent {\em Keywords:} Lyapunov exponent, Gaussian noise, $\alpha$-stable process, fractional parabolic Anderson model, Feynman-Kac representation.

\noindent {\em AMS subject classification (2010):} 60F10, 60H15, 60G15, 60G52.

\section{Introduction}
Let    $\left\{\dot W(t,x)\,, t\ge 0\,, x\in \RR^d\right\}$ be  a general mean zero  
Gaussian noise on some probability space 
$(\Omega, {\cal F}, \P)$ whose  covariance function is given by 
\[
\E[\dot W(r,x) \dot W(s,y)]=|r-s|^{-\beta_0}\gamma(x-y),\
\]
where $\beta_0\in[0,1)$ and
\[
\gamma(x)=
\begin{cases}
|x|^{-\beta} &\qquad \hbox{where $\beta\in[0,d) $ \ or }\\
\prod_{j=1}^d\vert x_j\vert^{-\beta_j} &\qquad \hbox{where $\beta_j\in[0,1), j=1,\dots, d$}.\\
\end{cases}
\]
 If we abuse the notation $\beta=\sum_{i=1}^d \beta_i$,   the spatial covariance function has the following scaling property  
\begin{equation}\label{gammascaling}
\gamma(cx)=\vert c\vert^{-\beta}\gamma(x)
\end{equation} 
 for both cases. 
In this paper,  we shall study  the following fractional parabolic Anderson model,
\begin{equation}\label{pam}
\begin{cases}
\dfrac{\partial u}{\partial t}=-(-\Delta)^{\frac{\alpha}{2}}u+u\dot W(t,x), \, t>0, x\in \R^d \\
u(0,x)=u_0(x),\, x\in \R^d\,, 
\end{cases}
\end{equation}%
where  $-(-\Delta)^{\frac\al2}$ with  $0<\al\le 2$
 is the fractional Laplacian and where the initial condition satisfies  $0<\delta\le |u_0(x)|\le M<\infty$.  Without loss of of generality, we assume $u_0(x)\equiv 1$ when we study the long-term asymptotics of $u(t,x)$. The product $u\dot W(t,x)$ appearing in the above equation  will be understood in the sense of Skorohod and in the sense of Stratonovich. 
 
Let us recall some results from \cite{Song} for the SPDE \eqref{pam}.  
\begin{enumerate}
\item[(i)] 
Theorem 5.3 in \cite{Song} implies that, under the following condition:
\begin{equation}\label{Dalang}
 \beta<\alpha\,, 
\end{equation}
Eq. \eref{pam} in the Skorohod sense has a unique mild solution $\tilde u(t,x)$, and its $n$-th moment can be represented as  (see \cite[Theorem 5.6]{Song})
\begin{equation}\label{momskr}
  \E[\tilde u(t,x)^n]=\E_X\left[\prod_{j=1}^n u_0(X_t^j+x)\exp\left(\sum_{1\le j<k\le n} \int_0^t\int_0^t |r-s|^{-\beta_0}\gamma(X_r^j-X_s^k)drds\right)\right]\,,
  \end{equation}
where   $X_1,\dots, X_n$ are $n$ independent copies of   $d$-dimensional {\em symmetric  $\alpha$-stable process}   and are  independent of $W$, and   $\E_{X}$ denotes the expectation with respect to $
(X_{t}^{x}, t\ge 0)$. 
\item[(ii)]  Under a more restricted condition: 
\begin{equation}\label{Dalang'}
\alpha\beta_0+\beta<\alpha 
\end{equation}
the following Feynman-Kac formula
\begin{equation}
u(t,x)=\E_{X}\left[ u_0(X_{t}^{x})\exp \left(
\int_{0}^{t}\int_{\mathbb{R}^d} \delta_0
(X_{t-r}^{x}-y)W(dr,dy)\right) \right] \,,  \label{feynman}
\end{equation}
 is a mild Stratonovich solution to (\ref{pam})  (see   \cite[Theorem 4.6]{Song}),  where 
     $\de_0(x)$ is the Dirac delta function. Consequently, Theorem 4.8 in \cite{Song} provides a Feynman-Kac type representation for $n$-th moment of $u(t,x)$
\begin{equation}\label{momstr}
 \E[u(t,x)^n]=\E\left[\prod_{j=1}^n u_0(X_t^j+x)\exp\left(\frac12\sum_{j,k=1}^n \int_0^t\int_0^t |r-s|^{-\beta_0}\gamma(X_r^j-X_s^k)drds\right)\right]\,.
 \end{equation}
  
\end{enumerate}
 
  
The more restricted condition \eqref{Dalang'} is to ensure that the ``diagonal" terms,  i.e., 
the sum $\sum_{k=1}^n \int_0^t\int_0^t |r-s|^{-\beta_0}\gamma(X_r^k-X_s^k)drds$ appearing in \eqref{momstr} are exponentially integrable (see  Lemma \ref{integrability} and  Theorem \ref{thm-exponential}, or \cite[Theorem 3.3]{Song} in a more general setting).  To deal with the moments given by \eqref{momskr} and that given by 
\eqref{momstr} simultaneously,  we introduce,  under the condition \eqref{Dalang'}, for a positive $\rho\in [0, 1]$, 
\begin{align}\label{urho}
u^{\rho}(t,x):=
\E_X\Bigg[  u_0(X_{t}^{x})\exp \bigg(
\int_{0}^{t}\int_{\mathbb{R}^d} \delta_0
(X_{t-r}^{x}-y)W(dr,dy)\notag\\
\qquad\qquad -\frac\rho2\int_0^t\int_0^t |r-s|^{-\beta_0}\gamma(X_r-X_s)drds\bigg)\Bigg]\,. 
\end{align}
When $\rho=0,$ $u^\rho(t,x)$ is the Stratonovich solution  $u(t,x)$ to \eqref{pam}, and 
when $\rho=1$, $u^\rho(t,x)$ is the Skorohod  solution  $\tilde u(t,x)$ to \eqref{pam}.  
The $n$-th moment of $u^{\rho}(t,x)$ for a positive integer $n$ is given by
\begin{align}\label{mom}
\E[|u^\rho(t,x)|^n]=\E\Bigg[\prod_{j=1}^n &u_0(X_t^j+x)\exp\Bigg(\frac12\sum_{j,k=1}^n \int_0^t\int_0^t |r-s|^{-\beta_0}\gamma(X_r^j-X_s^k)drds\notag\\
&-\frac{\rho}2\sum_{j=1}^n\int_0^t\int_0^t |r-s|^{-\beta_0}\gamma(X_r^j-X_s^j)drds\Bigg)\Bigg].
\end{align} 
Let us point out that when $\rho=1$, $\E[|u^\rho(t,x)|^n]$ is finite under the weaker 
condition \eqref{Dalang}.  

The goal of this article is to obtain  the precise  asymptotics, as $t\rightarrow \infty$,  of the $p$-th moment $\E\left[ |u^\rho(t,x)|^p\right]$ for any (fixed) 
positive real number $p$.  To describe our main result,  we recall the definition of Fourier transform and introduce some notations. Denote by $\cS(\R^{d})$ the Schwartz space of smooth functions that are rapidly decreasing on $\R^{d}$, and let $\cS'(\R^{d})$ be its dual space, i.e., the space of tempered distributions.
  Let $\widehat f(\xi)$ or $(\F f)(\xi)$ denote  the Fourier transform of $f$,  for $f$ in the space  $\mathcal S'(\R^{d})$ of tempered distributions. In particular, we set
$$\widehat f(\xi) =\int_{\R^{d}} e^{-2\pi i x\cdot \xi}f(x)dx,\text{ for } f\in L^1(\R^{d}). $$
We will also need the following notations.
\begin{align}
&\mathcal E_\alpha(f,f):=\int_{\R^d}|\xi|^\alpha|\widehat f(\xi)|^2d\xi\,;\notag\\
&\mathcal F_{\alpha,d} :=\left\{f\in L^2(\R^d): \|f\|_2=1 \text{ and } ~\mathcal E_\alpha(f,f)<\infty\right\}\,; \label{Falphad}\\
&\mathcal A_{\alpha,d}:=\left\{g(s,x): \int_{\R^d} g^2(s,x)dx=1, \forall s\in[0,1] \text{ and } \int_0^1\int_{\R^d}|\xi|^\alpha |\widehat g(s,\xi)|^2d\xi ds<\infty\right\}\,;\label{Ad}\\
&\mathbf M(\alpha,\beta_0, d, \gamma):=\sup_{g\in \mathcal A_{\alpha,d}}\Bigg\{\frac12\int_0^1\int_0^1 \int_{\R^{2d}}\frac{\gamma(x-y)}{|r-s|^{\beta_0}}g^2(s,x)g^2(r,y)dxdydrds\notag\\
&\qquad \qquad \qquad\qquad \qquad -\int_0^1\int_{\R^d}|\xi|^\alpha|\widehat g(s,\xi)|^2d\xi ds\Bigg\}
\,. \label{M}
\end{align}
The finiteness of the variational representation $\mathbf M(\alpha,\beta_0, d, \gamma)$, when $\beta<\alpha$, is established in the Appendix. Note that $\mathbf M(\alpha,\beta_0, d, \gamma)$ has the scaling property, for any $\theta >0$,
\begin{equation}\label{mscaling}
\mathbf M(\alpha,\beta_0, d,\theta \gamma)=\theta^{\frac{\alpha}{\alpha-\beta}}\mathbf M(\alpha,\beta_0, d, \gamma),
\end{equation}
which can be derived in the same way as Lemma 4.1 in \cite{CHSX}. The following is the main result in this paper.
\begin{theorem}\label{pa}  Let $\rho\in[0,1]$ and   assume  the condition \eqref{Dalang'} (and when $\rho=1$, the condition \eqref{Dalang'} is replaced by  the condition \eqref{Dalang}).   Let $p\ge 2$ be any real number or $p=1$.  Then
\[\lim_{t\to\infty}t^{-\frac{2\alpha-\beta-\alpha\beta_0}{\alpha-\beta}}\log\|u^\rho(t,x)\|_p= (p-\rho)^{\frac{\alpha}{\alpha-\beta}}\mathbf M(\alpha, \beta_0, d,\gamma),\]
where $\|u^\rho(t,x)\|_p=\Big(\E[|u^{\rho}(t,x)|^p]\Big)^{1/p}$. 
\end{theorem}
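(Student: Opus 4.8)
The plan is to prove the two matching inequalities
\[
\limsup_{t\to\infty}t^{-\kappa}\log\|u^\rho(t,x)\|_p\le(p-\rho)^{\frac{\alpha}{\alpha-\beta}}\mathbf M(\alpha,\beta_0,d,\gamma)\le\liminf_{t\to\infty}t^{-\kappa}\log\|u^\rho(t,x)\|_p,
\]
with $\kappa:=\frac{2\alpha-\beta-\alpha\beta_0}{\alpha-\beta}$ and, without loss of generality, $u_0\equiv1$; write $\Theta_t(X):=\int_0^t\int_0^t|r-s|^{-\beta_0}\gamma(X_r-X_s)\,dr\,ds$ for the self-interaction of one copy of the symmetric $\alpha$-stable process. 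The exponent $\kappa$ is dictated by the self-similarity $X_{ct}\overset{d}{=}c^{1/\alpha}X_t$ and the scaling \eqref{gammascaling}: after the change of variables compressing time to $[0,1]$ and rescaling space by $b_t:=t^{(1-\beta_0)/(\alpha-\beta)}$, the interaction terms of \eqref{mom} and the Dirichlet energies of the occupation profiles balance at order $t^\kappa$, and the horizon-$1$ variational problem \eqref{M} picks up the factor $t^\kappa$ — the same computation that produces the homogeneity \eqref{mscaling}. The statement thus reduces to a large deviation estimate at speed $t^\kappa$, and the single scalar ingredient driving it is the single-copy asymptotic
\[
\lim_{t\to\infty}t^{-\kappa}\log\E_X\exp\big(\vartheta\,\Theta_t(X)\big)=(2\vartheta)^{\frac{\alpha}{\alpha-\beta}}\mathbf M(\alpha,\beta_0,d,\gamma)\qquad(\vartheta>0),
\]
which I would establish from a Feynman-Kac / Donsker-Varadhan-type large deviation theorem for the $\alpha$-stable semigroup driven by the space-time singular Hamiltonian $|r-s|^{-\beta_0}\gamma(x-y)$ (its exponential integrability being Lemma~\ref{integrability}/Theorem~\ref{thm-exponential}) together with \eqref{mscaling} to absorb $\vartheta$ into $\gamma$; this statement subsumes the case $p=1,\ \rho=0$ and is the engine for the rest.

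For the upper bound, when $\rho\in[0,1)$ (so \eqref{Dalang'} is in force) I would apply Minkowski's integral inequality to the representation \eqref{urho}: for fixed $X$ the stochastic integral $\int_0^t\int_{\R^d}\delta_0(X_{t-r}^x-y)W(dr,dy)$ is a centered Gaussian of variance $\Theta_t(X)$, so $\exp$ of it has $L^p(\Omega)$-norm $\exp(\tfrac p2\Theta_t(X))$ and therefore
\[
\|u^\rho(t,x)\|_p\le\E_X\exp\big(\tfrac{p-\rho}{2}\,\Theta_t(X)\big).
\]
Since $p-\rho\ge0$, the single-copy asymptotic with $\vartheta=\tfrac{p-\rho}{2}$ gives $\limsup_t t^{-\kappa}\log\|u^\rho\|_p\le(p-\rho)^{\alpha/(\alpha-\beta)}\mathbf M(\alpha,\beta_0,d,\gamma)$. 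When $\rho=1$ and only the weaker condition \eqref{Dalang} is assumed the diagonal part of $\Theta_t$ may fail to be integrable and this route breaks down; one then works from the Skorohod moment formula \eqref{momskr}, which contains only the off-diagonal sum $\sum_{j<k}I_{jk}(t)$, $I_{jk}(t):=\int_0^t\int_0^t|r-s|^{-\beta_0}\gamma(X_r^j-X_s^k)\,dr\,ds$, bounds it above by the Feynman-Kac large deviation estimate for the $n$-tuple $(X^1,\dots,X^n)$ together with the variational (decoupling) inequality that the supremum over occupation profiles $g_1,\dots,g_n$ of $\sum_{j<k}\langle g_j^2,Kg_k^2\rangle-\sum_j\int_0^1\mathcal E_\alpha(g_j(s,\cdot),g_j(s,\cdot))\,ds$, for $K$ the positive-definite operator with kernel $|r-s|^{-\beta_0}\gamma(x-y)$, equals $n\,\mathbf M(\alpha,\beta_0,d,(n-1)\gamma)$ (attained at $g_1=\cdots=g_n$, by Cauchy-Schwarz for $K$), and reaches non-integer $p$ by regularizing the kernel.

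For the lower bound, when $p=1$ or $p=n\ge2$ is an integer I would use \eqref{mom}, whose exponent symmetrizes to $\sum_{1\le j<k\le n}I_{jk}(t)+\tfrac{1-\rho}{2}\sum_{j=1}^nI_{jj}(t)$. Restrict $(X^1,\dots,X^n)$ to the event $A$ on which each marginal occupation profile over $[0,t]$ is (in the rescaled picture) close to a common near-maximizer $g_\ast$ of the problem defining $\mathbf M(\alpha,\beta_0,d,(n-\rho)\gamma)$; by the lower half of the Feynman-Kac large deviation principle $\P_X(A)=\exp(-(1+o(1))\,t^\kappa\int_0^1\mathcal E_\alpha(g_\ast(s,\cdot),g_\ast(s,\cdot))\,ds)$, while on $A$ each $I_{jk}(t)$ is asymptotic to the $g_\ast$-interaction, so the exponent is at least $n(n-\rho)$ times half that interaction times $t^\kappa(1-o(1))$ — here positive-definiteness of $K$ is what makes the symmetric profile the correct one. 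Optimizing over $g_\ast$ and using \eqref{M} and \eqref{mscaling}, $\log\E[|u^\rho(t,x)|^n]\ge n\,\mathbf M(\alpha,\beta_0,d,(n-\rho)\gamma)\,t^\kappa(1-o(1))=n(n-\rho)^{\alpha/(\alpha-\beta)}\mathbf M(\alpha,\beta_0,d,\gamma)\,t^\kappa(1-o(1))$, which is the claim since $\log\|u^\rho\|_n=\tfrac1n\log\E[|u^\rho|^n]$; $p=1$ is the instance $n=1$. For a general real $p\ge2$, where \eqref{mom} is not at hand, I would argue directly on \eqref{urho}: restrict $\E_X$ to the same $A$ and use Jensen's inequality inside the $X$-average to replace $\int_0^t\int_{\R^d}\delta_0(X_{t-r}^x-y)W(dr,dy)$ by its $A$-conditional mean $G_A$ (a centered Gaussian in $W$) and $\Theta_t(X)$ by its $A$-conditional mean $\overline\Theta_A$, so that $u^\rho(t,x)\ge\P_X(A)\exp\big(G_A-\tfrac\rho2\overline\Theta_A\big)$ almost surely; taking the $p$-th $W$-moment gives $\E[|u^\rho|^p]\ge\P_X(A)^p\exp\big(-\tfrac{p\rho}{2}\overline\Theta_A+\tfrac{p^2}{2}Q_A\big)$ with $Q_A$, the variance of $G_A$, equal to the expected cross-interaction of two independent copies conditioned on $A$. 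Since $A$ concentrates the profile near $g_\ast$ (so that $\overline\Theta_A$ and $Q_A$ are both close to the $g_\ast$-interaction), after optimizing $g_\ast$ one gets $\E[|u^\rho|^p]\ge\exp\big(p\,\mathbf M(\alpha,\beta_0,d,(p-\rho)\gamma)\,t^\kappa(1-o(1))\big)=\exp\big(p(p-\rho)^{\alpha/(\alpha-\beta)}\mathbf M(\alpha,\beta_0,d,\gamma)\,t^\kappa(1-o(1))\big)$, the desired bound. (When $\rho=1$ under only \eqref{Dalang}, both bounds come from \eqref{momskr} for integer $p$ and are extended to real $p$ by kernel regularization.)

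The main obstacle is the single-copy asymptotic itself: proving the Feynman-Kac / Donsker-Varadhan principle at speed $t^\kappa$ for the symmetric $\alpha$-stable process under the doubly singular Hamiltonian $|r-s|^{-\beta_0}\gamma(x-y)$, and identifying its rate with the \emph{finite} variational constant $\mathbf M(\alpha,\beta_0,d,\gamma)$ of \eqref{M}, whose finiteness (for $\beta<\alpha$) is what the Appendix supplies. Handling the time singularity $|r-s|^{-\beta_0}$ and the spatial singularity ($|x|^{-\beta}$ or $\prod_j|x_j|^{-\beta_j}$) at once — precisely what $\beta<\alpha$, respectively $\alpha\beta_0+\beta<\alpha$, makes possible — both for the exponential integrability and for the matching upper large deviation bound is the technically heavy step; by comparison the path-confinement lower bound and the symmetrization of the $n$-body variational problem are routine once the one-body theory is in place. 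The hypothesis $p\ge2$ or $p=1$ enters only through the moment-comparison steps between real and integer moments.
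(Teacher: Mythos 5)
Your overall architecture (rescale by the self-similarity, reduce to a precise single-copy exponential asymptotic, match upper and lower bounds against \eqref{M} via \eqref{mscaling}) is the right skeleton, and your Minkowski-inequality reduction $\|u^\rho(t,x)\|_p\le\E_X\exp\big(\tfrac{p-\rho}{2}\Theta_t(X)\big)$ is correct under \eqref{Dalang'} and is in fact a clean alternative, for that case, to the paper's hypercontractivity step for non-integer $p$. The genuine gap is that the ``engine'' you invoke --- the limit $\lim_t t^{-\kappa}\log\E_X\exp(\vartheta\Theta_t(X))=(2\vartheta)^{\alpha/(\alpha-\beta)}\mathbf M(\alpha,\beta_0,d,\gamma)$ --- is precisely the main content of the paper, and it does not follow from any off-the-shelf Donsker--Varadhan or Feynman--Kac theorem: the available Feynman--Kac large deviation statement (Proposition \ref{prop-fk}) applies only to \emph{bounded continuous} space-time functions of the process wrapped onto a torus, whereas $\Theta_t$ is a quadratic functional of the occupation profile against the doubly singular kernel $|r-s|^{-\beta_0}\gamma(x-y)$. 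Bridging that distance is exactly what Steps 2--4 of Section \ref{section6.1} do (square-root factorization \eqref{timekernel}--\eqref{spacekernel}, truncation to $k_{A,a}$, $K_{B,b}$ with error control via Proposition \ref{bound-1}, periodization to $\mathbb T^d_M$, a compactness plus Hahn--Banach linearization to reduce the quadratic functional to linear ones where Proposition \ref{prop-fk} applies, and finally Lemma \ref{lemma7.3} to send $M\to\infty$ and recover $\mathbf M$). None of this appears in your proposal, so the upper bound is assumed rather than proved; the same remark applies to your $n$-body route for the Skorohod case, and your plan to ``reach non-integer $p$ by regularizing the kernel'' there is not an argument --- interpolation from integer moments gives the wrong constant, and the paper needs the Ornstein--Uhlenbeck hypercontractivity (Mehler formula) argument of Section 6 precisely because only \eqref{Dalang} is assumed.

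On the lower bound your path-confinement scheme also rests on unproved ingredients: you need a pathwise large deviation lower bound, at speed $t^\kappa$ after the space-time rescaling, for the occupation profile of the $\alpha$-stable process with rate $\int_0^1\mathcal E_\alpha(g_*(s,\cdot),g_*(s,\cdot))ds$, \emph{and} you need the singular interaction functionals $I_{jk}$ (and the conditional quantities $\overline\Theta_A$, $Q_A$ in your real-$p$ argument) to be continuous enough on the confinement event to be replaced by the $g_*$-interaction. Neither is standard for a pure-jump process with the kernel $|r-s|^{-\beta_0}\gamma(x-y)$, and the paper deliberately avoids this: it proves the Gaussian-duality variational inequality of Proposition \ref{prop3.1} (valid for all real $p\ge1$, and for $\rho=1$ under \eqref{Dalang}), which transfers the tilting to the noise variable, and then only ever applies the Feynman--Kac lower bound (Proposition \ref{fklb}) to the \emph{smooth} test functions $\widetilde{\cF}h$, recovering $\mathbf M$ by optimizing over $h$ and $g$. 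So while your outline identifies the correct constants and correctly locates the difficulty, the two halves of the proof that carry the paper's actual mathematical weight --- the linearization/compactification machinery for the upper bound and the duality-plus-smooth-Feynman--Kac mechanism for the lower bound --- are missing and cannot be supplied by citing a generic large deviation principle.
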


 We conclude this  introduction with some remarks on the motivation of our work and a brief literature review for the related results. The following three 
points motivate  us to obtain the above asymptotics. 
\begin{enumerate}
\item[(i)] The limit related to the long-term asymptotics 
is known as the {\it moment Lyapunov exponent}
in literature and the problem is closely related to the issue of {\it intermittency} (see, e.g., \cite{K14}). 
To illustrate our idea, write the limit in Theorem \ref{pa} in the following form:
$$
\lim_{t\to\infty}t^{-\frac{2\alpha-\beta-\alpha\beta_0}{\alpha-\beta}}\log
\E\exp\Big(p\log |u^{\rho}(t,x)|\Big)=\Lambda (p).
$$
The system satisfies the usual definition of intermittency, i.e.,
the function $\Lambda(p)/p$ is strictly increasing on $[2,\infty)$.
By the large deviation theory (see, e.g., Theorem 1.1.4 in \cite{Chen}
and its proof for the lower bound), for any sufficiently large $l>0$
$$
\lim_{t\to\infty}t^{-\frac{2\alpha-\beta-\alpha\beta_0}{\alpha-\beta}}\log
\P(A_{t,l})=-\sup_{p>0}\Big\{lp-\Lambda(p)\Big\}<0
$$
and there is $p_l>0$ such that
$$
\lim_{t\to\infty}{\E\big[|u^{\rho}(t,x)|^{p_l}1_{A_{t,l}}\big]\over
\E[|u^{\rho}(t,x)|^{p_l}]}=1
$$
where 
$$
A_{t,l}=\Big\{\log |u^{\rho}(t,x)|\ge l
t^{\frac{2\alpha-\beta-\alpha\beta_0}{\alpha-\beta}}\Big\}.
$$
This observation shows that  as in other cases of 
 intermittency, it is  rare for the solution $u(t,x)$  to take large values but that
the impact of taking large values should not be ignored.

 \item[(ii)] When the noise $\dot W$ is the space-time white noise  with dimension one in space, the parabolic Anderson model  \eqref{pam} is  the model for the  {\it continuum directed polymer in random environment} (see \cite{AKQ14} for the case $\alpha=2$ and \cite{CSZ} for the case $\alpha<2$), where \eqref{pam}  is understood in the Skorohod sense, the solution $\tilde u(t,x)$ is the  {\it partition function} for the polymer measure, and $\log \tilde u(t,x)$ is the  {\it free energy} for the polymer (see, e.g., \cite{Corwin}). Similarly,  if we consider  an  $\alpha$-stable motion
  $X$ in the random environment modelled by $\dot W$, one may consider the 
 Hamiltonian  
 \begin{equation} \label{hrho}
 H^\rho(t,x):=\int_0^t \int_{\RR^d} \de_0(X_{t-r}^x-y) W(dr,dy)
 -\frac{\rho}{2} \int_0^t\int_0^t |r-s|^{-\beta_0}\gamma(X_r-X_s) drds\,. 
 \end{equation}
  Then, $u^{\rho}(t,x)=\E_X[e^{H^\rho(t,x)}]$ is the partition function for the polymer measure, and $\log u^{\rho}(t,x)$ is the free energy for the polymer.
\item[(iii)]  The equation \eqref{pam}, as one of the basic SPDEs, describes a variety of models, such as the parabolic Anderson model (see, e.g. \cite{CM94}) and the model for continuum directed polymer in random environment (see, e.g., \cite{AKQ14}), in which the long-term asymptotic property of the solution is desirable.  In the recent publication \cite{CHHH},   the space-time fractional diffusion equation of the form 
\[
\left(\partial^\beta  + \frac{\nu}{2} (-\Delta)^{\alpha/2} \right) u(t,x)= \lambda u(t,x) \dot{W}(t,x)\,,
\]      
has been studied, where $\partial^\beta$ is the Caputo derivative in time $t$.  It is highly non-trivial to
obtain precise asymptotics in general case. Our model \eqref{pam}  corresponds to 
the case $\beta=1$,  and our result may provide some perspective for the general situation.
\end{enumerate} 
The moment Lyapunov exponent has been studied extensively
with vast  literature. To our best knowledge, however,
the investigation  in the setting of white/fractional space-time Gaussian noise started only recently, especially at the level of precision
given in this paper.  When  the driving
processes are Brownian motion instead of stable process, i.e., the operator in \eqref{pam} is $\frac12 \Delta $ instead of the fractional Laplacian,  the long-term asymptotic lower and upper bounds for the moments of the solution were studied in \cite{BC16} for the Skorohod solution and in \cite{Song12} for the Stratonovich solution; 
the {\em precise} moment Lyapunov exponents were obtained in recent publications \cite{Chen'', Chen'}   for the Skorohod solutions,  and \cite{CHSX} for the Stratonovich solution.  In \cite{BC14}, the authors obtained  the intermittency property for the fractional heat equation in the Skorohod sense, by studying the lower and upper asymptotic bounds of the solution.

In the present paper,  we aim to obtain the {\em precise} $p$-th moment Lyapunov exponents for both Stratonovich solution and Skorohod solution to the fractional heat equation in a unified way, for any real positive number $p\ge2$. The mathematical challenges and/or the originality of this work come from the following aspects. First, compared with case of the heat equation, the fact that the fractional Laplacian is not a local operator  makes  the computations and analysis more sophisticated.  New ideas and methodologies are required. In particular, 
Fourier analysis is involved in a more substantial way.    Second,   the Feynman-Kac large deviation result for stable process (Theorem \ref{prop-fk}) is a key to our approach. However, the method used to derive a similar result for Brownian motion in  \cite{CHSX} can no longer  be applied, as the behavior of stable process is totally different from the behavior of Brownian motion.  Third, we obtain the precise long-term asymptotics for $u^{\rho}(t,x)$ with $\rho\in[0,1]$, which enables us to get the precise moment Lyapunov exponents for the Stratonovich solution and the Skorohod solution simultaneously. Finally, the  existing results on precise Lyapunov exponents were mainly for $n$-th moment with $n$  a positive integer,  due to the fact that the Feynman-Kac type representation is valid only for the moment of integer orders. We are able to extend the result from positive integers to real numbers $p\ge2$. The idea is to use   the variational inequality and the hypercontractivity of the Ornstein-Uhlenbeck semigroup operators.

The paper is organized as follows. In Section 2,  we establish some rough bounds for the long-term asymptotics of the Stratonovich solution by comparison method. The rough bounds will be used in the derivation of the precise upper bound in Section 6. The critical exponential integrability of $\int_0^1\int_0^1 |r-s|^{-\beta_0}\gamma(X_r-X_s)drds$ is also studied. In Section 3,  we obtain an Feynman-Kac type large deviation result for $\alpha$-stable processes, which plays a critical role in obtaining the variational representation for the precise moment Lyapunov exponent. In Section 4, 
 we establish a lower bound for the $p$-th moment of $u^{\rho}(t,x)$ 
 which is also valid if the $\alpha$-stable process is replaced by some general symmetric
  L\'evy process. In Sections 5 and 6, we  validate  the lower  bound and the upper bound 
  in Theorem \ref{pa}, respectively. Finally,
   in Appendix, the well-posedness of the variation given in \eqref{M} which appears in Theorem \ref{pa} is justified, and the proof of a technical lemma that is used in Section 6 is provided.

\section{Asymptotic bounds by comparison method}
In this section we derive long-term asymptotic bounds by comparison method for $\log\E[u(t,x)]\\=\log\E\exp\left(\int_0^t\int_0^t|r-s|^{-\beta_0}\gamma(X_r-X_s)drds\right)$.  Note that by the  self-similarity property of the stable process $X$, the integral inside the above exponential has the following scaling property,
\begin{align}\label{scaling}
 &\int_0^{at}\int_0^{at} |r-s|^{-\beta_0}\gamma(X_r-X_s)drds\nonumber\\
 \overset{d}{=}& a^{2-\frac\beta\alpha-\beta_0}\int_0^t\int_0^t|r-s|^{-\beta_0}\gamma(X_r-X_s)drds,\, \text{ for all } a>0.
\end{align}
First,  we present the following integrability result. 
\begin{lemma}\label{integrability} 
$\displaystyle \E\int_0^t\int_0^t|r-s|^{-\beta_0}\gamma(X_r-X_s)drds<\infty $ if and only if $\alpha\beta_0+\beta<\alpha.$
\end{lemma}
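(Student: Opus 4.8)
The plan is to reduce the random double integral to a deterministic one by computing $\E[\gamma(X_r-X_s)]$ in closed form. First I would use that $X$ has stationary and symmetric increments to identify $X_r-X_s\overset{d}{=}X_{|r-s|}$, so that $\E[\gamma(X_r-X_s)]=\E[\gamma(X_{|r-s|})]$. Then the $\alpha$-self-similarity $X_u\overset{d}{=}u^{1/\alpha}X_1$ together with the homogeneity $\gamma(cx)=|c|^{-\beta}\gamma(x)$ recorded in \eqref{gammascaling} gives $\gamma(X_u)\overset{d}{=}u^{-\beta/\alpha}\gamma(X_1)$, hence
\[
\E[\gamma(X_r-X_s)]=\kappa\,|r-s|^{-\beta/\alpha},\qquad \kappa:=\E[\gamma(X_1)]=\int_{\R^d}\gamma(y)p_1(y)\,dy,
\]
where $p_1$ is the density of $X_1$.

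The one step that requires genuine work, and which I expect to be the main obstacle, is showing $0<\kappa<\infty$. Positivity is clear since $\gamma>0$ and $p_1>0$ on a set of positive measure. For finiteness I would split the integral at $|y|=1$: near the origin $p_1$ is bounded (the characteristic function of $X_1$ is integrable, being dominated by $e^{-c|\xi|^\alpha}$), and $\int_{\{|y|\le1\}}\gamma(y)\,dy<\infty$ exactly because of the standing assumptions $\beta<d$ in the power case and $\beta_j<1$ for all $j$ in the product case; on $\{|y|>1\}$ I would use the standard decay bounds on the symmetric $\alpha$-stable density (polynomial of order $d+\alpha$, or Gaussian when $\alpha=2$) together with a dyadic decomposition into shells $\{2^k\le|y|<2^{k+1}\}$ and the scaling of $\gamma$, which bounds the tail contribution by a constant times $\sum_{k\ge0}2^{-k(\alpha+\beta)}<\infty$. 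Thus $\kappa\in(0,\infty)$.

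Finally, since the integrand is nonnegative, Tonelli's theorem gives
\[
\E\int_0^t\int_0^t|r-s|^{-\beta_0}\gamma(X_r-X_s)\,dr\,ds=\kappa\int_0^t\int_0^t|r-s|^{-\beta_0-\beta/\alpha}\,dr\,ds,
\]
and an elementary computation shows that $\int_0^t\int_0^t|r-s|^{-\theta}\,dr\,ds$ is finite if and only if $\theta<1$. Since $\kappa\in(0,\infty)$, the left-hand side is finite if and only if $\beta_0+\beta/\alpha<1$, that is, $\alpha\beta_0+\beta<\alpha$. Apart from the bound $\kappa<\infty$, which hinges on two-sided control of the $\alpha$-stable transition density, everything is scaling together with Fubini--Tonelli.
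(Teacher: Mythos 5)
Your argument is exactly the paper's proof: reduce to $\E[\gamma(X_r-X_s)]=|r-s|^{-\beta/\alpha}\,\E[\gamma(X_1)]$ by self-similarity and the scaling of $\gamma$, then apply Tonelli and the elementary criterion for $\int_0^t\int_0^t|r-s|^{-\theta}drds<\infty$. The only difference is that you spell out the verification of $0<\E[\gamma(X_1)]<\infty$ (bounded density near the origin, dyadic shells in the tail), which the paper simply asserts; your verification is correct, including in the product case where local integrability of $\gamma$ follows from $\beta_j<1$.
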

\begin{proof}
Using the self-similarity of $X$, and the scaling property of $\gamma(x)$, we have
$\E[\gamma(X_r-X_s)]=|r-s|^{-\frac\beta\alpha} \E[\gamma(X_1)],$ noting that $0<\E[\gamma(X_1)]<\infty,$ under the condition of this lemma. Hence,  we have
$$\E\int_0^t\int_0^t|r-s|^{-\beta_0}\gamma(X_r-X_s)drds =\E[\gamma(X_1)]\int_0^t\int_0^t|r-s|^{-\beta_0}|r-s|^{-\frac
\beta\alpha}drds,$$
which concludes the proof.\hfill\end{proof}

\begin{lemma}\label{continuity}  \rm{Under the condition (\ref{Dalang'})},  the process  
 \[
 Y_t=\int_0^t \int_0^ t |r-u|^{-\beta_0}\gamma(X_r-X_u) drdu,\quad  t\ge 0
\]
 has a continuous version.  
\end{lemma}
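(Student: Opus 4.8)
The plan is to avoid any Kolmogorov/chaining argument and instead observe that, under \eqref{Dalang'}, the space-time kernel $|r-u|^{-\beta_0}\gamma(X_r-X_u)$ is, almost surely, locally integrable along the paths of $X$; once this is known, the continuity of $t\mapsto Y_t$ is a one-line application of dominated convergence, and the desired continuous version is obtained by redefining $Y$ on a single null set.

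First I would invoke Lemma~\ref{integrability}: under \eqref{Dalang'} one has $\E[Y_t]<\infty$ for every $t\ge0$, hence $Y_t<\infty$ a.s.\ for each fixed $t$. Since the integrand is nonnegative, $t\mapsto Y_t$ is nondecreasing, so on the full-probability event $\Omega_0:=\bigcap_{n\in\bN}\{Y_n<\infty\}$ one has $Y_t(\omega)<\infty$ for \emph{all} $t\ge0$ at once; equivalently, for every $\omega\in\Omega_0$ and every $T>0$ the function $g_\omega(r,u):=|r-u|^{-\beta_0}\gamma\big(X_r(\omega)-X_u(\omega)\big)$ lies in $L^1([0,T]^2)$. (Joint measurability of $(r,u,\omega)\mapsto g_\omega(r,u)$, and hence of $(t,\omega)\mapsto Y_t(\omega)$, follows from the c\`adl\`ag property of $X$, and I would simply record this.)

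Next, fix $\omega\in\Omega_0$ and $t_0\ge0$, and set $T:=t_0+1$. Writing $Y_t(\omega)=\int_{\R_+^2} g_\omega(r,u)\,\mathbf 1_{\{r\le t\}}\mathbf 1_{\{u\le t\}}\,dr\,du$, as $t\to t_0$ the integrand converges to $g_\omega(r,u)\,\mathbf 1_{\{r\le t_0\}}\mathbf 1_{\{u\le t_0\}}$ for Lebesgue-a.e.\ $(r,u)$, and for $t\le T$ it is dominated by $\mathbf 1_{[0,T]^2}\,g_\omega\in L^1$; dominated convergence yields $Y_t(\omega)\to Y_{t_0}(\omega)$. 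Thus $t\mapsto Y_t(\omega)$ is continuous on $[0,\infty)$ for every $\omega\in\Omega_0$, and the process $\widetilde Y$ defined by $\widetilde Y_t:=Y_t$ on $\Omega_0$ and $\widetilde Y_t:=0$ off $\Omega_0$ is a continuous modification of $Y$ (still adapted, after the usual completion of the filtration).

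There is no genuine analytic obstacle: condition \eqref{Dalang'} is precisely what makes the kernel a.s.\ locally $L^1$ along the paths of $X$, which is exactly the content of Lemma~\ref{integrability}, and then continuity is automatic. The only points needing a little care are the measure-theoretic bookkeeping --- joint measurability, arranging a.s.\ finiteness simultaneously for all $t$ (handled above via monotonicity), and the measurability and adaptedness of the modified process. One could instead run Kolmogorov's continuity criterion, estimating $\E|Y_t-Y_s|^m$ by localizing the ``$L$''-shaped domain $[0,t]^2\setminus[0,s]^2$, using convexity in one variable to extract a factor $(t-s)^m$, and bounding the leftover moment via Theorem~\ref{thm-exponential}; but that route is much heavier than what the statement requires.
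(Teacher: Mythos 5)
Your argument is correct, but it is a genuinely different route from the paper's. The paper proves Lemma~\ref{continuity} via Kolmogorov's continuity criterion: it splits the increment $Y_t-Y_s$ over the three rectangles making up $[0,t]^2\setminus[0,s]^2$, uses the scaling identity $\|\gamma(X_r-X_u)\|_p=c_p|r-u|^{-\beta/\alpha}$ for some $1<p<\alpha/\beta$, and obtains $\E|Y_t-Y_s|^p\le C_T|t-s|^p$, which yields a (H\"older) continuous version. You instead exploit that the integrand is nonnegative, so $t\mapsto Y_t$ is nondecreasing; Lemma~\ref{integrability} (which is exactly where \eqref{Dalang'} enters) gives $Y_n<\infty$ a.s.\ for every $n$, hence on a single full event the kernel $|r-u|^{-\beta_0}\gamma(X_r-X_u)$ is locally in $L^1$ along the path, and dominated convergence (the boundary lines $\{r=t_0\}\cup\{u=t_0\}$ being Lebesgue-null) gives pathwise continuity of $t\mapsto Y_t$ directly; redefining on the exceptional null set produces the continuous modification, which is in fact indistinguishable from $Y$. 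Your soft argument needs only the first-moment bound of Lemma~\ref{integrability} and no higher moments of $\gamma(X_1)$, and it shows the original process itself is a.s.\ continuous, which is slightly stronger than the bare statement; the paper's heavier route buys quantitative moment estimates on increments ($L^p$-Lipschitz bounds locally in $t$) that can be reused elsewhere. The only points you must (and do) attend to are joint measurability of the integrand along the c\`adl\`ag paths and arranging finiteness simultaneously for all $t$ via monotonicity, so I see no gap.
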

\begin{proof} We shall use the notation $\|F\|_p=\left(\E[|F|^p]\right)^{1/p}$.  
For any $0\le s<t\le \infty$,  we have for any $p\ge 1$, 
\begin{eqnarray*}
\|Y_t-Y_s\|_p 
&\le& \int_s^t \int_0^ s |r-u|^{-\beta_0}\|\gamma(X_r-X_u)\|_p drdu+
\int_0^s \int_s^ t |r-u|^{-\beta_0}\|\gamma(X_r-X_u)\|_p drdu\\
&&\qquad +\int_s^t \int_s^ t |r-u|^{-\beta_0}\|\gamma(X_r-X_u)\|_p drdu\\
&=:& I_1+I_2+I_3\,. 
\end{eqnarray*}
By scaling property,  when $1<p<\frac\alpha\beta$,  
\[
\|\gamma(X_r-X_u)\|_p=\left(\EE\left[ |X_r-X_u|^{-\frac\beta\alpha p}\right]\right)^{1/p}
=c_p|r-u|^{-\frac\beta\alpha}\,.
\]
Thus,
\begin{eqnarray*}
I_1
&\le&  \int_s^t \int_0^ s |r-u|^{-\beta_0-\frac\beta\alpha} dudr\\
&\le&  \frac{1}{  1-\beta_0-\frac\beta\alpha} 
 \int_s^t  r^{ 1-\beta_0-\frac\beta\alpha}  dr\\
 &\le&  C
 \int_s^t  t^{ 1-\beta_0-\frac\beta\alpha}  dr
 =C t ^ {1-\beta_0-\frac\beta\alpha} |t-s|\,. 
\end{eqnarray*} 
This means 
\[
I_1^p\le C  t ^ {p(1-\beta_0-\frac\beta\alpha)} |t-s|^p\,.
\]
 Similar estimates for $I_2$ and $I_3$ can also be obtained. Thus  for $0\le s,t\le T,$ there is a constant $C_T$ depending only on $(\alpha, \beta, \beta_0, T)$ such that
\[
\EE \left| Y_t-Y_s\right|^p=\|Y_t-Y_s\|_p ^p\le C_T |t-s|^p\,.
\]
It follows from Kolmogorov continuity criterion that $\left\{ Y_t\,, t\ge 0\right\}$ has a continuous version. 

\hfill
\end{proof}

\begin{theorem}\label{thm-exponential}
 Under the condition (\ref{Dalang'}), there exists a constant $\delta >0$ such that when $\theta \in(0,\delta)$, 
\begin{equation}\label{exponential'}
\E\exp\left(\theta\left(\int_0^1\!\!\int_0^1\vert
r-s\vert^{-\beta_0}\gamma( X_r-X_s)drds\right)^{\frac{\alpha}{\alpha\beta_0+\beta}}\right)<\infty,
\end{equation}
and consequently, for all $\lambda >0$,
\begin{align}\label{exponential}
\E\exp\bigg(\lambda\int_0^t\!\!\int_0^t\vert
r-s\vert^{-\beta_0}\gamma( X_r-X_s)drds\bigg)<\infty.
\end{align}
\end{theorem}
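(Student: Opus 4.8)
The plan is to prove the sub-exponential integrability \eqref{exponential'} first, and then deduce the full exponential integrability \eqref{exponential} by a scaling-plus-subdivision argument. For \eqref{exponential'}, set $A=\int_0^1\int_0^1|r-s|^{-\beta_0}\gamma(X_r-X_s)\,drds$ and $q=\frac{\alpha}{\alpha\beta_0+\beta}$; note $q>1$ precisely because of \eqref{Dalang'}. The natural route is via moment estimates: it suffices to show that the moments $\E[A^m]$ grow no faster than $(Cm^{1/q})^m$ (up to subexponential factors), i.e. $\E[A^m]\le C^m (m!)^{1/q}$, since then $\sum_m \frac{\theta^m}{m!}\E[A^{mq}]$ — after writing the left side of \eqref{exponential'} as $\sum_m \frac{\theta^m}{m!}\E[A^{mq}]$ — converges for $\theta$ small. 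Equivalently, expanding $\E[A^m]$ as an $m$-fold integral,
\[
\E[A^m]=\int_{[0,1]^{2m}}\prod_{i=1}^m |r_i-s_i|^{-\beta_0}\;\E\Big[\prod_{i=1}^m \gamma(X_{r_i}-X_{s_i})\Big]\,dr\,ds,
\]
and one estimates the stable-process expectation. The key analytic input is a bound of the form $\E\big[\prod_i \gamma(X_{r_i}-X_{s_i})\big]\le C^m\prod_i |r_i-s_i|^{-\beta/\alpha}$ after a suitable reordering of the time points $\{r_i,s_i\}$ into increasing order $0\le t_1\le\cdots\le t_{2m}\le 1$ and using the Markov property together with the scaling $\E[\gamma(X_u)]=c|u|^{-\beta/\alpha}$ on the consecutive increments; this is a standard chaining estimate for $\alpha$-stable occupation-type functionals. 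Combining this with the deterministic integral $\int_{[0,1]^{2m}}\prod_i |r_i-s_i|^{-\beta_0-\beta/\alpha}\,dr\,ds\le (C/( \alpha\beta_0+\beta))^m$ and the combinatorial factor from sorting $2m$ points ($\le (2m)!\le C^m (m!)^2$, which is absorbed once one is careful, since the genuinely singular contribution concentrates on "nested" configurations) yields the desired $\E[A^m]\le C^m (m!)^{1/q}$. An alternative, cleaner packaging is to invoke a known sub-Gaussian/sub-exponential concentration bound for such functionals (e.g. of the type in \cite[Theorem 3.3]{Song}), which the paper itself points to; if we are willing to cite that, the proof of \eqref{exponential'} is immediate with $\delta$ the reciprocal of the relevant constant.

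For the passage from \eqref{exponential'} to \eqref{exponential}, I would use the scaling identity \eqref{scaling}: for any $a>0$,
\[
\int_0^{a}\int_0^{a}|r-s|^{-\beta_0}\gamma(X_r-X_s)\,drds \;\overset{d}{=}\; a^{2-\frac{\beta}{\alpha}-\beta_0}\,A.
\]
Fix $\lambda>0$ and $t>0$. Choose an integer $N$ large and partition $[0,t]$ into $N$ subintervals $I_k=[(k-1)t/N,kt/N]$. Bounding the double integral over $[0,t]^2$ by the sum over the $N^2$ blocks $I_k\times I_\ell$, and then crudely by $N$ times the sum of the $N$ diagonal-type contributions plus cross terms — more efficiently, use $|r-s|^{-\beta_0}\le$ (block-diagonal part) and the elementary inequality $\exp(\sum_{k,\ell} a_{k\ell})\le \frac1{N^2}\sum_{k,\ell}\exp(N^2 a_{k\ell})$ after Jensen — one reduces to controlling $\E\exp\big(\lambda N^2 \int_{I_k}\int_{I_\ell}|r-s|^{-\beta_0}\gamma(X_r-X_s)\,drds\big)$. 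By stationarity of increments of $X$ and the scaling above, each such term is $\E\exp\big(\lambda N^2 (t/N)^{2-\beta/\alpha-\beta_0} A'\big)$ for a copy $A'$ of $A$ (for the diagonal blocks; off-diagonal blocks are easier since $|r-s|^{-\beta_0}$ is then bounded). The exponent of $N$ is $2-(2-\beta/\alpha-\beta_0)=\beta/\alpha+\beta_0>0$ in general, so this does not immediately shrink; instead one should not split by Jensen but rather use Hölder across the finitely many blocks and exploit that on each block the rescaled coefficient $\lambda (t/N)^{2-\beta/\alpha-\beta_0}$ can be made arbitrarily small by taking $N$ large, while the power $q>1$ in \eqref{exponential'} means $\E\exp(\epsilon A)<\infty$ for every $\epsilon>0$ once $\epsilon A\le \epsilon^{q'} + (\epsilon A)^q/q\cdot(\dots)$ — more simply, \eqref{exponential'} with $q>1$ gives, by Young's inequality $\epsilon a \le C_\epsilon + \delta' a^{q}$ valid for any $\delta'>0$ with an appropriate $C_\epsilon$, that $\E\exp(\epsilon A)<\infty$ for all $\epsilon>0$. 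Hence $\E\exp(\mu A)<\infty$ for every $\mu>0$, and the block decomposition plus Hölder's inequality over the $N^2$ blocks finishes \eqref{exponential}.

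The main obstacle I anticipate is the moment bound $\E\big[\prod_{i=1}^m \gamma(X_{r_i}-X_{s_i})\big]\le C^m\prod_i|r_i-s_i|^{-\beta/\alpha}$ with a constant $C$ independent of the configuration: naively applying Cauchy–Schwarz or the Markov property pairwise loses too much, and for the non-factorized kernel $\gamma(x)=\prod_j|x_j|^{-\beta_j}$ one must handle each coordinate's $\alpha$-stable (one-dimensional marginal, which is not stable) increments. The clean way is a sorting argument: order all $2m$ times, use the independence of increments of $X$ over disjoint intervals, and for each $\gamma(X_{r_i}-X_{s_i})$ bound it using the sub-multiplicative-type estimate $\E_u[\gamma(X_u+x)]\le c|u|^{-\beta/\alpha}$ uniformly in the shift $x$ (this uniformity is exactly where symmetry and unimodality of the stable density, or a convexity/rearrangement argument, enters). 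Once that uniform-in-$x$ bound is in hand, a telescoping/chaining over the sorted times delivers the estimate, and the rest is bookkeeping with Gamma-function asymptotics to recover the exponent $\frac{\alpha}{\alpha\beta_0+\beta}$. If instead we simply cite \cite[Theorem 3.3]{Song} (or Lemma \ref{integrability} combined with a Borell–type inequality), this obstacle disappears and the proof is short; I would present the self-contained moment argument but remark that the cited result gives an alternative.
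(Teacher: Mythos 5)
Your reduction of \eqref{exponential} to \eqref{exponential'} is fine: since $\frac{\alpha}{\alpha\beta_0+\beta}>1$ under \eqref{Dalang'}, Young's inequality gives $\E\exp(\mu A)<\infty$ for every $\mu>0$, and the scaling \eqref{scaling} then yields \eqref{exponential} directly (the block/H\"older machinery you sketch is unnecessary); this is exactly how the paper concludes. The problem is the core of your argument for \eqref{exponential'}. The ``key analytic input'' you propose, namely
$\E\bigl[\prod_{i=1}^m \gamma(X_{r_i}-X_{s_i})\bigr]\le C^m\prod_{i=1}^m|r_i-s_i|^{-\beta/\alpha}$ with $C$ independent of the configuration, is false. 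Take the Riesz kernel $\gamma(x)=|x|^{-\beta}$ and let all the $r_i$ cluster near one point and all the $s_i$ near another: then the left-hand side is comparable to $\E\bigl[|X_r-X_s|^{-m\beta}\bigr]$, which is infinite as soon as $m\beta\ge d$, while the right-hand side stays finite. Moreover, if your bound were true it would give $\E[A^m]\le C^m$ for all $m$, hence $\E\exp(\theta A^{\eta})<\infty$ for every $\eta>1$ and every $\theta$, contradicting the critical exponent $\alpha/\beta$ established in Theorem \ref{thm-exponential'} (\eqref{eq.cr.4}). The correct moment growth is $\E[A^m]\asymp C^m(m!)^{\beta_0+\beta/\alpha}$, and the factorial power comes precisely from the clustered configurations that your ``sorting plus uniform-in-shift increment bound'' and the remark that the combinatorial factor ``is absorbed once one is careful'' do not control; producing that bound is the substantive work (it is essentially \cite[Theorem 3.3]{Song}), so your self-contained moment argument does not close.

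Your fallback of citing \cite[Theorem 3.3]{Song} also does not settle the theorem as stated: as Remark \ref{remark-exponential} before the proof points out, that result yields \eqref{exponential} (exponential integrability of $A$ itself), whereas \eqref{exponential'} with the power $\frac{\alpha}{\alpha\beta_0+\beta}$ is a strictly stronger statement, and it is exactly to obtain it that the paper abandons the moment method. For comparison, the paper's route is entirely different: using the square-root kernels \eqref{timekernel} and \eqref{spacekernel} it writes $Z_t=\bigl(\int_0^t\int_0^t|r-s|^{-\beta_0}\gamma(X_r-X_s)\,drds\bigr)^{1/2}$ as the $L^2(\R^{d+1})$-norm of an additive functional, so that the triangle inequality plus independence of increments make $Z_t$ sub-additive; then \cite[Theorem 1.3.5]{Chen} gives $\lim_t t^{-1}\log\E\exp(\theta Z_t)=\Psi(\theta)$, the scaling $Z_{at}\overset{d}{=}a^{\kappa}Z_t$ with $\kappa=1-\frac{\beta}{2\alpha}-\frac{\beta_0}{2}$ forces $\Psi(\theta)=\theta^{1/\kappa}\Psi(1)$, and a Chebyshev argument turns this into an exponential tail bound for $\P(Z_1\ge t^{1-\kappa})$, which upon integration gives \eqref{exponential'} with $\frac1{2(1-\kappa)}=\frac{\alpha}{\alpha\beta_0+\beta}$. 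If you want a moment-based proof you would have to reprove the $(m!)^{\beta_0+\beta/\alpha}$ moment bound in full; as written, your proposal has a genuine gap at its central estimate.
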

\begin{remark}
\rm{The inequality \eqref{exponential} is a consequence of Theorem 3.3 in \cite{Song} which was proved by using moment method.  Below we will provide another approach to prove \eqref{exponential} by using the techniques from the theory of large deviations,   and it turns out that this approach enables us to get a stronger result (see Remark \ref{remark-critical-exponent}).}
\end{remark}

\begin{proof} Denote 
\begin{equation}\label{eqzt}
Z_t=\left(\int_{0}^{t}\int_{0}^{t}|s-r|^{-\beta_0}\gamma(X_s-X_r)dsdr\right)^\frac{1}{2}\,,\text{ for }\,  t\ge 0\,.
\end{equation}
First we shall show that $Z_t$ is sub-additive and hence exponentially integrable by \cite[theorem 1.3.5]{Chen}.

The following  identity holds
\begin{equation}\label{timekernel}
\vert s-r\vert^{-\beta_0}=C_0\int_{\R}\vert s-u\vert^{-\frac{\beta_0+1}{2}}
\vert r-u\vert^{-\frac{\beta_0+1}{2}}du,
\end{equation}
where $C_0>0$ depends on $\beta_0$ only.
Similarly, for the function $\gamma(x)$ we also have
\begin{align}\label{spacekernel}
\gamma (x)=C(\gamma)\int_{\R^d}K(y-x)K(y)dy,\hskip.2in x\in\R^d\,, 
\end{align}
where $C(\gamma)>0$ is a constant and
\begin{eqnarray}\label{bound-9}
K(x)=\left\{\begin{array}{ll} \displaystyle\prod_{j=1}^d\vert
x_j\vert^{-{1+\beta_j\over 2}}&\qquad
\hbox{if  $\gamma(x)=\prod_{j=1}^d\vert x_j\vert^{-{\beta_j}}$},\\
\vert x\vert^{-{d+\beta\over 2}}&\qquad  
\hbox{if  $\gamma(x)= \vert x\vert^{-{\beta }}$}.
 \end{array}\right.
\end{eqnarray}
With these identities, we can rewrite $Z_t$ as 
$$
Z_t=\bigg(\int_{\R\times\R^d}\xi_t^2(u,x)
du dx\bigg)^{1/2}\,, 
$$
where
$$
\xi_t(u, x)=C_0C(\gamma)
\int_0^t\vert s-u\vert^{-{\beta_0+1\over 2}}K(X_s-x)ds\,. 
$$
For $t_1, t_2>0$, by the  triangular inequality
$$
Z_{t_1+t_2}\le Z_{t_1}+\bigg(\int_{\R\times\R^d}
\Big[\xi_{t_1+t_2}(u, x)-\xi_{t_1}(u, x)\Big]^2
du dx\bigg)^{1/2}\,. 
$$
Let $\widetilde{X}_s=X_{t_1+s}-X_{t_1}$, which is independent of $\{X_r,0\le r\le t_1\},$ and we have
\begin{align*}
&\xi_{t_1+t_2}(u, x)-\xi_{t_1}(u, x)\cr
=&C_0C(\gamma)\int_{t_1}^{t_1+t_2}
\vert s-u\vert^{-{\beta_0+1\over 2}}K(X_s-x)ds\cr
=&C_0C(\gamma)\int_{0}^{t_2}
\vert s+t_1-u\vert^{-{\beta_0+1\over 2}}K(\widetilde{X}_{s}+X_{t_1}-x)ds\,. 
\end{align*}
The translation invariance of the integral on $\R^{d+1}$ implies that
\begin{align*}
&\int_{\R\times\R^d}
\Big[\xi_{t_1+t_2}(u, x)-\xi_{t_1}(u, x)\Big]^2dudx 
= \int_{\R\times\R^d}\left[\widetilde{\xi}_{t_2}(u, x)\right]^2
du dx\,, 
\end{align*}
where
$$
\widetilde{\xi}_{t_2}(u, x)
=C_0C(\gamma)\int_{0}^{t_2}
\vert s-u\vert^{-{\beta_0+1\over2}}K(\widetilde X_s-x)ds.
$$
Therefore, the process $Z_t$ is sub-additive, which means that for any
$t_1,t_2>0$, $Z_{t_1+t_2}\le Z_{t_1}+\widetilde{Z}_{t_2}$, where
$\widetilde{Z}_{t_2}$ is independent of $\{Z_s, 0\le s\le t_1\}$
and has the same distribution as $Z_{t_2}$.

Notice that $Z_t$ is non-negative, non-decreasing, and  pathwise continuous by Lemma  \ref{continuity}. Thus it follows from \cite[Theorem 1.3.5]{Chen} that, for any  $t>0$ and  $\theta>0$
$$
\E\exp\left[\big( \theta Z_t\big)\right]<\infty,
$$
and 
\begin{equation}\label{subadditivity}
\lim_{t\to\infty}{1\over t}\log \E\left[\exp\big(\theta Z_t\big)\right]=\Psi(\theta),
\end{equation}
for some $\Psi(\theta)\in[0, \infty)$. Moreover, by the scaling property \eref{scaling} we have $Z_{at} \overset{d}{=}a^{\kappa} Z_t $ with $\kappa=1-\frac{\beta}{2\alpha}-\frac{\beta_0}{2}\in (1/2, 1)$, and hence for all $\theta>0$,
\begin{equation}\label{subadditivity'}\Psi(\theta)=
\lim_{t\to\infty}{1\over t}\log \E\exp\left[\big(\theta Z_t\big)\right]=\lim_{t\to\infty}{1\over t}\log \E\left[\exp\big( Z_{t\theta^{\frac1\kappa}} \big)\right]= \theta^{\frac1\kappa}\Psi(1).
\end{equation}
Chebyshev inequality implies that 
\[
 \exp(\theta t) \P(Z_t\ge t)\le \E\exp(\theta Z_t)
 \quad\hbox{and then}\quad 
 \theta t+\log \P(Z_t\ge t)\le \log \E\exp(\theta Z_t)\,.
 \]
 Taking the  limit yields, for any $\theta>0$,
\[\limsup_{t\to \infty} \frac1t \log\P(Z_t\ge t)\le \lim_{t\to \infty}\frac1t \log \E\left[\exp(\theta Z_t)\right]-\theta=\theta^{\frac1\kappa}\Psi(1)-\theta.\]
Therefore
\begin{equation}\label{eq2.9'}
\limsup_{t\to \infty} \frac1t \log\P(Z_t\ge t)\le \inf_{\theta\in(0,1)}\{\theta^{\frac1\kappa}\Psi(1)-\theta\},\end{equation}
where the term on the right-hand side is strictly negative noting that $1/\kappa\in(1,2)$ and $\Psi(1)\ge 0$, and is denoted by $-a$ for some $a>0$. Hence there exists a constant $ T>0$ such that when $t\ge T$, 
\begin{equation}\label{eq2.10'}
\P(Z_1\ge t^{1-\kappa})=\P(Z_t\ge t)\le \exp\left(-  at/2 \right).
\end{equation}
Consequently, 
\begin{eqnarray*}
 \E\left[\exp(\theta Z_1^{\frac1{1-\kappa}})\right]
 &=&\int_0^\infty \P(\theta Z_1^{\frac1{1-\kappa}}\ge y) e^y  dy +1\\
&\le&  \int_0^T e^y dy +\int_T^\infty e^{- a\theta^{-1}y/2} e^y  dy+1,
\end{eqnarray*}
where the last term is finite if $\theta \in (0, a/2)$. This implies \eref{exponential'}. 

Finally \eref{exponential} is obtained by  \eref{exponential'}, the scaling property \eref{scaling} and the fact that the condition \eqref{Dalang'} implies  $  \frac{\alpha}{\alpha \beta_0+\beta}>1$.
\end{proof}

\begin{remark}\label{remark-exponential}  \rm{Note that by\eref{subadditivity'}, $\Psi(\theta)=\theta^{\frac1\kappa}\Psi(1)$ with $\Psi(1)\in[0, \infty)$. Actually, $\Psi(1)> 0$ when $\beta_0=0$, by \eqref{bound-6} in the proof of Lemma \ref{bound-4}. However, when $\beta_0\in(0,1)$, $\Psi(1)$ must be 0,  which means that the asymptotics given by \eqref{subadditivity} is not optimal. Indeed, if $\Psi(1)\neq 0$, G\"artner-Ellis theorem for non-negative random variable (\cite[Corollary 1.2.5]{Chen})  and equation \eref{subadditivity'} imply that for $\lambda>0$,
\[
\lim_{t\to\infty}\frac1t \log\P(Z_t^2\ge\lambda t^{2-2\kappa})=-\sup_{\theta>0}\big\{\theta \lambda^\frac12-\theta^{\frac1\kappa}\Psi(1)\big\}= C_1 \lambda^{\frac{1}{2-2\kappa}},\
\]
where $C_1= \Psi(1)^{\frac{\kappa}{\kappa-1}}(\kappa^{\frac{\kappa}{1-\kappa}}-\kappa^{\frac{1}{1-\kappa}})$. Note that the assumption $\Psi(1)>0$ guarantees that $\theta^{\frac1\kappa}\Psi(1)$ is an essentially smooth function (\cite[Definition 1.2.3]{Chen}), and hence the G\"artner-Ellis theorem can be applied. Then, by the Varadhan's integral lemma (\cite[Theorem 1.1.6]{Chen} or \cite[Section 4.3]{DZ}), 
\[
\lim_{t\to\infty} \frac1t\log \E\exp(\theta t^{2\kappa -1}Z_t^2)=\sup_{\lambda>0}\big\{\lambda \theta -C_1 \lambda^{\frac1{2-2\kappa}}\big\}=C_2 \theta^{\frac1{2\kappa-1}}\,,
\]
where $C_2$ is a positive constant depending on $C_1$ and $\kappa$. By the scaling property \eref{scaling}, this limit is equal to
\[
\lim_{t\to \infty}t^{-1}\log\E\left[\exp(\theta Z_{t^{\eta}}^2)\right]=\lim_{t\to \infty} 
t^{-\frac1\eta}\log\E\left[\exp(\theta Z_{t}^2)\right]=C_2 \theta^{\frac{1}{2\kappa-1}} \,,
\]
where $\eta=\frac{2\kappa-1}{2\kappa}$ and $\frac1\eta=\frac{2\alpha-\beta-\alpha\beta_0}{\alpha-\beta-\alpha\beta_0}$. This contradicts with Proposition \ref{bound-1} when $\beta_0\in(0,1)$.  }
\end{remark}

\begin{remark}\label{remark-critical-exponent}\rm{
 We observe that the restriction $\theta\in (0,\delta)$ for \eqref{exponential'} in Theorem \ref{thm-exponential} can be removed when $\beta_0\in(0,1)$. Indeed, the inequality \eqref{eq2.9'} in the proof can be replaced by
\begin{equation*}
\limsup_{t\to \infty} \frac1t \log\P(Z_t\ge t)\le \inf_{\theta>0}\{\theta^{\frac1\kappa}\Psi(1)-\theta\}.\end{equation*}
Noting that by Remark \ref{remark-exponential}, $\Psi(1)=0$ when $\beta_0\in(0,1)$, we have 
$$\limsup_{t\to \infty} \frac1t \log\P(Z_t\ge t)= -\infty. $$
This enables us to choose any positive number for $a$ in \eqref{eq2.10'}, and hence \eqref{exponential'} holds for any $\theta>0$. Moreover, using Theorem \ref{pa} (note that Theorem \ref{pa} is proved without quoting Theorem \ref{thm-exponential'}), the critical exponential integrability and the corresponding critical exponent for $\int_0^1\!\!\int_0^1\vert
r-s\vert^{-\beta_0}\gamma( X_r-X_s)drds $ can be obtained. 
\begin{theorem}\label{thm-exponential'}
Let $C_0:=C\big(\alpha, \beta,\beta_0, d,\gamma(\cdot)\big)$ be given in \eqref{eq.critical}.
Then under the condition \eqref{Dalang'}, we have
\begin{equation}\label{eq.cr.1}
\E\exp\left(\theta\left(\int_0^1\!\!\int_0^1\vert
r-s\vert^{-\beta_0}\gamma( X_r-X_s)drds\right)^{\frac{\alpha}{\beta}}\right)<\infty, \text{ for any $\theta<C_0$, }
\end{equation}
and 
\begin{equation}\label{eq.cr.2}
\E\exp\left(\theta\left(\int_0^1\!\!\int_0^1\vert
r-s\vert^{-\beta_0}\gamma( X_r-X_s)drds\right)^{\frac{\alpha}{\beta}}\right)=\infty, \text{ for any $\theta>C_0$.}
 \end{equation}
 Furthermore, 
 \begin{equation}\label{eq.cr.3}
\E\exp\left(\theta\left(\int_0^1\!\!\int_0^1\vert
r-s\vert^{-\beta_0}\gamma( X_r-X_s)drds\right)^{\eta}\right)<\infty, \text{ for any $\theta>0$ and $\eta<\frac\alpha\beta$.}
\end{equation}
and 
 \begin{equation}\label{eq.cr.4}
\E\exp\left(\theta\left(\int_0^1\!\!\int_0^1\vert
r-s\vert^{-\beta_0}\gamma( X_r-X_s)drds\right)^{\eta}\right)=\infty, \text{ for any $\theta>0$ and $\eta>\frac\alpha\beta$.}
\end{equation}

\end{theorem}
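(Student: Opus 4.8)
Write $A_t:=\int_0^t\!\int_0^t|r-s|^{-\beta_0}\gamma(X_r-X_s)\,dr\,ds$, so that the object under study is $A_1$; we may assume $\beta>0$, since for $\beta=0$ we have $\gamma\equiv1$, $A_1$ is deterministic, and all four assertions are trivial. Set $\Theta:=\alpha/(\alpha-\beta)$ and $\eta_0:=\alpha/\beta$. Under \eqref{Dalang'} one has $\beta<\alpha$, hence $\Theta,\eta_0\in(1,\infty)$ and $\tfrac1\Theta+\tfrac1{\eta_0}=1$, i.e. $\Theta$ and $\eta_0$ are conjugate H\"older exponents — this is the arithmetic fact that makes $\alpha/\beta$ the critical exponent in \eqref{eq.cr.1}--\eqref{eq.cr.4}. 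The whole argument rests on the sharp logarithmic asymptotics of the moment generating function of $A_1$,
\begin{equation}\label{eq.star}
\lim_{s\to\infty}s^{-\Theta}\log\E\!\left[e^{sA_1}\right]=2^{\Theta}\,\mathbf M(\alpha,\beta_0,d,\gamma)=:c\in(0,\infty),
\end{equation}
from which the constant $C_0$ of \eqref{eq.critical} appears as the Legendre conjugate of $s\mapsto cs^{\Theta}$, namely $C_0=\eta_0^{-1}(c\Theta)^{1-\eta_0}$.

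To establish \eqref{eq.star} I would invoke Theorem~\ref{pa} with $p=1$, $\rho=0$ and $u_0\equiv1$, so that $u^0(t,x)=u(t,x)$ is the (nonnegative) Stratonovich solution and $\|u^0(t,x)\|_1=\E[u(t,x)]$. By \eqref{momstr} with $n=1$ — equivalently, by conditioning the Feynman--Kac representation \eqref{feynman} on $X$ and computing the conditional Gaussian Laplace transform, whose variance equals $\int_0^t\!\int_0^t|r-s|^{-\beta_0}\gamma(X_{t-r}^x-X_{t-s}^x)\,dr\,ds=A_t$ after the change of variables $r\mapsto t-r$, $s\mapsto t-s$ — this equals $\E_X[e^{\frac12A_t}]$. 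Hence Theorem~\ref{pa} gives $\lim_{t\to\infty}t^{-\kappa}\log\E_X[e^{\frac12A_t}]=\mathbf M(\alpha,\beta_0,d,\gamma)$ with $\kappa:=\tfrac{2\alpha-\beta-\alpha\beta_0}{\alpha-\beta}$. Next I would use the self-similarity \eqref{scaling}, $A_t\overset{d}{=}t^{\mu}A_1$ with $\mu:=2-\tfrac\beta\alpha-\beta_0=\tfrac{2\alpha-\beta-\alpha\beta_0}{\alpha}$: since $t\mapsto\tfrac12t^{\mu}$ is a continuous increasing bijection of $(0,\infty)$, substituting $s=\tfrac12t^{\mu}$ turns the last limit into $\lim_{s\to\infty}(2s)^{-\kappa/\mu}\log\E[e^{sA_1}]=\mathbf M$, which is \eqref{eq.star} once one checks the elementary identity $\kappa/\mu=\Theta$. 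One also needs $\mathbf M(\alpha,\beta_0,d,\gamma)\in(0,\infty)$: finiteness is the content of the Appendix, and positivity follows by evaluating \eqref{M} at $g(s,x)=\lambda^{d/2}\phi(\lambda x)$ for a fixed $L^2$-normalized Schwartz bump $\phi$ and letting $\lambda\downarrow0$, the $\gamma$-term being of order $\lambda^{\beta}$ and the $\mathcal E_\alpha$-term of order $\lambda^{\alpha}$, with $\beta<\alpha$.

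Granting \eqref{eq.star}, the assertions \eqref{eq.cr.1} and \eqref{eq.cr.2} are soft. For \eqref{eq.cr.2} I would argue by contradiction: if $\E[e^{\theta A_1^{\eta_0}}]<\infty$ for some $\theta>0$, then Young's inequality $sa\le\theta a^{\eta_0}+\tfrac1{\Theta}(\eta_0\theta)^{-\Theta/\eta_0}s^{\Theta}$ yields $\E[e^{sA_1}]\le\E[e^{\theta A_1^{\eta_0}}]\exp\!\big(\tfrac1{\Theta}(\eta_0\theta)^{-\Theta/\eta_0}s^{\Theta}\big)$; dividing the logarithm by $s^{\Theta}$ and letting $s\to\infty$, \eqref{eq.star} forces $c\le\tfrac1{\Theta}(\eta_0\theta)^{-\Theta/\eta_0}$, i.e. $\theta\le\eta_0^{-1}(\Theta c)^{-\eta_0/\Theta}=C_0$, so $\theta>C_0$ is impossible. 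For \eqref{eq.cr.1}, fix $\theta<C_0$: by \eqref{eq.star}, $\log\E[e^{sA_1}]\le(c+\varepsilon)s^{\Theta}$ for all large $s$ and any $\varepsilon>0$, so optimizing the Chebyshev bound $\P(A_1>a)\le e^{-sa}\E[e^{sA_1}]$ over $s$ gives $\P(A_1>a)\le\exp(-C_0(\varepsilon)a^{\eta_0})$ for all large $a$, with $C_0(\varepsilon)=\eta_0^{-1}((c+\varepsilon)\Theta)^{-\eta_0/\Theta}\uparrow C_0$ as $\varepsilon\downarrow0$; plugging this into the layer-cake identity $\E[e^{\theta A_1^{\eta_0}}]=1+\theta\eta_0\int_0^{\infty}a^{\eta_0-1}e^{\theta a^{\eta_0}}\P(A_1>a)\,da$ shows finiteness as soon as $\theta<C_0(\varepsilon)$, which can be arranged.

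Finally \eqref{eq.cr.3} and \eqref{eq.cr.4} would be obtained from \eqref{eq.cr.1}--\eqref{eq.cr.2} by comparing powers of $A_1$. If $\eta<\eta_0$, then $A_1^{\eta}=(A_1^{\eta_0})^{\eta/\eta_0}$ is a sublinear power of $A_1^{\eta_0}$, so $A_1^{\eta}\le\varepsilon A_1^{\eta_0}+C_{\varepsilon}$ for every $\varepsilon>0$, and \eqref{eq.cr.1} applied with $\theta\varepsilon<C_0$ gives $\E[e^{\theta A_1^{\eta}}]<\infty$ for all $\theta>0$. If $\eta>\eta_0$, then on $\{A_1\ge R\}$ we have $A_1^{\eta}\ge R^{\eta-\eta_0}A_1^{\eta_0}$, so choosing $R$ with $\theta R^{\eta-\eta_0}>C_0$ makes the contribution of $\{A_1\ge R\}$ to $\E[e^{\theta A_1^{\eta}}]$ infinite by \eqref{eq.cr.2}, while $\{A_1<R\}$ contributes at most $e^{\theta R^{\eta}}$. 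The one step requiring genuine work is \eqref{eq.star}: the identity $\E[u(t,x)]=\E_X[e^{\frac12A_t}]$ is routine, but the scaling bookkeeping — recognizing that the Lyapunov scaling exponent $\kappa/\mu$ equals $\Theta$, so that its conjugate $\alpha/\beta$ is the critical exponent, and then matching the Legendre-dual constant $\eta_0^{-1}(c\Theta)^{1-\eta_0}$ to the $C_0$ of \eqref{eq.critical} — is where the content lies; everything downstream of \eqref{eq.star} is elementary convexity.
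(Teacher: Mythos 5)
Your proposal is correct, and its backbone is the same as the paper's: both start from Theorem \ref{pa} with $p=1$, $\rho=0$ (which is proved independently of this statement, so there is no circularity), use the identity $\E[u(t,x)]=\E_X\big[e^{\frac12\int_0^t\int_0^t|r-s|^{-\beta_0}\gamma(X_r-X_s)drds}\big]$ and the scaling \eqref{scaling} to convert the Lyapunov limit into the precise asymptotics of the moment generating function of $A_1$, and your exponent bookkeeping ($\kappa/\mu=\alpha/(\alpha-\beta)$, conjugate exponent $\alpha/\beta$) and your Legendre-dual constant $\eta_0^{-1}(c\Theta)^{1-\eta_0}$ do match \eqref{eq.critical}. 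Where you diverge is in how the critical integrability is extracted: the paper passes through a G\"artner--Ellis type theorem for nonnegative random variables to get the exact tail rate \eqref{eq.zlimit}, $\lim_{s\to\infty}s^{-1}\log\P(Z_1^{2\alpha/\beta}\ge s)=-C_0$, and then obtains \eqref{eq.cr.1} by the layer-cake bound and asserts \eqref{eq.cr.2}--\eqref{eq.cr.4} "similarly"; you instead avoid the LDP altogether, getting the upper tail bound from Chebyshev (which suffices for \eqref{eq.cr.1} and \eqref{eq.cr.3}) and proving the divergence statements \eqref{eq.cr.2}, \eqref{eq.cr.4} directly by a Young's-inequality/convex-duality contradiction from the lower half of the MGF asymptotics. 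This buys you something: you never need the tail \emph{lower} bound, hence no essential-smoothness verification, and the divergence direction becomes a two-line convexity argument rather than an appeal to the tail LDP; the paper's route, in exchange, delivers the sharp tail constant itself as an intermediate object, which is what it reuses for all four claims. You also supply an explicit positivity argument for $\mathbf M(\alpha,\beta_0,d,\gamma)$ via a rescaled bump ($\lambda^{\beta}$ versus $\lambda^{\alpha}$ with $\beta<\alpha$), a point the paper leaves implicit; this is a worthwhile addition since $C_0\in(0,\infty)$ requires $\mathbf M\in(0,\infty)$.
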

\begin{proof}
Recall that $Z_t$ is defined in \eqref{eqzt}. Theorem \ref{pa} implies that, when $p=1$ and $\rho=0$, 
$$\lim_{t\to\infty}t^{-\frac{2\alpha-\beta-\alpha\beta_0}{\alpha-\beta}} \log \E \exp\left(\frac12 Z_t^2\right)=\mathbb M(\alpha, \beta_0, d,\gamma).$$
By the scaling property \eqref{scaling} of $Z_t^2$ and the change of variable $s=t^{\frac{2\alpha-\beta-\alpha\beta_0}{\alpha-\beta}}$, the above equation is equivalent to 
$$\lim_{s\to \infty }\frac1s \log \E\exp\left( \theta  s^{1-\beta/\alpha} Z_1^2 \right)=(2\theta)^{\frac{\alpha}{\alpha-\beta}}\mathbb M(\alpha, \beta_0, d,\gamma).$$
Then the G\"artner-Ellis theorem implies 
\begin{align*}
\lim_{s\to \infty}\frac1s \log \mathbb P(s^{-\beta/\alpha} Z_1^2\ge \lambda)&=-\sup_{\theta>0}\big\{\theta\lambda -(2\theta)^{\frac{\alpha}{\alpha-\beta}}\mathbb M(\alpha, \beta_0, d,\gamma) \big\}\\
&=-\lambda^{\frac\alpha\beta} \frac{\beta}{\alpha-\beta} \left(\frac{\alpha-\beta}{2\alpha}\right)^{\frac\alpha\beta}  \Big( \mathbb M(\alpha, \beta_0, d,\gamma)\Big)^{\frac{\beta-\alpha}{\beta}}.
\end{align*}
Denote
\begin{equation}\label{eq.critical}
C_0:= \frac{\beta}{\alpha-\beta} \left(\frac{\alpha-\beta}{2\alpha}\right)^{\frac\alpha\beta}  \Big( \mathbb M(\alpha, \beta_0, d,\gamma)\Big)^{\frac{\beta-\alpha}{\beta}},
\end{equation}
and hence $C_0$ is a finite positive constant. 
Then we have 
\begin{equation}\label{eq.zlimit}
\lim_{s\to \infty}\frac1s \log \mathbb P( Z_1^\frac{2\alpha}\beta\ge s)=-C_0.
\end{equation}
For any fixed $\si\in(0,1)$, there exists $T_\si>0$ such that when $t>T_\si$, 
$$\mathbb P(Z_1^\frac{2\alpha}\beta\ge t)\le \exp(-t\si C_0), $$ and therefore,
\begin{eqnarray*}
 \E\left[\exp(\theta Z_1^{\frac{2\alpha}\beta})\right]
 &=&1+\int_0^\infty \P(\theta Z_1^{\frac{2\alpha}\beta}\ge t) e^t  dt\\
&\le& 1+ \int_0^{T_\si} e^t dt +\int_{T_\si}^\infty e^{- t\si \theta^{-1}C_0} e^t  dy,
\end{eqnarray*}
where the right-hand side is finite when $\theta<\si C_0$.  Since  $\si\in (0,1)$ can be arbitrarily chosen, the first result \eqref{eq.cr.1} is obtained.  Finally the inequalitys \eqref{eq.cr.2}, \eqref{eq.cr.3}, \eqref{eq.cr.4} can be proved in a similar way by using \eqref{eq.zlimit}, and the proof is concluded.
\end{proof}
}
\end{remark}

\medskip

To obtain the optimal asymptotics for 
 $\E\exp\left(\int_0^t\int_0^t|r-s|^{-\beta_0}\gamma(X_r-X_s)drds\right)$,   we first investigate the asymptotics of $ \E\exp\left(\int_0^t\int_0^t \gamma(X_s-X_r)dsdr\right)$.
\begin{lemma}\label{bound-4} Under the  condition \eqref{Dalang}, there exists a 
constant $C\in(0,\infty)$, such that 
\begin{equation}\label{gamma}
\lim_{t\to\infty} t^{-\frac{2\alpha-\beta}{\alpha-\beta}} \log \E \exp\left( \theta\int_0^t\int_0^t \gamma(X_r-X_s)drds\right)=C \theta^{\frac{\alpha}{\alpha-\beta}}, \quad \forall\, \theta>0.
\end{equation}
 Let $\tilde X$ be an independent copy of $X$. Then under the condition \eqref{Dalang}, there exist $0<D_1\le D_2<\infty$ such that for all $\theta>0$,
\begin{align}\label{gamma'}
D_1\theta^{\frac{\alpha}{\alpha-\beta}}&\le \liminf_{t\to\infty} t^{-\frac{2\alpha-\beta}{\alpha-\beta}} \log \E \exp\left( \theta\int_0^t\int_0^t \gamma(X_r-\tilde X_s)drds\right)\notag\\
&\le \limsup_{t\to\infty} t^{-\frac{2\alpha-\beta}{\alpha-\beta}} \log \E \exp\left( \theta\int_0^t\int_0^t \gamma(X_r-\tilde X_s)drds\right)\le D_2\theta^{\frac{\alpha}{\alpha-\beta}}.
\end{align}

\end{lemma}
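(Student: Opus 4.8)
Put $Z_t=\bigl(\int_0^t\int_0^t\gamma(X_r-X_s)\,drds\bigr)^{1/2}$, $\kappa=1-\frac{\beta}{2\alpha}$ and $m=\frac{2\alpha-\beta}{\alpha-\beta}$; then \eqref{scaling} (with $\beta_0=0$) reads $Z_{at}\overset{d}{=}a^{\kappa}Z_t$, and $\kappa\in(\tfrac12,1)$, $\tfrac1\kappa\in(1,2)$, $m>2$, $m-1=\tfrac{\alpha}{\alpha-\beta}$, $2\kappa(m-1)=m$. (If $\beta=0$ then $\gamma\equiv1$ and both statements are trivial, so assume $\beta>0$.) For \eqref{gamma} the plan is: obtain the asymptotics of $\E\exp(\theta Z_t)$ on the linear scale by subadditivity, then pass to those of $\E\exp(\theta Z_t^2)$ on the scale $t^m$ --- the chain of Legendre transforms of Remark \ref{remark-exponential}, read backwards. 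Using the spatial kernel identity \eqref{spacekernel} one writes $Z_t^2=C(\gamma)\int_{\R^d}\bigl(\int_0^tK(X_u-x)\,du\bigr)^2dx$, so the $L^2(\R^d)$ triangle inequality together with the stationary independent increments of $X$ shows, exactly as in the proof of Theorem \ref{thm-exponential}, that $Z_t$ is subadditive up to an independent copy; as $Z_t$ is also non-negative, non-decreasing and (Lemma \ref{continuity}) pathwise continuous, \cite[Theorem 1.3.5]{Chen} yields $\Psi(\theta):=\lim_{t\to\infty}\tfrac1t\log\E\exp(\theta Z_t)\in[0,\infty)$, and the scaling forces $\Psi(\theta)=\theta^{1/\kappa}\Psi(1)$. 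The crucial point --- the only place $\beta_0=0$ is used essentially --- is that $\Psi(1)>0$: on $\{\sup_{u\le t}|X_u|\le R\}$ one has $|X_r-X_s|\le 2R$, hence $\gamma(X_r-X_s)\ge(2R)^{-\beta}$ and $Z_t\ge(2R)^{-\beta/2}t$, while $\P(\sup_{u\le t}|X_u|\le R)\ge e^{-c(R)t}$ with $c(R)$ of order $R^{-\alpha}$ (standard for $\alpha$-stable processes), giving
\begin{equation}\label{bound-6}
\Psi(1)\ge\sup_{R>0}\bigl((2R)^{-\beta/2}-c(R)\bigr)>0
\end{equation}
since $\beta/2<\alpha$. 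Finally, $\Psi(1)>0$ makes $\theta\mapsto\theta^{1/\kappa}\Psi(1)$ essentially smooth on $[0,\infty)$, so the G\"artner--Ellis theorem for non-negative random variables (\cite[Corollary 1.2.5]{Chen}) gives an LDP for $Z_t/t$ with a power rate function; hence $\log\P(Z_1^2\ge z)\sim-a_0z^{\alpha/\beta}$ as $z\to\infty$ for a finite $a_0>0$, and Varadhan's lemma --- equivalently, maximizing $\lambda z-a_0z^{\alpha/\beta}$, whose maximum is $C\lambda^{m-1}$ with $C\in(0,\infty)$ since $\alpha/\beta>1$ --- yields $\log\E\exp(\lambda Z_1^2)\sim C\lambda^{m-1}$ as $\lambda\to\infty$; substituting $\lambda=\theta t^{2\kappa}$ and using $Z_t^2\overset{d}{=}t^{2\kappa}Z_1^2$ and $2\kappa(m-1)=m$ is precisely \eqref{gamma}.

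For \eqref{gamma'} write $W_t=\int_0^t\int_0^t\gamma(X_r-\tilde X_s)\,drds$. The \emph{upper} bound follows from \eqref{gamma}: by \eqref{spacekernel}, $W_t=C(\gamma)\int_{\R^d}\bigl(\int_0^tK(X_r-x)\,dr\bigr)\bigl(\int_0^tK(\tilde X_s-x)\,ds\bigr)dx$, so Cauchy--Schwarz in $L^2(\R^d)$ gives $W_t\le Z_t\tilde Z_t$ with $\tilde Z_t$ an independent copy of $Z_t$, whence $\theta W_t\le\frac\theta2(Z_t^2+\tilde Z_t^2)$, $\log\E\exp(\theta W_t)\le 2\log\E\exp(\frac\theta2Z_t^2)$ (finite by \eqref{exponential}), and $\limsup_t t^{-m}\log\E\exp(\theta W_t)\le 2C(\theta/2)^{m-1}=:D_2\theta^{m-1}$, $D_2\in(0,\infty)$. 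For the \emph{lower} bound I would concatenate: $\bar X_r=X_r$ on $[0,t]$ and $\bar X_r=X_t+\tilde X_{r-t}$ on $[t,2t]$ is an $\alpha$-stable process on $[0,2t]$, so $Z_{2t}^2\overset{d}{=}\int_0^{2t}\int_0^{2t}\gamma(\bar X_r-\bar X_s)\,drds=V+\tilde V+2U$, where $V=\int_{[0,t]^2}\gamma(X_r-X_s)$, $\tilde V=\int_{[0,t]^2}\gamma(\tilde X_r-\tilde X_s)$ (independent, each $\overset{d}{=}Z_t^2$) and $U=\int_0^t\int_0^t\gamma((X_r-X_t)-\tilde X_s)\,drds$; the time-reversal property of L\'evy processes (applied to the increments of $X$ on $[0,t]$) and the symmetry of $X$ give $U\overset{d}{=}W_t$. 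Applying H\"older to $\E e^{\mu(V+\tilde V+2U)}$ with exponents $\tfrac1p+\tfrac1q+\tfrac1r=1$ ($p,q,r>1$) and rearranging gives $(\E e^{2r\mu W_t})^{1/r}\ge\E e^{\mu Z_{2t}^2}\bigl((\E e^{p\mu Z_t^2})^{1/p}(\E e^{q\mu Z_t^2})^{1/q}\bigr)^{-1}$, so dividing by $t^m$, taking $\liminf$, and inserting \eqref{gamma} together with $(2t)^m=2^m t^m$ yields $\liminf_t t^{-m}\log\E e^{2r\mu W_t}\ge rC\mu^{m-1}(2^m-p^{m-2}-q^{m-2})$. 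Since $m>2$ one has $2^m>2\cdot2^{m-2}=2^{m-1}$, so one may pick $p=q\in(2,2^{(m-1)/(m-2)})$ (a non-empty interval) with $2^m>p^{m-2}+q^{m-2}$ and then $r=(1-2/p)^{-1}\in(1,\infty)$; with $\theta=2r\mu$ this reads $\liminf_t t^{-m}\log\E e^{\theta W_t}\ge D_1\theta^{m-1}$ for some $D_1\in(0,\infty)$, and necessarily $D_1\le D_2$. This gives \eqref{gamma'}.

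The delicate step is inside \eqref{gamma}: passing from the linear-scale asymptotics of $\E\exp(\theta Z_t)$ to the $t^m$-scale asymptotics of $\E\exp(\theta Z_t^2)$, i.e. establishing $\Psi(1)>0$ and then running G\"artner--Ellis and Varadhan at the non-standard speeds involved --- this is essentially the computation indicated in Remark \ref{remark-exponential}, and it is exactly here that $\beta_0=0$ enters, since for $\beta_0\in(0,1)$ one has $\Psi(1)=0$, the analogue of \eqref{bound-6} fails, and the $t^m$-asymptotics must instead be obtained variationally. Given \eqref{gamma}, both halves of \eqref{gamma'} are then comparatively routine: the upper bound is Cauchy--Schwarz, and the lower bound the concatenation/H\"older device, which reduces it once more to \eqref{gamma} and one use of the time-reversal property.
\hfill
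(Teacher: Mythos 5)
Your proposal is correct, but it reaches both halves of the lemma by a more self-contained route than the paper. For \eqref{gamma} the paper simply quotes \cite[Eq.~(1.18)]{CR} in the radial case and then treats $\gamma(x)=\prod_j|x_j|^{-\beta_j}$ by the same subadditivity/scaling/G\"artner--Ellis equivalence you use (to get finiteness of the constant) together with the pointwise comparison $\prod_j|x_j|^{-\beta_j}\ge|x|^{-\beta}$ (to get positivity); you instead run the chain subadditivity $\Rightarrow$ $\Psi(\theta)=\theta^{1/\kappa}\Psi(1)$ $\Rightarrow$ tail asymptotics $\Rightarrow$ \eqref{gamma} for both kernels at once, and supply the needed positivity $\Psi(1)>0$ directly by a confinement argument, $\gamma(X_r-X_s)\ge (2R)^{-\beta}$ on $\{\sup_{u\le t}|X_u|\le R\}$ combined with $\P(\sup_{u\le t}|X_u|\le R)\ge e^{-c(R)t}$, $c(R)\asymp R^{-\alpha}$, which works because $\beta/2<\alpha$; that small-deviation bound is indeed standard for symmetric stable processes, but it is the one external input you use there and deserves a precise reference. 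Your exponent bookkeeping ($\kappa$, $m$, $2\kappa(m-1)=m$) is consistent, and there is no circularity: you only invoke Theorem \ref{thm-exponential} and Lemma \ref{continuity}, neither of which depends on this lemma, while the paper's Remark \ref{remark-exponential} draws its positivity claim from this lemma, not conversely. For \eqref{gamma'} your upper bound is essentially the paper's Cauchy--Schwarz argument; for the lower bound the paper cites the tail asymptotics of \cite[Theorem 1.2]{BCR} for the mutual energy and then applies Varadhan's lemma, whereas you avoid that citation by concatenating $X$ and $\tilde X$ into a single stable path on $[0,2t]$, identifying the cross term with $W_t$ via time reversal and symmetry, and extracting the lower bound from \eqref{gamma} by H\"older with $p=q\in(2,2^{(m-1)/(m-2)})$ so that $2^m>p^{m-2}+q^{m-2}$ (possible since $m>2$); the rearrangement is legitimate because the self-energy exponential moments are finite for all parameters under \eqref{Dalang} by Theorem \ref{thm-exponential} with $\beta_0=0$. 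In short, the paper's proof is shorter because it leans on the known large-deviation results of Chen--Rosen and Bass--Chen--Rosen, while yours trades those citations for an elementary confinement estimate and a concatenation/H\"older device, giving a proof that needs only the machinery of \cite{Chen} already used elsewhere in the paper.
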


\begin{proof}  When $\gamma(x)=|x|^{-\beta}$, \eqref{gamma} is a direct consequence of \cite[Equation (1.18) ]{CR} using the scaling property of the $\int_0^t\int_0^t \gamma(X_r-X_s)drds$. 
When $\gamma(x)=\prod_{j=1}^d|x_j|^{-\beta_j}$, it suffices to show that there there exists a constant $C_1<\infty$ such that
\begin{equation}\label{gamma1}
\lim_{t\to\infty} t^{-\frac{2\alpha-\beta}{\alpha-\beta}} \log \E \exp\left( \theta\int_0^t\int_0^t \gamma (X_r-X_s)drds\right)=C_1 \theta^{\frac{\alpha}{\alpha-\beta}}\,. 
\end{equation}
This is because   that $\prod_{j=1}^d|x_j|^{-\beta_j}\ge|x|^{-\beta}$ and hence 
$C_1$ is greater  than or equal to the constant $C>0$ in \eqref{gamma} 
when  $\gamma(x)=|x|^{-\beta}$.  This means that if $C_1<\infty$ satisfies \eqref{gamma1}, 
then it will be automatically positive. 

From now on  the generic constant $C$ may be different in different places.

We claim that (\ref{gamma1}) is equivalent to 
\begin{align}\label{bound-6}
\lim_{t\to\infty}{1\over t}\log\E
\exp\bigg(\theta\bigg(\int_0^t\!\!\int_0^t\gamma
(X_r-X_s)drds\bigg)^{1/2} \bigg) =C\theta^{2\alpha\over 2\alpha-\beta}\,,
\hskip.2in \ \forall \  \theta>0\,
\end{align}
for some constant $C\in(0,\infty)$, which can be proved in the same way as we did to get \eqref{subadditivity'} in the proof of the Theorem \ref{thm-exponential}. Indeed, by the scaling property (\ref{scaling}) with $\beta_0=0$,
and by a G\"artner-Ellis type result for non-negative random
variables (\cite[Corollary 1.2.5]{Chen}), both (\ref{gamma1})
and (\ref{bound-6}) are equivalent to the tail asymptotics
$$
\lim_{t\to\infty}t^{-1}\log\P\bigg(\int_0^1\!\!\int_0^1\gamma(
X_r-X_s) drds\ge\lambda
t^{\frac\beta\alpha}\bigg)=-\sup_{\theta>0}
\left\{\sqrt\lambda \theta-C\theta^{\frac{2\alpha}{2\alpha-\beta}}\right\}=-C\lambda^{\frac\alpha\beta}\,, \,\,\forall \  \lambda>0.
$$

 Now we prove \eqref{gamma'}. The upper bound can be obtained by \eqref{gamma} and the observation that 
\begin{align*}
&\E \exp\left( \theta\int_0^t\int_0^t \gamma(X_r-\tilde X_s)drds\right)=\E \exp\left( \theta C(\gamma)\int_{\R^d}\int_0^t K(X_r-x)dr\int_0^t K(\tilde X_s-x)dsdx\right)\\
&\le \E \exp\left( \frac \theta2 C(\gamma)\int_{\R^d}\left(\int_0^t K(X_r-x)dr\right)^2 +\left(\int_0^t K(\tilde X_r-x)dr\right)^2dx\right)\\
&= \left[ \E \exp\left( \frac \theta2 C(\gamma)\int_{\R^d}\left(\int_0^t K(X_r-x)dr\right)^2 dx\right)\right]^2\le \E \exp\left( \theta\int_0^t\int_0^t \gamma(X_r-X_s)drds\right).
\end{align*}
 For the lower bound, if suffices to consider the case $\gamma(x)=|x|^{-\beta}$. By \cite[Theorem 1.2]{BCR}, 
 $$\lim_{t\to\infty} t^{-\frac\alpha\beta}\log \mathbb P\left(\int_0^1\int_0^1\gamma(X_r-\tilde X_s)drds\ge t\right)=-a,$$
 where $a$ is a positive constant. By the scaling property \eqref{scaling}, the above equality is equivalent to 
 $$\lim_{t\to\infty} t^{-\frac{2\alpha-\beta}{\alpha-\beta}}\log \mathbb P\left(t^{-\frac{2\alpha-\beta}{\alpha-\beta}}\int_0^t\int_0^t\gamma(X_r-\tilde X_s)drds\ge \lambda\right)=-a\lambda^{\frac\alpha\beta}, \text{ for all } \lambda>0.$$
 Then by Varadhan's integral lemma, we have 
 $$\lim_{t\to \infty}  t^{-\frac{2\alpha-\beta}{\alpha-\beta}}\log \exp\left(\theta\int_0^t\int_0^t\gamma(X_r-\tilde X_s)drds\right)=\sup_{\lambda>0}\{\theta \lambda-a\lambda^{\frac\alpha\beta}\} = b \theta^{\frac{\alpha}{\alpha-\beta}}, $$
 for some $b>0.$
\end{proof}
\medskip

Based on the above result, we shall derive the following asymptotics for $\E\exp\Big(\theta\int_0^t\!\!\int_0^t\vert
r-s\vert^{-\beta_0}\gamma( X_r-X_s)drds\Big)$ by comparison method.
\begin{proposition}\label{bound-1}
Under the condition \ref{Dalang'},  there is $0<C_1<C_2<\infty$ such that for any $\theta>0$,
\begin{align}\label{bound-3}
C_1\theta^{{
\alpha\over \alpha-\beta}}&\le \liminf_{t\to\infty}t^{-{2\alpha-\beta-\alpha\beta_0\over \alpha-\beta}}\log
\E\exp\bigg(\theta\int_0^t\!\!\int_0^t\vert
r-s\vert^{-\beta_0}\gamma( X_r-X_s)drds\bigg)\notag\\
&\le \limsup_{t\to\infty}t^{-{2\alpha-\beta -\alpha\beta_0\over \alpha-\beta}}\log
\E\exp\bigg(\theta\int_0^t\!\!\int_0^t\vert
r-s\vert^{-\beta_0}\gamma( 
X_r-X_s)drds\bigg)\le C_2\theta^{{ \alpha\over
\alpha-\beta}}.
\end{align}
 Similarly, under the condition \eqref{Dalang}, there is $0<D_1<D_2<\infty$ such that for any $\theta>0$,
\begin{align}\label{bound-3''}
D_1\theta^{{
\alpha\over \alpha-\beta}}&\le \liminf_{t\to\infty}t^{-{2\alpha-\beta-\alpha\beta_0\over \alpha-\beta}}\log
\E\exp\bigg(\theta\int_0^t\!\!\int_0^t\vert
r-s\vert^{-\beta_0}\gamma( X_r-\tilde X_s)drds\bigg)\notag\\
&\le \limsup_{t\to\infty}t^{-{2\alpha-\beta -\alpha\beta_0\over \alpha-\beta}}\log
\E\exp\bigg(\theta\int_0^t\!\!\int_0^t\vert
r-s\vert^{-\beta_0}\gamma( 
X_r-\tilde X_s)drds\bigg)\le D_2\theta^{{ \alpha\over
\alpha-\beta}}.
\end{align}
\end{proposition}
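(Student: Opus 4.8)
The plan is to sandwich the double integral $Z_t^2:=\int_0^t\int_0^t|r-s|^{-\beta_0}\gamma(X_r-X_s)\,dr\,ds$ between two quantities whose exponential moments are already controlled by Lemma \ref{bound-4}, and then to transport the known asymptotics through the self-similarity \eqref{scaling}. Recall that $Z_{at}^2\overset{d}{=}a^{2\kappa}Z_t^2$ with $2\kappa=2-\beta/\alpha-\beta_0$, that $\eta:=(2\alpha-\beta-\alpha\beta_0)/(\alpha-\beta)=2\kappa\cdot\alpha/(\alpha-\beta)$, and write $V_t=\int_0^t\int_0^t\gamma(X_r-X_s)\,dr\,ds$, so that $V_{at}\overset{d}{=}a^{2-\beta/\alpha}V_t$. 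For the \emph{lower bound} I would use the trivial inequality $|r-s|^{-\beta_0}\ge t^{-\beta_0}$ on $[0,t]^2$, giving $Z_t^2\ge t^{-\beta_0}V_t$ and hence $\E\exp(\theta Z_t^2)\ge\E\exp(\theta t^{-\beta_0}V_t)$; applying the scaling of $V$ and relabelling $\tau=\tau(t)$ by $\tau^{2-\beta/\alpha}=t^{2-\beta/\alpha-\beta_0}$ turns the right side into $\E\exp(\theta V_\tau)$ with $\tau^{(2\alpha-\beta)/(\alpha-\beta)}=t^{\eta}$, and Lemma \ref{bound-4} (valid for both forms of $\gamma$) then produces the lower bound in \eqref{bound-3} with $C_1$ the constant of \eqref{gamma}. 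The lower half of \eqref{bound-3''} follows by the identical computation with $X_s$ replaced by the independent copy $\tilde X_s$ and \eqref{gamma'} used in place of \eqref{gamma}.

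For the \emph{upper bound when} $\gamma(x)=|x|^{-\beta}$, condition \eqref{Dalang'} lets me choose $p$ with $\alpha/(\alpha-\beta)<p<1/\beta_0$ (nothing to prove if $\beta_0=0$, since then $Z_t^2=V_t$) and $q=p/(p-1)\in(1,\alpha/\beta)$, and apply H\"older on $([0,t]^2,dr\,ds)$:
\[
Z_t^2\le\Big(\int_0^t\!\!\int_0^t|r-s|^{-\beta_0p}\,dr\,ds\Big)^{1/p}\Big(\int_0^t\!\!\int_0^t|X_r-X_s|^{-\beta q}\,dr\,ds\Big)^{1/q}=c_p\,t^{(2-\beta_0p)/p}\big(V_t^{(q)}\big)^{1/q},
\]
where $V_t^{(q)}:=\int_0^t\int_0^t|X_r-X_s|^{-\beta q}\,dr\,ds$ is a.s.\ finite because $\beta q<\alpha$, and $\beta_0p<1$ makes the first factor finite. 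Using $V_t^{(q)}\overset{d}{=}t^{2-\beta q/\alpha}V_1^{(q)}$, the right side equals in law $c_p\,t^{2\kappa}\big(V_1^{(q)}\big)^{1/q}$ (the power of $t$ collapsing to $2-\beta_0-\beta/\alpha=2\kappa$). Lemma \ref{bound-4} applied to the kernel $|x|^{-\beta q}$, together with the G\"artner--Ellis equivalence used in its proof, gives $\log\P(V_1^{(q)}\ge v)\sim -c\,v^{\alpha/(\beta q)}$, i.e.\ $\log\P\big((V_1^{(q)})^{1/q}\ge x\big)\sim -c\,x^{\alpha/\beta}$ with $\alpha/\beta>1$; a Varadhan-type Laplace estimate then yields $\log\E\exp\big(\mu(V_1^{(q)})^{1/q}\big)\sim\tilde c\,\mu^{\alpha/(\alpha-\beta)}$ as $\mu\to\infty$. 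Taking $\mu=\theta c_pt^{2\kappa}$ gives $\limsup_t t^{-\eta}\log\E\exp(\theta Z_t^2)\le C_2\theta^{\alpha/(\alpha-\beta)}$, which is exactly the claimed exponent.

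The \emph{product case} $\gamma(x)=\prod_j|x_j|^{-\beta_j}$ in the upper bound is where I expect the real difficulty. The same scheme applies, but the second H\"older factor now involves $\prod_j|X_r^{(j)}-X_s^{(j)}|^{-\beta_jq}$, and Lemma \ref{bound-4} is available for the kernel $\gamma^q$ only when $\beta_jq<1$ for every $j$; combined with $\beta_0p<1$ this forces $\beta_0+\max_j\beta_j<1$, and while \eqref{Dalang'} guarantees this when $\alpha\le1$, it need not hold for $\alpha>1$ (one cannot dominate the product kernel by a radial one, since $|x_j|\le|x|$ pushes the power kernels in the wrong direction). The plan for this regime is: split the temporal integral into its near-diagonal part $\{|r-s|\le1\}$, whose exponential moments are only $\exp(O(t))=\exp(o(t^{\eta}))$ — proved softly by an even/odd unit-block decomposition (the blocks yielding i.i.d.\ pieces by stationarity of the increments of $X$) together with the exponential integrability of Theorem \ref{thm-exponential} — and its far part $\{|r-s|>1\}$, handled by a $(d{+}1)$-fold H\"older inequality, one factor in time and one per spatial coordinate, each spatial factor estimated by Lemma \ref{bound-4} in dimension one (each marginal of $X$ being a one-dimensional symmetric $\alpha$-stable process). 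The crux — and the step I expect to be the main obstacle — is choosing the $d{+}1$ H\"older exponents so that all the one-dimensional integrability restrictions hold simultaneously while the powers of $t$ still recombine to precisely $t^{\eta}$; this is the point at which the full strength of \eqref{Dalang'} must be exploited, possibly through a case split in $\alpha$ or a more refined localization of the time singularity than the crude $\{|r-s|\le1\}/\{|r-s|>1\}$ cut. Finally, \eqref{bound-3''} is obtained from the same arguments after replacing $X_s$ by $\tilde X_s$ and \eqref{gamma} by \eqref{gamma'}.
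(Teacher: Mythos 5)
Your lower bounds are exactly the paper's argument (bound $|r-s|^{-\beta_0}\ge t^{-\beta_0}$, rescale, invoke Lemma \ref{bound-4}), and your H\"older-in-time-and-space upper bound for the radial kernel is a correct alternative route under \eqref{Dalang'}: the window $\alpha/(\alpha-\beta)<p<1/\beta_0$ is nonempty precisely because of \eqref{Dalang'}, the powers of $t$ recombine to $2-\beta_0-\beta/\alpha$, and the tail of $V_1^{(q)}$ from Lemma \ref{bound-4} plus a Laplace estimate gives the exponent $\theta^{\alpha/(\alpha-\beta)}$. But the proposition also covers the product kernel and the independent-copy bound \eqref{bound-3''} under \eqref{Dalang} alone, and there your argument has a genuine gap, which you yourself flag. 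The constraint $\beta_0p<1$, $\max_j\beta_jq<1$ forces $\beta_0+\max_j\beta_j<1$, which \eqref{Dalang'} does not guarantee for $\alpha>1$; and your fallback plan cannot work as stated: on the far region $\{|r-s|>1\}$ you can only use $|r-s|^{-\beta_0}\le 1$, so Lemma \ref{bound-4} returns a bound of order $\exp\big(C\theta^{\alpha/(\alpha-\beta)}t^{(2\alpha-\beta)/(\alpha-\beta)}\big)$, and since $(2\alpha-\beta)/(\alpha-\beta)>(2\alpha-\beta-\alpha\beta_0)/(\alpha-\beta)$ whenever $\beta_0>0$, the crude cut at $|r-s|=1$ destroys exactly the $t^{-\beta_0}$ gain that produces the correct exponent; the temporal decay has to be exploited at scales $|r-s|\asymp t$, not merely $|r-s|\ge 1$. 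The per-coordinate H\"older idea does not rescue this, because the time factor has already been given away. Likewise, deducing \eqref{bound-3''} ``by the same arguments'' is not justified: under \eqref{Dalang} alone the inequality $\alpha/(\alpha-\beta)<1/\beta_0$ may fail, so your choice of H\"older exponents is unavailable even for the radial kernel.

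For comparison, the paper's upper-bound proof avoids splitting the kernel at all and works uniformly for both forms of $\gamma$: writing the integral over $\{0\le s\le r\le t\}$ and splitting into the two half-squares $I_1,I_2$ and the rectangle $I_3=[0,t/2]\times[t/2,t]$, the half-square contributions are i.i.d.\ and, by \eqref{scaling}, each is a copy of the whole integral scaled by $(1/2)^{(2\alpha-\beta-\alpha\beta_0)/\alpha}$; choosing the H\"older exponent $p=2^{(2\alpha-\beta-\alpha\beta_0)/\alpha}>2$ makes these factors absorb into the left-hand side (a priori finiteness coming from Theorem \ref{thm-exponential}), and iterating the same trick inside $I_3$ leaves only a region where $|r-s|\ge t/4$, so that $|r-s|^{-\beta_0}\le(t/4)^{-\beta_0}$ and Lemma \ref{bound-4} applies after rescaling time to $t^{(2\alpha-\beta-\alpha\beta_0)/(2\alpha-\beta)}$ — this is \eqref{bound-12}. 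The same reduction, combined with the distributional identity $\int_0^{t/2}\int_{t/2}^t|r-s|^{-\beta_0}\gamma(X_r-X_s)\,dr\,ds\overset{d}{=}\int_0^{t/2}\int_0^{t/2}|r-s|^{-\beta_0}\gamma(X_r-\tilde X_s)\,dr\,ds$ and the exponential integrability from Remark 5.7 of \cite{Song}, yields the upper half of \eqref{bound-3''} under \eqref{Dalang} alone. If you want to keep your H\"older route, you would need to replace the $\{|r-s|\le1\}/\{|r-s|>1\}$ cut by a decomposition at scale $t$ of exactly this self-similar type; as written, the product-kernel case and \eqref{bound-3''} remain unproved.
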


\begin{remark}
By the scaling property \eqref{scaling}, the above asymptotics \eqref{bound-3} is equivalent to 
\begin{align}\label{bound-3'}
C_1\theta^{{\alpha\over \alpha-\beta}}&\le \liminf_{t\to\infty} \frac1t \log \E\exp\bigg(\frac\theta t\int_0^t\!\!\int_0^t\vert
r-s\vert^{-\beta_0}\gamma( X_r-X_s)drds\bigg)\notag\\
&\le 
\limsup_{t\to\infty} \frac1t \log \E\exp\bigg(\frac\theta t\int_0^t\!\!\int_0^t\vert
r-s\vert^{-\beta_0}\gamma( X_r-X_s)drds\bigg)\le C_2\theta^{{\alpha\over \alpha-\beta}},
\end{align}
respectively.  We also have a similar result for \eqref{bound-3''}. 
\end{remark}
\begin{proof}  The proof is similar to \cite[Proposition 2.1]{CHSX}, but we include details for the reader's convenience. First we prove the lower bound in \eqref{bound-3}. 
Note that 
$$
\begin{aligned}
&\int_0^t\!\!\int_0^t\vert r-s\vert^{-\beta_0}\gamma(
X_r-X_s)drds\ge t^{-\beta_0}
\int_0^t\!\!\int_0^t\gamma(
X_r-X_s)drds,
\end{aligned}
$$
where the term on the right-hand side has the same distribution as $$
\int_0^{t^{2\alpha-\beta-\alpha\beta_0\over 2\alpha-\beta}}\!
\int_0^{t^{2\alpha-\beta-\alpha\beta_0\over 2\alpha-\beta}}\gamma(
X_r-X_s)drds$$
by the scaling property \eref{scaling}. 
Then the lower bound  is an immediate consequence of
Lemma \ref{bound-4}.

Now we show the upper bound of \eqref{bound-3}.
By the symmetry of the integrand function,  we have 
$$
\int_0^t\!\!\int_0^t\vert r-s\vert^{-\beta_0}\gamma(
X_r-X_s)drds =2\iint_{[0\le s\le r \le t]}\vert r-s\vert^{-\beta_0}\gamma(
X_r-X_s)drds\,. 
$$
Thus, the  inequality  (\ref{bound-3}) is  equivalent to
\begin{align}\label{bound-11}
\limsup_{t\to\infty}t^{-{2\alpha-\beta-\alpha\beta_0\over \alpha-\beta}}\log
\E\exp\bigg(\theta\iint_{[0\le s\le r\le t]}
\vert r-s\vert^{-\beta_0}\gamma(
X_r-X_s)drds\bigg)\le C\theta^{{  \alpha\over \alpha-\beta}}\,.
\end{align}

Compared with lower bound, the estimation \eref{bound-11} is more difficult to obtain because $|r-s|^{-\beta_0}$ is unbounded when $r$ and $s$ are close. We shall decompose the integral $\int_{[0\le s\le r\le t]}
\vert r-s\vert^{-\beta_0}\gamma(
X_r-X_s)drds$ and then apply H\"older inequality to obtain the desired result. More precisely, let $[0\le s\le r\le t]=I_1\cup I_2 \cup I_3$, where $I_1=[0\le s\le t\le t/2], I_2=[t/2\le s\le r\le t]$ and $I_3=[0,t/2]\times [t/2, t]$. Noting that $\iint_{I_1}|r-s|^{-\beta_0}\gamma(X_r-X_s)drds$ and $\iint_{I_2}|r-s|^{-\beta_0}\gamma(X_r-X_s)drds$ are mutually independent and 
 are equal in law, by the H\"older inequality,
$$
\begin{aligned}
&\E\exp\bigg(\theta\iint_{[0\le s\le r \le t]}
\vert r-s\vert^{-\beta_0}\gamma(
X_r-X_s)drds\bigg)\\
&\le\Bigg(\E\exp\bigg(\theta p\iint_{I_1}
\vert r-s\vert^{-\beta_0}\gamma(
X_r-X_s)drds\bigg)\Bigg)^{2/p}\\
&\quad\quad \times\Bigg(\E\exp\bigg(\theta q
 \iint_{I_3} \vert
r-s\vert^{-\beta_0}\gamma( X_r-X_s)drds\bigg)\Bigg)^{1/q}\,,
\end{aligned}
$$
where $p^{-1}+q^{-1}=1$. Furthermore, by the scaling property \eqref{scaling},
$$
\begin{aligned}
&\iint_{I_1}
\vert r-s\vert^{-\beta_0}\gamma(
X_r-X_s)drds\\
&\buildrel d\over =\Big({1\over 2}\Big)^{2\alpha-\beta-\alpha\beta_0\over \alpha}
\iint_{[0\le s\le r\le t]}
\vert r-s\vert^{-\beta_0}\gamma(
X_r-X_s)drds\,.
\end{aligned}
$$
Taking $p=2^{2\alpha-\beta-\alpha\beta_0\over \alpha}$, we have
$$
\begin{aligned}
&\E\exp\bigg(\theta\iint_{[0\le r\le s\le t]}
\vert r-s\vert^{-\beta_0}\gamma(
X_r-X_s)drds\bigg)\\
&\le \Bigg(\E\exp\bigg(\theta q
\int_0^{t/2}\!\!\int_{t/2}^t
|r-s|^{-\beta_0}\gamma(
X_r-X_s)drds\bigg)\Bigg)^{{1\over q}{p\over p-2}}\,.
\end{aligned}
$$
Now to obtain \eqref{bound-3}, it suffices to show  
\begin{align}\label{bound-12}
\limsup_{t\to\infty}t^{-{2\alpha-\beta -\alpha\beta_0\over \alpha-\beta}}\log
\E\exp\bigg(\theta
\int_0^{t/2}\!\!\int_{t/2}^{t}
|r-s|^{-\beta_0}\gamma(
X_r-{X}_s)drds\bigg)\le C\theta^{{  \alpha\over \alpha-\beta}}\,.
\end{align}
 Actually, decomposing $[0, t/2]\times[t/2 , t]$ as $A\cup B$, where $A=[t/4, t/2]\times [t/2, 3t/4]$ and $B=[0, t/2]\times[t/2 , t]\setminus A$, we have 
 \begin{align}\label{e.2.22}
&\E\exp\bigg(\theta\int_{0}^{t/2}\int_{t/2}^t
\vert r-s\vert^{-\beta_0}\gamma(
X_r-X_s)drds\bigg)\notag\\
&\le \Bigg(\E\exp\bigg(\theta p \iint_{A}
\vert r-s\vert^{-\beta_0}\gamma(
X_r-X_s)drds\bigg)\Bigg)^{1/p}\notag\\
&\quad \quad \times\Bigg(\E\exp\bigg(\theta q
 \iint_{B} \vert
r-s\vert^{-\beta_0}\gamma( X_r-X_s)drds\bigg)\Bigg)^{1/q}\,,
\end{align}
where $1/p+1/q=1$ and $p,q>0$ are to be determined later. Since $X$ has stationary increments and by (\ref{scaling}),  we have
$$
\begin{aligned}
\iint_{A}
\vert r-s\vert^{-\beta_0}\gamma(
X_r-X_s)drds &\buildrel d\over =
\int_0^{t/4}\!\!\int_{t/4}^{t/2} 
\vert r-s\vert^{-\beta_0}\gamma(
X_r-X_s)drds\\
&\buildrel d\over =\Big({1\over 2}\Big)^{2\alpha-\beta-\alpha\beta_0\over \alpha}
\int_0^{t/2}\!\!\int_{t/2}^t
\vert r-s\vert^{-\beta_0}\gamma(
X_r-X_s)drds\,.
\end{aligned}
$$
Now let us choose $p=2^{2\alpha-\beta-\alpha\beta_0\over \alpha}$, and the above identity combined with
 \eref{e.2.22} yields 
\begin{align*}
&\E\exp\bigg(\theta\int_{0}^{t/2}\!\!\int_{t/2}^t
\vert r-s\vert^{-\beta_0}\gamma(
X_r-X_s)drds\bigg)\le \E\exp\bigg(\theta q
 \iint_{B} \vert
r-s\vert^{-\beta_0}\gamma( X_r-X_s)drds\bigg)\\
&\le \E\exp\bigg(\theta q \left(\frac{t}{4}\right)^{-\beta_0}
 \int_0^t\!\!\int_0^t \gamma( X_r-X_s)drds\bigg)
\le \E\exp\bigg(\theta q 4^{\beta_0}
 \int_0^{t^{\eta}}\!\!\int_0^{t^{\eta}} \gamma( X_r-X_s)drds\bigg)\,,
\end{align*}
where $\eta={2\alpha-\beta-\alpha\beta_0\over
2\alpha-\beta}$. Thus \eref{bound-12} follows from Lemma \ref{bound-4} with $t$
being replaced by $t^{\eta}$ and \eqref{bound-3}  is obtained.  

 The lower bound in \eqref{bound-3''} can be obtained in a similar way as for the lower bound in \eqref{bound-3}, by using the second half of Lemma \ref{bound-4}.  Now we show the upper bound. Noting that the stable process has  stationary increments which are independent over disjoint time intervals, we have
\begin{align*}
\int_{0}^{t/2}\!\!\int_{t/2}^t
\vert r-s\vert^{-\beta_0}\gamma(
X_r-X_s)drds \overset{d}{=}\int_{0}^{t/2}\!\!\int_0^{t/2}
\vert r-s\vert^{-\beta_0}\gamma(
X_r-\tilde X_s)drds. 
\end{align*}
By  Remark 5.7 in \cite{Song}, under the condition \eqref{Dalang},
$$ \E\exp\left(\lambda\int_{0}^{t}\!\!\int_0^{t}
\vert r-s\vert^{-\beta_0}\gamma(
X_r-\tilde X_s)drds\right)<\infty \text{ for all } \lambda >0. $$
Hence \eqref{bound-12} still holds under the condition \eqref{Dalang}, and therefore the upper bound in \eqref{bound-3''} is obtained. The proof is concluded. 
\end{proof}

\section{Feynman-Kac large deviation for stable process}

 In this section, we will obtain a Feynman-Kac large deviation result (Proposition \ref{prop-fk} below) for symmetric $\alpha$-stable process, which is a space-time extension of Lemma 6 in \cite{CLR} and will play a critical role in the derivation of our main result.   In \cite{CHSX} a similar result for Brownian motion was obtained (Proposition 3.1 in that paper) in order to get the precise moment Lyapunov exponent for the Stratonovich solution of heat equation.   The approach in \cite{CHSX} heavily depends on the local property of the Laplacian operator and the property of Brownian motion such as the continuity of paths and the Gaussian tail probability, and hence cannot be adapted to our situation, as the fractional Laplacian is a non-local operator, the stable process is a pure jump process, and the stable distribution is fat-tailed. Inspired by the idea in \cite{CLR}, instead of considering the stable process itself, we shall consider the stable process restricted in bounded domains by taking its image of quotient map, which will be elaborated below.

 Fix a positive number $M$. Let $\mathbb T_M^d=\R^d/M\mathbb Z^d$ be the $d$-dimensional torus  and $X^M_t$ be the image of $X_t$ under the quotient map from $\R^d$ to  $\mathbb T_M^d$. Then, $X^M$ is a Markov process with independent increments on $\mathbb T_M^d$, and its associated Dirichlet form is given by 
\begin{equation}\label{dirichlet}
\mathcal E_{\alpha,M}(f,f):= \frac1{M^{d+\alpha}}\sum_{k\in \mathbb Z^d}|k|^\alpha|\widehat f(k)|^2,
\end{equation}
where $\widehat f$ denotes the usual Fourier transform for functions on $\mathbb T_M^d$, i.e., for $k\in \mathbb Z^d$,
$$ \widehat f(k):=\int_{\mathbb T_M^d} f(x)e^{-2\pi i k\cdot x/M} dx=\int_{[0,M]^d} f(x)e^{-2\pi i k\cdot x/M} dx.$$
Here the function $f$ on $\mathbb T_M^d$ is considered as an $M$-periodic function (with the same symbol $f$) on $\R^d$, which means that $f(x+kM)=f(x)$ for any $k\in \mathbb Z^d$.
Let 
\begin{equation}
\mathcal F_{\alpha,M}:=\{f\in L^2(\mathbb T_M^d): \|f\|_{2,\mathbb T_M^d}=1 \text{ and } \mathcal E_{\alpha, M}(f,f)<\infty\},
\end{equation}
where $$\|f\|_{2,\mathbb T_M^d}=\left(\langle f,f\rangle_{2,\mathbb T_M^d}\right)^{1/2}:=\left(\int_{\mathbb T_M^d} |f(x)|^2dx\right)^{1/2}=\left(\int_{[0,M]^d}|f(x)|^2dx\right)^{1/2}$$ is the $L^2$-norm on $\mathbb T_M^d$ endowed with the Lebesgue measure.

\begin{proposition}\label{prop-fk}
Let $f(t,x): [0,1]\times\mathbb T_M^d\to \R$ be a continuous function.  Then, we have 
\begin{equation}\label{fk}
\lim_{t\to\infty} \frac1t\log\E\left[\exp\left(\int_0^t f(\frac st, X_s^M)ds \right)\right]=\int_0^1 \lambda_M(f(s,\cdot))ds,
\end{equation}
where $\lambda_M(f):=\sup\limits_{g\in\mathcal F_{\alpha,M}}\Big\{\langle g,fg\rangle_{2,\mathbb T_M^d}-\mathcal E_{\alpha, M}(g, g)\Big\}.$
\end{proposition}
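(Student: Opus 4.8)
The plan is to reduce the time-inhomogeneous exponential functional to the time-homogeneous case by a step-function approximation in the slow time variable, and then to use the torus version of the classical Feynman--Kac/principal-eigenvalue asymptotics. The first ingredient is the time-homogeneous statement, which in the time-homogeneous case is (the torus form of) Lemma~6 in \cite{CLR}: for every continuous $V:\mathbb T_M^d\to\R$,
\[
\lim_{T\to\infty}\frac1T\log\sup_{x\in\mathbb T_M^d}\E_x\!\Big[\exp\!\Big(\int_0^T V(X_s^M)\,ds\Big)\Big]
=\lim_{T\to\infty}\frac1T\log\inf_{x\in\mathbb T_M^d}\E_x\!\Big[\exp\!\Big(\int_0^T V(X_s^M)\,ds\Big)\Big]
=\lambda_M(V).
\]
One way to see this is spectral: the operator $\mathcal A_V$ generating the Feynman--Kac semigroup $P_T^V$ is self-adjoint and bounded above on $L^2(\mathbb T_M^d)$, with quadratic form $\langle g,\mathcal A_V g\rangle=\langle g,Vg\rangle_{2,\mathbb T_M^d}-\mathcal E_{\alpha,M}(g,g)$, so $\|P_T^V\|_{L^2\to L^2}=e^{\lambda_M(V)T}$ with $\lambda_M(V)=\sup\sigma(\mathcal A_V)$ exactly the variational quantity in the Proposition; since $\mathbb T_M^d$ is compact this is a simple eigenvalue with a strictly positive eigenfunction $\phi_0$ (Perron--Frobenius, using that the $\alpha$-stable transition density on the torus is continuous and strictly positive). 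Comparing $\sup_x(P_T^V1)(x)$ and $\inf_x(P_T^V1)(x)$ with $\langle 1,P_T^V1\rangle_{2,\mathbb T_M^d}\asymp e^{\lambda_M(V)T}$ and with $\|P_T^V\|_{L^2\to L^2}$, using the boundedness and strict positivity of that density (ultracontractivity and irreducibility), yields the two displayed limits. I also record the Lipschitz estimate $|\lambda_M(V_1)-\lambda_M(V_2)|\le\|V_1-V_2\|_\infty$, which is immediate from the variational formula for $\lambda_M$.

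For the upper bound in \eqref{fk}, fix $n\in\mathbb N$, put $J_i=[(i-1)/n,i/n]$ and $f_{n,i}^+(x)=\sup_{u\in J_i}f(u,x)$; this is continuous on $\mathbb T_M^d$ because $f$ is uniformly continuous on the compact set $[0,1]\times\mathbb T_M^d$. Since $f(s/t,x)\le f_{n,i}^+(x)$ whenever $(i-1)t/n\le s\le it/n$, applying the Markov property of $X^M$ at the times $it/n$ for $i=n-1,\dots,1$ and bounding each conditional expectation by its supremum over the compact state space gives
\[
\E\!\Big[\exp\!\Big(\int_0^t f(\tfrac st,X_s^M)\,ds\Big)\Big]
\le\prod_{i=1}^n\ \sup_{x\in\mathbb T_M^d}\E_x\!\Big[\exp\!\Big(\int_0^{t/n}f_{n,i}^+(X_s^M)\,ds\Big)\Big].
\]
Taking $\frac1t\log(\cdot)$, letting $t\to\infty$, and invoking the time-homogeneous result with $T=t/n$, we obtain $\limsup_{t\to\infty}\frac1t\log\E[\,\cdot\,]\le\frac1n\sum_{i=1}^n\lambda_M(f_{n,i}^+)$. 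Since $\sup_i\sup_{s\in J_i}\|f_{n,i}^+-f(s,\cdot)\|_\infty\to0$ as $n\to\infty$ by uniform continuity, the Lipschitz bound for $\lambda_M$ together with continuity of $s\mapsto\lambda_M(f(s,\cdot))$ and Riemann-sum convergence gives $\frac1n\sum_{i=1}^n\lambda_M(f_{n,i}^+)\to\int_0^1\lambda_M(f(s,\cdot))\,ds$, and the upper bound follows.

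The lower bound is entirely symmetric: with $f_{n,i}^-(x)=\inf_{u\in J_i}f(u,x)$ one has $f(s/t,x)\ge f_{n,i}^-(x)$ on the $i$-th block, and the same Markov-property argument, now bounding each conditional expectation from below by its infimum over $\mathbb T_M^d$, yields $\E[\exp(\int_0^t f(\tfrac st,X_s^M)\,ds)]\ge\prod_{i=1}^n\inf_{x}\E_x[\exp(\int_0^{t/n}f_{n,i}^-(X_s^M)\,ds)]$; hence $\liminf_{t\to\infty}\frac1t\log\E[\,\cdot\,]\ge\frac1n\sum_i\lambda_M(f_{n,i}^-)$, and letting $n\to\infty$ finishes the proof. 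I expect the main technical point to be the uniform-in-starting-point form of the time-homogeneous result — that both $\sup_x$ and $\inf_x$ of $\E_x[\exp(\int_0^TV(X_s^M)\,ds)]$ grow at exponential rate exactly $\lambda_M(V)$ — which is precisely where the compactness of $\mathbb T_M^d$ and the boundedness and strict positivity of the $\alpha$-stable heat kernel on the torus enter, replacing the Gaussian-tail and path-continuity arguments available for Brownian motion in \cite{CHSX}. Everything else is the routine bookkeeping of a Riemann-sum approximation combined with the continuity of $\lambda_M$ in the potential.
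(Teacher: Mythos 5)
Your proposal is correct and takes essentially the same route as the paper: discretize the slow time variable, chain the Markov property at the block endpoints to bound the functional above and below by products over blocks of $\sup_x$ and $\inf_x$ of time-homogeneous Feynman--Kac expectations, identify the per-block exponential growth rate with $\lambda_M$ through the self-adjoint semigroup on the compact torus (where the bounded, strictly positive density of $X_1^M$ is the crucial input), and then pass to $\int_0^1\lambda_M(f(s,\cdot))ds$ by uniform continuity of $f$ and the Lipschitz property of $\lambda_M$. The only minor differences are cosmetic: you work with the block envelopes $f_{n,i}^{\pm}$ rather than the paper's sampled step functions followed by a final approximation step, and you justify the uniform-in-starting-point homogeneous asymptotics via compactness of the semigroup and a Perron--Frobenius principal eigenfunction, whereas the paper gets the same two-sided bound directly from the spectral measure together with Jensen's inequality and the positivity of the kernel, without ever needing the principal eigenfunction to exist.
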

\begin{proof}
Let $\{0=s_0<s_1<\dots<s_{n-1}<s_{n}=1\}$ be a uniform partition of the interval $[0,1]$. First, we consider the functions of the form 
\[
f(s,x)=\sum_{i=0}^{n-1} f_i(x) I_{[s_i, s_{i+1})}(s)+f_{n-1}(x)I_{\{1\}}(s)\,.
\]
By the Markov property, we have 
\begin{align*}
&\E\left[\exp\left(\int_0^{t} f(\frac st,X_s^M)ds\right)\right]=\E\left[\exp\left(\int_0^{\frac tn} f(\frac st,X_s^M)ds\right)\exp\left(\int_{\frac tn}^{t} f(\frac st,X_s^M)ds\right)\right]\\
&=\E\left[\exp\left(\int_0^{\frac tn} f_0(X_s^M)ds\right)\E_{X_{\frac tn}^M}\left[\exp\left(\int_0^{(1-\frac1n)t} f(\frac st+\frac1n,X_s^M)ds\right)\right]\right]\\
&\ge\E\left[\exp\left(\int_0^{\frac tn} f_0(X_s^M)ds\right); |X_{\frac tn}^M|<\delta\right] \inf_{|x|<\delta}\E_{x}\left[\exp\left(\int_0^{(1-\frac1n)t} f(\frac st+\frac1n,X_s^M)ds\right)\right],
\end{align*}
where $\E_x$ denotes the expectation with respect to the stable process staring from $x$.

Repeating the above procedure, we can get
\begin{align}\label{e1.5'}
&\prod_{i=0}^{n-1}\inf_{|x|<\delta} \E_x\left[\exp\left(\int_0^{\frac tn} f_i(X_s^M)ds\right);|X_{\frac tn}^M|<\delta\right]
\le \E\left[\exp\left(\int_0^{t} f(\frac st,X_s^M)ds\right)\right].
\end{align}
Similarly,  we have 
\begin{align}\label{e1.6'}
\E\left[\exp\left(\int_0^{t} f(\frac st,X_s^M)ds\right)\right]
\le \prod_{i=0}^{n-1}\sup_{x\in \mathbb T_M^d} \E_x\left[\exp\left(\int_0^{\frac tn} f_i(X_s^M)ds\right)\right].
\end{align}
 
First, we show that 
\begin{equation}\label{lower}
\lim_{t\to\infty}\frac1t \log\inf_{|x|<\delta} \E_x\left[\exp\left(\int_0^{t} f_i(X_s^M)ds\right);|X_{t}^M|<\delta\right]\ge \lambda_M(f_i).
\end{equation}
By boundedness of $f_i$ and the Markov property, we have
\begin{align}
&\E_x\left[\exp\left(\int_0^{t} f_i(X_s^M)ds\right);|X_{t}^M|<\delta\right]\ge C \E_x\left[\exp\left(\int_1^{t-1} f_i(X_s^M)ds\right);|X_{t}^M|<\delta\right]\notag\\
&=C\int_{\mathbb T_M^d}\bar p(y-x)\E_y\left[\exp\left(\int_0^{t-2} f_i(X_s^M)ds\right)\E_{X_{t-2}^M}\left[ I_{[|X_{1}^M|<\delta]}\right]\right] dy, \label{e1.6}
\end{align}
where $\bar p(y)$ is the density function of $X_1^M$. Note that $\bar p(y)$ is strictly positive and continuous on $\mathbb T_M^d$, and Then, there exists $\varepsilon>0$ such that $\inf_{y\in \R^M}\bar p(y)\ge \varepsilon$ and consequently $\inf_{x\in\R^M}\E_{x}\left[ I_{[|X_{1}^M|<\delta]}\right]\ge \varepsilon \delta^d.$ Therefore,
\begin{align} \label{e1.7}
\E_x\left[\exp\left(\int_0^{t} f_i(X_s^M)ds\right);|X_{t}^M|<\delta\right]\ge C\varepsilon^2 \delta^d \int_{\mathbb T_M^d}\E_y\left[\exp\left(\int_0^{t-2} f_i(X_s^M)ds\right)\right] dy. 
\end{align}
On the other hand, for any $g\in \mathcal F_{\alpha, M}$,   
\begin{align}\label{e1.8}
&\int_{\mathbb T_M^d}\E_y\left[\exp\left(\int_0^{t-2} f_i(X_s^M)ds\right)\right] dy\notag\\
&\ge \|g\|_{\infty}^{-2}\int_{\mathbb T_M^d}g(y)\E_y\left[\exp\left(\int_0^{t-2} f_i(X_s^M)ds\right) g(X_{t-2}^M) \right]dy\notag\\
&=\|g\|_{\infty}^{-2} \langle g, e^{-(t-2)(T_{\alpha,M}-V_{f_i})} g\rangle_{2, \mathbb T_M^d},
\end{align}
where  in the last step $T_{\alpha, M}$ is the self-adjoint operator associated with the Dirichlet form $\mathcal E_{\alpha, M}$,  $V_f$ is the operator of the multiplication of the function $f$, and the equality follows from \cite[Lemma 5]{CLR}.  By spectral representation theory, there exists a probability measure $\mu_g(d\lambda)$ such that 
\begin{align}\label{e1.9}
\langle g, f_ig\rangle_{\alpha, M}-\mathcal E_{\alpha,M}(g,g)=\langle g, -(T_{\alpha,M}-V_{f_i})g\rangle_{\alpha, M}=\int_{-\infty}^\infty \lambda \mu_g(d\lambda),
\end{align} and
\begin{align}\label{e1.10}
\langle g, e^{-(t-2)(T_{\alpha,M}-V_{f_i})}g\rangle_{\alpha, M}=\int_{-\infty}^\infty e^{-(t-2)\lambda} \mu_g(d\lambda)\ge \exp\left((t-2)\int_{-\infty}^\infty\lambda \mu_g(d\lambda) \right).
\end{align}
Combining (\ref{e1.9}) and (\ref{e1.10}), we have
\begin{align}\label{e1.11}
\liminf_{t\to\infty}\frac1t\log \langle g, e^{(t-2)(T_{\alpha,M}-V_{f_i})}g\rangle_{\alpha, M}\ge \langle g, f_ig\rangle_{\alpha, M}-\mathcal E_{\alpha,M}(g,g),
\end{align}
and then, by choosing $g$ arbitrarily, (\ref{lower}) follows from  (\ref{e1.7}), (\ref{e1.8}) and (\ref{e1.11}).

Now we show that
\begin{equation}\label{upper}
\limsup_{t\to\infty}\frac1t \log\sup_{x\in \mathbb T_M^d} \E_x\left[\exp\left(\int_0^{t} f_i(X_s^M)ds\right)\right]\le \lambda_M(f_i).
\end{equation}
Actually, by the uniform boundedness of $f_i$ on $\mathbb T_M^d$ and the Markov property of $X^M$,
\begin{align*}
&\E_x\left[\exp\left(\int_0^{t} f_i(X_s^M)ds\right)\right]\le C\E_x\left[\exp\left(\int_1^{t} f_i(X_s^M)ds\right)\right]\\
&\qquad =C\int_{R_M}\bar p(y-x)\E_y\left[\exp\left(\int_0^{t-1} f_i(X_s^M)ds\right)\right] dy\\
&\qquad =C \langle  \bar p, e^{-(t-1)\langle T_{\alpha,M}-V_{f_i})}1\rangle_{2, R_M}.
\end{align*}
By spectral representation, for any $g\in \mathcal F_{\alpha, M}$,
\begin{align*}
\langle g, e^{-(t-1)(T_{\alpha,M}-V_{f_i})}g\rangle_{\alpha, M}=\int_{-\sigma_0}^{\infty} e^{-(t-1)\lambda} \mu_g(d\lambda)\le e^{(t-1)\sigma_0},
\end{align*}
where $-\sigma_0=-\lambda_M(f_i)$ is the infimum  of the spectrum of the operator $T_{\alpha,M}-V_{f_i} $. 
Hence 
\[\limsup_{t\to\infty}\frac1t\log\E_x\left[\exp\left(\int_0^{t} f_i(X_s^M)ds\right)\right]\le \lambda_M(f_i).\]
Combining (\ref{e1.5'}), (\ref{e1.6'}), (\ref{lower}) and (\ref{upper}), we have
\begin{equation}\label{e1.15}
\lim_{t\to\infty}\frac1t \log \E\left[\exp\left(\int_0^{t} f(\frac st,X_s^M)ds\right)\right]=\sum_{i=0}^{n-1}\lambda_M(f_i).
\end{equation}
Finally, for general continuous function $f(s,x)$ on $[0,1]\times \mathbb T_M^d$, let $$f_n(s,x)=\sum_{i=0}^{n-1} f(s_i,x) I_{[s_i, s_{i+1})}(s)+f(s_{n-1},x)I_{\{1\}}(s).$$ Then, by the uniform continuity of $f$ on $[0,1]\times \mathbb T_M^d$, $f_n$ converges to $f$ uniformly. By letting $n$ go to infinity in (\ref{e1.15}), we can obtain  (\ref{fk}). \hfill
\end{proof}
\medskip 

In the meantime, the  lower bound in \eqref{fk} also holds for the original stable process $X$.
\begin{proposition} \label{fklb}For the stable process $X$ on the whole $\R^d,$ if we assume that $f(s,x)$ is continuous in $(s,x)$ on $[0,1]\times \R^d$ and that the family  $\{f(\cdot, x), x\in \R^d\}$ of functions is equicontinuous, Then, we can obtain the lower bound 
\begin{equation}\label{fk-lower}
\liminf_{t\to\infty} \frac1t\log\E\left[\exp\left(\int_0^t f(\frac st, X_s)ds \right)\right]\ge\int_0^1 \lambda(f(s,\cdot))ds,
\end{equation}
where $\lambda(f)=\sup_{g\in\mathcal F_{\alpha}}\Big\{\langle g,fg\rangle_{2,\R^d}-\mathcal E_{\alpha}(g, g)\Big\}.$
\end{proposition}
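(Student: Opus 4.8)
The plan is to rerun the lower‑bound half of the proof of Proposition~\ref{prop-fk} directly for $X$ on all of $\R^d$; the only genuinely new point is that the $\alpha$‑stable transition density $p_1(x,y)$ is no longer bounded below on $\R^d\times\R^d$, but it is still continuous and strictly positive, hence bounded below on compacts, which is enough provided one localizes the test functions (compact support) and the starting points (a fixed bounded set). I would fix the arbitrary starting point $x_0$, take $K$ to be a closed ball centered at $x_0$, and, after replacing $f$ by $f+\|f\|_\infty$ (which merely shifts both sides of \eqref{fk-lower} by $\|f\|_\infty$), assume $f\ge 0$ and bounded. First I would reduce to step functions in time: if $\omega$ is a common modulus of continuity of the equicontinuous family $\{f(\cdot,x):x\in\R^d\}$ and $f_n(s,x)=\sum_{i=0}^{n-1}f(\tfrac in,x)I_{[i/n,(i+1)/n)}(s)$, then for every path $\bigl|\int_0^t f(\tfrac st,X_s)ds-\int_0^t f_n(\tfrac st,X_s)ds\bigr|\le t\,\omega(1/n)$, and $|\lambda(f(s,\cdot))-\lambda(f_n(s,\cdot))|\le\omega(1/n)$ for each $s$ (immediate from $|\lambda(\phi)-\lambda(\psi)|\le\|\phi-\psi\|_\infty$); so it suffices to prove \eqref{fk-lower} for each $f_n$ and let $n\to\infty$.

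With $f_n$ fixed, the Markov property gives, exactly as in \eqref{e1.5'},
\[
\E_{x_0}\!\left[\exp\!\left(\int_0^t f_n(\tfrac st,X_s)ds\right)\right]\ \ge\ \prod_{i=0}^{n-1}\ \inf_{x\in K}\E_x\!\left[\exp\!\left(\int_0^{t/n} f(\tfrac in,X_s)ds\right);X_{t/n}\in K\right],
\]
so everything reduces to a single‑block estimate: for bounded continuous $W$ on $\R^d$ and any $g\in\mathcal F_\alpha$ with compact support $D:=\supp g\supset K$,
\[
\liminf_{\tau\to\infty}\tfrac1\tau\log\ \inf_{x\in K}\E_x\!\left[\exp\!\left(\int_0^{\tau}W(X_s)ds\right);X_\tau\in K\right]\ \ge\ \langle g,Wg\rangle_{2,\R^d}-\mathcal E_\alpha(g,g).
\]
Granting this, the product above is at least $\bigl(\prod_i C_{g_i}\bigr)\exp\!\bigl(\tfrac tn\sum_i[\langle g_i,f(\tfrac in,\cdot)g_i\rangle-\mathcal E_\alpha(g_i,g_i)]\bigr)$ with $t$‑independent constants $C_{g_i}>0$; optimizing over compactly supported $g_i$ — which still recovers $\lambda(f(\tfrac in,\cdot))$, since compactly supported elements are dense in $\mathcal F_\alpha$ and $W$ is bounded — yields $\liminf_t\tfrac1t\log\E_{x_0}[\cdots]\ge\tfrac1n\sum_i\lambda(f(\tfrac in,\cdot))=\int_0^1\lambda(f_n(s,\cdot))ds$.

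To prove the single‑block estimate I would replace $g$ by $|g|$ (legitimate since $\mathcal E_\alpha(|g|,|g|)\le\mathcal E_\alpha(g,g)$ and $\langle|g|,W|g|\rangle=\langle g,Wg\rangle$), so $g\ge 0$; strip off the first and last unit time intervals using $\|W\|_\infty<\infty$ and the Markov property; bound $p_1(x,y)\ge c>0$ on $K\times D$ and $\P_z(X_1\in K)\ge c'>0$ for $z\in D$ by continuity and strict positivity of $p_1$ on compacts; and use $g\le\|g\|_\infty\mathbf 1_D$ together with positivity of the Feynman--Kac kernel to obtain
\[
\inf_{x\in K}\E_x\!\left[\exp\!\left(\int_0^{\tau}W(X_s)ds\right);X_\tau\in K\right]\ \ge\ C\,\bigl\langle g,\,e^{-(\tau-2)(T_\alpha-V_W)}g\bigr\rangle_{2,\R^d},
\]
where $T_\alpha=(-\Delta)^{\alpha/2}$, $V_W$ is multiplication by $W$, and the identification of the right side is the $\R^d$ analogue of \cite[Lemma~5]{CLR} (Feynman--Kac together with self‑adjointness of the stable semigroup on $L^2(\R^d)$). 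Since $T_\alpha-V_W$ is self‑adjoint and bounded below, the spectral theorem and Jensen's inequality give $\langle g,e^{-(\tau-2)(T_\alpha-V_W)}g\rangle\ge\exp\!\bigl((\tau-2)[\langle g,Wg\rangle-\mathcal E_\alpha(g,g)]\bigr)$ because $\|g\|_2=1$ and $\langle g,T_\alpha g\rangle=\mathcal E_\alpha(g,g)$; taking $\tfrac1\tau\log\liminf$ then gives the claim.

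The main difficulty will be the single‑block estimate, where one must simultaneously localize in space so that all constants are independent of $\tau$ and uniform over the starting point in $K$ (this is precisely where continuity and strict positivity of $p_1$ on compacts replace the global lower bound available on the torus), keep the test function compactly supported so that the density step recovers $\lambda$ rather than something strictly smaller, and then carry out the spectral/Jensen estimate for $T_\alpha-V_W$. The time discretization (driven by the equicontinuity hypothesis) and the density of compactly supported elements of $H^{\alpha/2}$ in $\mathcal F_\alpha$ are routine.
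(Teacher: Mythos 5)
Your proposal is correct and follows essentially the same route as the paper's proof: reduce to time step functions, chain the Markov property over blocks as in \eqref{e1.5'}, localize to a compact set where the transition density $p_1$ is bounded below, test against compactly supported $g\in\mathcal F_\alpha$ (dense, so the supremum still gives $\lambda$), and conclude via the Feynman--Kac semigroup, spectral representation and Jensen as in \eqref{e1.7}--\eqref{e1.11}, followed by the approximation argument in $n$. You merely make explicit the details the paper leaves as ``similar argument'' (the handling of the endpoint indicator through $g\le\|g\|_\infty\mathbf 1_D$ and $\inf_{z\in D}\P_z(X_1\in K)>0$, and the boundedness of $f$, which the paper also uses implicitly and which holds in its application).
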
 
\begin{proof} The proof is similar to the lower bound part of  the proof for Proposition \eqref{fk}. We shall 
only  sketch the idea.

We still start with the functions of the form $f(s,x)=\sum_{i=0}^{n-1} f_i(x) I_{[s_i, s_{i+1})}(s)+f_{n-1}(x)I_{\{1\}}(s)$. Fix a compact set $D\subset\R^d$, Then, there exists a positive $\varepsilon$ such that the density function $p(y)$ of $X_1$ is  bigger than $\varepsilon$ for all $y\in D$. For any $g\in \mathcal F_\alpha$ with support inside $D$, using a similar argument as (\ref{e1.7}) --  (\ref{e1.11}), we can get 
\[\liminf_{t\to\infty}\frac1t\log\E_x\left[\exp\left(\int_0^{t} f_i(X_s)ds\right);|X_{t}|<\delta\right]\ge \langle g, f_ig\rangle_{\alpha,\R^d}-\mathcal E_{\alpha}(g,g).\]
Therefore, for any $g\in\mathcal F_\alpha$ with compact support, we have
\begin{equation*}
\liminf_{t\to\infty} \frac1t\log\E\left[\exp\left(\int_0^t f_i(X_s)ds \right)\right]\ge\langle g, f_ig\rangle_{\alpha,\R^d}-\mathcal E_{\alpha}(g,g),
\end{equation*}
and hence
\begin{equation*}
\liminf_{t\to\infty} \frac1t\log\E\left[\exp\left(\int_0^t f_i(X_s)ds \right)\right]\ge\lambda(f_i).
\end{equation*}
Finally, (\ref{fk-lower}) follows from a limiting argument.
 \end{proof}

\section{A variational inequality}
In this section, we will establish a lower bound for $\|u^{\rho}(t,x)\|_p$ for $p\ge1, \rho\in[0,1]$, where $u^\rho$ 
is given by  \eqref{urho} when $\rho\in[0,1)$ under the condition \eqref{Dalang'} and $u^1(t,x)$ is the Skorohod solution $\tilde u(t,x)$ under the condition \eqref{Dalang}. This will be used to obtain the lower bound in Theorem \ref{pa}.

First let us introduce some notations by recalling the Dalang's approach (see \cite{Dalang99}) of defining stochastic integral with respect to the Gaussian noise $\dot W$.   Let $\cD(\R^{d+1})$ be the set of smooth functions on $\R^{d+1}$ with compact support, and $\cH$ be the Hilbert space spanned by $\cD(\R^{d+1})$
 under the inner product 
\begin{equation}
\langle \varphi, \psi\rangle_\cH:=\int_{\R^{2}}\int_{\R^{2d}} |r-s|^{-\beta_0}\gamma(x-y)\varphi(r,x)\psi(s,y) drdsdxdy, \, \forall\, \varphi, \psi\in \cD(\R^{d+1}).
\end{equation}
In the probability space $(\Omega, \cF, \mathbb P)$, let $W=\{W(h), h\in \cH\}$ be an isonormal Gaussian process with covariance function give by $\E[W(h)W(g)]=\langle h, g\rangle_\cH.$
We also write, for $h\in \cH$,
$$W(h)=\int_\R\int_{\R^d} h(s,x) W(ds,dx). $$
Denote  the Fourier transforms of $|s|^{-\beta_0}$ and $\gamma(x)$ by $\mu_0(d\tau)$ and $\mu(d\xi)$, respectively, then
\begin{align}
&\mu_0(d\tau)=C_{\beta_0}|\tau|^{\beta_0-1}d\tau; &\label{ftmu0}\\
&\mu(d\xi)=\begin{cases}\label{ftmu}
 C_{\beta,d}|\xi|^{\beta-d}d\xi,  &\text{ for  }\gamma(x)=|x|^{-\beta}, \\
 \prod_{j=1}^d C_{\beta_j}|\xi|^{\beta_j-1}d\xi,  &\text{ for }  \gamma(x)=\prod_{j=1}^d|x_j|^{-\beta_j}.
 \end{cases}
 \end{align}
 The Parseval's identity provides an alternative representation for the inner product, $$\E[W(\varphi)W(\psi)]=\langle \varphi, \psi\rangle_\cH=\int_\R\int_{\R^d}\varphi(\tau, \xi)\overline{\widehat \psi(\tau,\xi)}\mu_0(d\tau)\mu(d\xi), \text{ for } \varphi, \psi\in \cS(\R^{d+1}).$$

 With the above notations    \eqref{Dalang} is equivalent to the following general
 form of the Dalang's condition
 \begin{equation}\label{Dalang-add}
\int_{\R^d} \frac1{1+|\xi|^{\alpha}} \mu(d\xi)<\infty,
\end{equation} 
and   \eqref{Dalang'} is equivalent to 
\begin{equation}\label{Dalang'general-add}
\int_{\R^d} \frac{1}{1+|\xi|^{\alpha(1-\beta_0)}}\mu(d\xi)<\infty\,. 
\end{equation}
 
Now we recall the approximation procedure used in \cite{hn09, hns, Song}, which we shall use in the proof of the main result in this section. Denote $g_\delta(t):=\frac1\delta I_{[0,\delta]}(t)$ for $t\ge 0$ and $p_\varepsilon(x)=\frac1{\varepsilon^{d}}p(\frac{x}{\varepsilon})$ for $x\in \R^d$, 
where $p(x)\in \cD(\R^d)$ is a symmetric probability 
density function and its Fourier transform $\widehat p(\xi)\ge 0$  for all $\xi\in\R^d.$  
For positive numbers $\varepsilon$ and $\delta$, define
\begin{align} \label{apprw}
\dot{W}^{\epsilon ,\delta
}(t,x):=\int_{0}^{t}\int_{\mathbb{R}^{d}}g_{\delta
}(t-s)p_{\epsilon }(x-y)W(ds,dy)=W(\phi_{t,x}^{\varepsilon, \delta}),
\end{align}
where 
$$\phi_{t,x}^{\e,\delta}(s,y):=g_\delta(t-s)p_\e(x-y) \cdot I_{[0,t]}(s).$$
Consider the following approximation of (\ref{pam})
\begin{equation}\label{appspde}
\begin{cases}
u^{\varepsilon, \delta}(t,x)=-(-\Delta)^{\frac \alpha2} u^{\varepsilon, \delta}(t,x)+u^{\varepsilon, \delta}(t,x)\dot W^{\varepsilon, \delta}(t,x),\\
u^{\varepsilon,\delta}(0,x)=u_0(x).
\end{cases}
\end{equation}
Then, Feynman-Kac formula for the Stratonovich solution $u^{\varepsilon, \delta}$ is
\[u^{\varepsilon, \delta}(t,x)=\E_X\left[u_0(X_t^x)\exp\left(\int_0^t \dot W^{\varepsilon, \delta}(r,X_{t-r}^x)dr \right)\right],\] 
and the Feynman-Kac formula for the Skorohod solution $\tilde u^{\e,\de}(t,x)$ is 
\begin{align*}
 \tilde u^{\varepsilon, \delta}(t,x)=&\E_X\Bigg[u_0(X_t^x)\exp\bigg(\int_0^t \dot W^{\varepsilon, \delta}(r,X_{t-r}^x)dr-\frac12\int_{\R^{d+1}} |\mathcal F\Phi_{t,x}^{\varepsilon,\delta}(\tau,\xi)|^2 \mu_0(d\tau)\mu(d\xi)\bigg) \Bigg],
 \end{align*}
where  
\begin{equation}\label{Phit}
\Phi_{t,x}^{\varepsilon,\delta}(u,y):=\int_0^{t} g_\delta(t-u-s)p_\varepsilon(X_s^x-y)ds\cdot I_{[0,t]}(u).
\end{equation}
Notet that
$$  \int_0^t \dot W^{\varepsilon, \delta}(r,X_{t-r}^x)dr =\int_{\R}\int_{\R^d}\Phi_{t,x}^{\e,\delta}(u,y)W(du,dy), $$
by stochastic Fubini's theorem.

For $ \rho\in [0,1]$, define the following random Hamiltonian, 
$$H_{\e,\de}^{\rho}(t,x):=
\int_0^t \dot W^{\varepsilon, \delta}(r,X_{t-r}^x)dr -\frac\rho2 \int_{\R^{d+1}} |\mathcal F\Phi_t^{\varepsilon,\delta}(\tau,\xi)|^2 \mu_0(d\tau)\mu(d\xi), $$
and denote
\begin{equation}\label{urho'}
u_{\e,\de}^{\rho}(t,x):=\E_X\left[\exp\left(H_{\e,\de}^{\rho}(t,x)\right)\right].
\end{equation}
Then, for all fixed $(t,x)\in \R_+\times\R^{d}$,  under the condition \eqref{Dalang'},  for all $\rho\in[0,1]$, $H_{\e,\de}^{\rho}(t,x)$ converges to $H^\rho(t,x)$ given in \eqref{hrho} (see Theorem 4.1 in \cite{Song})
and  $u^\rho_{\e,\de}(t,x)$ converges to $ u^{\rho}(t,x):=\E_X\left[\exp\left(H^{\rho}(t,x)\right)\right]$  in $L^p$ for all $p\ge1$ (see Theorem 4.6 in \cite{Song}). Under the less restricted condition \eqref{Dalang}, when $\rho=1$, $u^1_{\e,\de}(t,x)$ converges to  the Skorohod solution $\tilde u(t,x)$ of \eqref{pam} in $L^p$ for all $p\ge1$ (see Theorem 5.6 in \cite{Song}).
  
\bigskip

The following is the main result in this section.
\begin{proposition}\label{prop3.1}
We assume  one of the following conditions
\begin{enumerate}
\item[(i)]  The condition \eqref{Dalang'} is  satisfied and $\rho\in [0,1]$. 
\item[(ii)]   Dalang's condition \eqref{Dalang}  is satisfied and    $\rho=1$. 
\end{enumerate}
Let $p\ge 1$, and when $p=1$ we assume $\rho\in [0,1).$ Then, for any $(t,x)\in \R^+\times \R^d$, 
\begin{align*}
&\left(\E|u^{\rho}(t,x)|^p\right)^{1/p}\\
&\ge \sup_{g\in \cS_H(\R^{d+1})} \E_X \left[\exp\left(\int_0^t (\widetilde{\cF} g)(s,X_s)ds -\frac{1}{2(p-\rho)}\int_{\R^{d+1}} |g(\tau,\xi)|^2 \mu_0(d\tau) \mu(d\xi)\right)\right],
\end{align*}
where
\begin{equation}\label{schwartz'}
\cS_H(\R^{d+1})=\Big\{g\in \cS(\R^{d+1}); \, g(-\tau, -\xi)=\overline{g(\tau,\xi)}\Big\},
\end{equation}
and 
 \begin{equation}\label{fourier'}
 (\widetilde{\cF} g)(s,x)=\int_{\R^{d+1}} e^{-2\pi i(\tau s+\xi\cdot x)} g(\tau, \xi) \mu_0(d\tau) \mu(d\xi).
 \end{equation}
\end{proposition}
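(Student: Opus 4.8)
The plan is to pass through the smooth approximations $u^{\rho}_{\e,\de}(t,x)=\E_X[\exp(H^{\rho}_{\e,\de}(t,x))]$ of \eqref{urho'}, for which $\Phi^{\e,\de}_{t,x}\in\cH$ and all Gaussian manipulations are legitimate, and then to let $\e,\de\to0$, using that $u^{\rho}_{\e,\de}(t,x)\to u^{\rho}(t,x)$ in $L^{p}$, hence $\|u^{\rho}_{\e,\de}(t,x)\|_{p}\to\|u^{\rho}(t,x)\|_{p}$, under the stated hypotheses. Fix a real $\varphi\in\cS(\R^{d+1})$, so $\widehat\varphi\in\cS_H(\R^{d+1})$, $\varphi\in\cH$, and $\|\varphi\|_{\cH}^{2}=\int_{\R^{d+1}}|\widehat\varphi(\tau,\xi)|^{2}\mu_{0}(d\tau)\mu(d\xi)$; by the approximation recalled before the statement together with stochastic Fubini, $\langle\Phi^{\e,\de}_{t,x},\varphi\rangle_{\cH}\to\int_{0}^{t}(\widetilde{\cF}\widehat\varphi)(t-r,X^{x}_{r})\,dr$ as $\e,\de\to0$, uniformly over $\e,\de$ and over the path $X$. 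For $p>1$, Hölder's inequality with the weight $e^{W(\varphi)}$, then Fubini and the formula $\E_{W}[e^{W(\eta)}]=\exp(\tfrac12\|\eta\|_{\cH}^{2})$, give
\[
\|u^{\rho}_{\e,\de}(t,x)\|_{p}\ \ge\ \frac{\E\big[u^{\rho}_{\e,\de}(t,x)\,e^{W(\varphi)}\big]}{\big\|e^{W(\varphi)}\big\|_{p/(p-1)}}\ =\ e^{-\frac{1}{2(p-1)}\|\varphi\|_{\cH}^{2}}\,\E_{X}\Big[\exp\Big(\tfrac{1-\rho}{2}\|\Phi^{\e,\de}_{t,x}\|_{\cH}^{2}+\langle\Phi^{\e,\de}_{t,x},\varphi\rangle_{\cH}\Big)\Big];
\]
for $p=1$ (hence $\rho<1$) this is replaced by the exact identity $\|u^{\rho}_{\e,\de}(t,x)\|_{1}=\E[u^{\rho}_{\e,\de}(t,x)]=\E_{X}[\exp(\tfrac{1-\rho}{2}\|\Phi^{\e,\de}_{t,x}\|_{\cH}^{2})]$, i.e.\ the above with $\varphi\equiv0$ and no penalty.

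The essential step is to convert the penalty $\tfrac1{2(p-1)}$ into the required $\tfrac1{2(p-\rho)}$. The leftover ``diagonal'' term $\tfrac{1-\rho}{2}\|\Phi^{\e,\de}_{t,x}\|_{\cH}^{2}$ is nonnegative but cannot be discarded pointwise; instead it is reabsorbed by a second shift. For any $\psi\in\cH$ the inequality $\|\Phi^{\e,\de}_{t,x}-\psi\|_{\cH}^{2}\ge0$ gives $\tfrac{1-\rho}{2}\|\Phi^{\e,\de}_{t,x}\|_{\cH}^{2}\ge(1-\rho)\langle\Phi^{\e,\de}_{t,x},\psi\rangle_{\cH}-\tfrac{1-\rho}{2}\|\psi\|_{\cH}^{2}$, whence
\[
\|u^{\rho}_{\e,\de}(t,x)\|_{p}\ \ge\ \exp\Big(-\tfrac{1}{2(p-1)}\|\varphi\|_{\cH}^{2}-\tfrac{1-\rho}{2}\|\psi\|_{\cH}^{2}\Big)\,\E_{X}\Big[\exp\big(\langle\Phi^{\e,\de}_{t,x},\,\varphi+(1-\rho)\psi\rangle_{\cH}\big)\Big]
\]
(for $p=1$, omit the first term in the exponent). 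Given the target $g\in\cS_H(\R^{d+1})$, let $h_g\in\cS(\R^{d+1})$ be its inverse Fourier transform and choose $\varphi=\tfrac{p-1}{p-\rho}h_g$, $\psi=\tfrac{1}{p-\rho}h_g$ — the optimal split, legitimate since $p-\rho>0$ (the single excluded case being $p=\rho=1$). Then $\varphi+(1-\rho)\psi=h_g$ while $\tfrac{1}{2(p-1)}\|\varphi\|_{\cH}^{2}+\tfrac{1-\rho}{2}\|\psi\|_{\cH}^{2}=\tfrac{1}{2(p-\rho)}\|h_g\|_{\cH}^{2}$, so
\[
\|u^{\rho}_{\e,\de}(t,x)\|_{p}\ \ge\ \exp\Big(-\tfrac{1}{2(p-\rho)}\int_{\R^{d+1}}|g(\tau,\xi)|^{2}\mu_{0}(d\tau)\mu(d\xi)\Big)\,\E_{X}\Big[\exp\big(\langle\Phi^{\e,\de}_{t,x},h_g\rangle_{\cH}\big)\Big].
\]
Letting $\e,\de\to0$, the left side tends to $\|u^{\rho}(t,x)\|_{p}$, and, the exponents being uniformly bounded along the path, dominated convergence sends the right side to $\exp\big(-\tfrac1{2(p-\rho)}\int_{\R^{d+1}}|g(\tau,\xi)|^{2}\mu_{0}(d\tau)\mu(d\xi)\big)\,\E_{X}\big[\exp\big(\int_{0}^{t}(\widetilde{\cF}g)(t-r,X^{x}_{r})\,dr\big)\big]$. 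Taking the supremum over $g\in\cS_{H}(\R^{d+1})$, and noting that $g\mapsto \tilde g(\tau,\xi):=e^{2\pi i\tau t}g(-\tau,\xi)$ is a bijection of $\cS_{H}(\R^{d+1})$ preserving $\int_{\R^{d+1}}|g(\tau,\xi)|^{2}\mu_{0}(d\tau)\mu(d\xi)$ and satisfying $(\widetilde{\cF}\tilde g)(r,\cdot)=(\widetilde{\cF}g)(t-r,\cdot)$, we recover the stated inequality.

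The crux is the second step above: the direct Gaussian/Hölder computation yields only the weaker exponent $\tfrac1{2(p-1)}$ together with an unwanted nonnegative quadratic term, and the key realization is that this term must be \emph{reabsorbed} — by a Cameron--Martin-type shift $\psi$, tuned jointly with the rescaling of $\varphi$ — so that the two penalties combine to exactly $\tfrac1{2(p-\rho)}$; the appearance of $p-\rho$ (rather than $p-1$ or $p$) is precisely what lets the Stratonovich ($\rho=0$) and Skorohod ($\rho=1$) cases fall out of a single estimate. The remaining ingredients — that $h_g\in\cH$, the convergence $\langle\Phi^{\e,\de}_{t,x},h_g\rangle_{\cH}\to\int_0^t(\widetilde{\cF}g)(t-r,X^x_r)\,dr$ with a uniform bound, the applicability of Fubini, and the $L^{p}$-convergence of the approximations — are routine from the constructions recalled in the excerpt and the results of \cite{Song}.
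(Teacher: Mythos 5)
Your proposal is correct and follows essentially the same route as the paper: Hölder's inequality against the $L^{q}$-normalized Gaussian weight $e^{W(\varphi)}$ applied to the approximations $u^{\rho}_{\e,\de}$, reabsorption of the residual $\tfrac{1-\rho}{2}\|\Phi^{\e,\de}_{t,x}\|_{\cH}^{2}$ term by completing the square (your split $\varphi=\tfrac{p-1}{p-\rho}h_g$, $\psi=\tfrac{1}{p-\rho}h_g$ is algebraically identical to the paper's scalar inequality with the optimal $c_0=1+(1-\rho)(q-1)$, yielding the same $\tfrac{1}{2(p-\rho)}$ penalty), and then the limit $\e,\de\to0$ with dominated convergence, with the same separate treatment of $p=1$, $\rho<1$. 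The only presentational difference is that you spell out the time-reversal/modulation bijection of $\cS_H(\R^{d+1})$ converting $(\widetilde\cF g)(t-r,\cdot)$ into $(\widetilde\cF g)(r,\cdot)$, a point the paper leaves implicit.
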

\begin{proof}
First, we consider the case $p>1$ and $\rho\in[0,1]$. Let $q:=p(p-1)^{-1}$ be the conjugate of $p.$
Let $\varphi(t,x)\in \cS(\R^{d+1})$ be a real function, and denote
$$X_{\varphi}=\exp\left(\int_\R \int_{\R^d} \varphi(s,y)W(ds,dy)-\frac q{2}\int_{\R^{d+1}}|\widehat \varphi (\tau, \xi)|^2 \mu_0(d\tau)\mu(d\xi)\right).$$
Note that $X_\varphi\in L^q(\Omega)$ and $\|X_\varphi\|_q=1.$
Hence, by   H\"older's inequality, we see 
\begin{align*}
&\|u^\rho_{\e,\delta}(t,x)\|_p\ge \E\left[  u^\rho_{\e,\delta}(t,x) X_\varphi \right]\\
&=\E_W\E_X\Bigg[\exp\bigg(\int_\R\int_{\R^d} \left[\Phi_{t,x}^{\e,\delta}(s,y)+\varphi(s,y)\right]W(ds,dy)\\
&~~~~-\frac\rho2 \int_{\R^{d+1}} |\mathcal F\Phi_{t,x}^{\varepsilon,\delta}(\tau,\xi)|^2 \mu_0(d\tau)\mu(d\xi) - \frac q2 \int_{\R^{d+1}} | \widehat\varphi(\tau,\xi)|^2 \mu_0(d\tau)\mu(d\xi) \bigg) \Bigg]\\
&=\E_X \Bigg[\exp\bigg( \frac{1-\rho}2 \int_{\R^{d+1}} |\mathcal F\Phi_{t,x}^{\varepsilon,\delta}(\tau,\xi)|^2 \mu_0(d\tau)\mu(d\xi) \\
&  ~~~~+\int_{\R^{d+1}}\overline{ \mathcal F\Phi_{t,x}^{\varepsilon,\delta}(\tau,\xi)} \widehat \varphi(\tau,\xi)\mu_0(d\tau)\mu(d\xi)  - \frac{q-1}{2}\int_{\R^{d+1}} | \widehat\varphi(\tau,\xi)|^2 \mu_0(d\tau)\mu(d\xi) \bigg) \Bigg].
\end{align*}
Note that for any $x\ge1$, 
\begin{eqnarray*}
&&(1-\rho)a^2+2ab-(q-1)b^2
=(1-\rho)a^2+2(1-x)ab+2xab-(q-1)b^2\\
&&\qquad \ge  -\frac{(x-1)^2}{1-\rho} b^2+2xab-(q-1)b^2 
=2xab-\left((q-1)+\frac{(x-1)^2}{1-\rho}\right)b^2\,.
\end{eqnarray*}
 If we choose the optimal value $c_0=1+(1-\rho)(q-1)$ for $x$, Then, we have 
 \[
 (1-\rho)a^2+2ab-(q-1)b^2\ge 2a(c_0b)- \frac{1}{p-\rho}(c_0b)^2\,.
 \]
 This argument also works with the product $ab$ replaced by  inner products $\langle \cdot, \cdot \rangle_\mathcal H$, noting that $\int_{\R^{d+1}}\overline{ \mathcal F\Phi_{t,x}^{\varepsilon,\delta}(\tau,\xi)} \widehat \varphi(\tau,\xi)\mu_0(d\tau)\mu(d\xi)$ is a real (random) number. Therefore,
\begin{align}
\|u_{\e,\delta}^\rho(t,0)\|_p
\ge&\E_X \Bigg[\exp\bigg( \int_{\R^{d+1}} \overline{\mathcal F\Phi_{t,x}^{\varepsilon,\delta}(\tau,\xi) } \Big( c_0\widehat \varphi(\tau,\xi)\Big)\mu_0(d\tau)\mu(d\xi) \notag\\
& - \frac12 \frac1{p-\rho}\int_{\R^{d+1}} | c_0\widehat \varphi(\tau,\xi)|^2 \mu_0(d\tau)\mu(d\xi) \bigg) \Bigg]. \label{e3.1}
\end{align}
Note that 
\begin{align*}
\mathcal F\Phi_{t,x}^{\varepsilon,\delta}(\tau,\xi) =\int_0^t \exp(-2\pi i(\tau(t-s)+\xi\cdot X_s)) \cF \left(\frac{1}{\delta}I_{[0, (t-s)\wedge \delta]}(\cdot)\right)(\tau) \widehat p_\e(\xi) ds
\end{align*}
which converges to $\int_0^t \exp(-2\pi i(\tau(t-s)+\xi\cdot X_s)) ds$ as $ \e$ and $\delta $ go  to $0.$
Letting $ \e$ and $\delta $ go to $0$ in \eqref{e3.1} yields 
\begin{align*}
\|u^\rho(t,0)\|_p
\ge&\E_X \Bigg[\exp\bigg( \int_0^t \int_{\R^{d+1}} \exp(-2\pi i(\tau(t-s)+\xi\cdot X_s)) \Big( c_0\widehat \varphi(\tau,\xi)\Big)\mu_0(d\tau)\mu(d\xi) ds\notag\\
& - \frac12 \frac1{p-\rho}\int_{\R^{d+1}} | c_0\widehat \varphi(\tau,\xi)|^2 \mu_0(d\tau)\mu(d\xi) \bigg) \Bigg]. 
\end{align*}
The proof is concluded for the case $p>1$, noting that $\cF(\cS(\R^{d+1}))= \cS(\R^{d+1})$, and $\widehat \varphi(-\tau,-\xi)=\overline{\widehat\varphi(\tau,\xi)}$ since $\varphi$ is a real function.

When $p=1$ and $\rho\in[0,1)$,   we have 
\begin{align*}
\E[u_{\e,\de}^\rho(t,x)]=&\E_X \Bigg[\exp\bigg( \frac{1-\rho}2 \int_{\R^{d+1}} |\mathcal F\Phi_{t,x}^{\varepsilon,\delta}(\tau,\xi)|^2 \mu_0(d\tau)\mu(d\xi) \bigg)\Bigg]\\
\ge&\E_X \Bigg[\exp\bigg( \int_{\R^{d+1}} \overline{\mathcal F\Phi_{t,x}^{\varepsilon,\delta}(\tau,\xi) } \Big( c_0\widehat \varphi(\tau,\xi)\Big)\mu_0(d\tau)\mu(d\xi) \notag\\
& - \frac12 \frac1{1-\rho}\int_{\R^{d+1}} | c_0\widehat \varphi(\tau,\xi)|^2 \mu_0(d\tau)\mu(d\xi) \bigg) \Bigg]. 
\end{align*}
where the last step follows from  $(1-\rho)a^2\ge 2ab-\frac1{1-\rho}b^2$.
The result can be deduced  in a similar way.  
\end{proof}

\begin{remark}\rm{The result still holds if the $\alpha$-stable process $X$ in  $u^\rho(t,x)$ is replaced by a general symmetric L\'evy process with characteristic function $\E[e^{i\xi\cdot X_t}]=e^{-t\Psi(\xi)}$. In this case, the conditions \eqref{Dalang'} and \eqref{Dalang} are $\int_{\R^d} \frac1{1+[\Psi(\xi)]^{1-\beta_0}}\mu(d\xi)<\infty$ and $\int_{\R^d} \frac1{1+\Psi(\xi)}\mu(d\xi)<\infty$, respectively.}
\end{remark}

\section{On the lower bound}

 In this section, we establish the lower bound in Theorem \ref{pa} for all $p\ge1.$ 

Note that  $\mu_0(d(c\tau))=c^{\beta_0}\mu_0(d\tau)$ and $\mu(d(c\xi))=c^\beta\mu(d\xi)$ for any $c>0$, by \eqref{ftmu0} and \eqref{ftmu}. Consequently, for  $h\in \cS_H(\R^{d+1})$,   where $\cS_H(\R^{d+1})$ is given in \eqref{schwartz'},  we have 
\begin{equation}\label{e4.1}
(\widetilde{\cF} h(a\cdot, b*))(s,x)=a^{-\beta_0}b^{-\beta}(\widetilde{\cF} h(\cdot, *))(a^{-1}s, b^{-1}x), \, a>0, b>0,
\end{equation}
where $\widetilde{\cF} g$ is defined by \eqref{fourier'}.

Now let  
\begin{equation}\label{tp}
t_p=t^{\chi}(p-\rho)^{\frac{\alpha}{\alpha-\beta}}  
\text{ for } p\ge 1, \text{ with } \chi=\frac{2\alpha-\beta-\alpha\beta_0}{\alpha-\beta},
\end{equation}
and for any $h\in\cS_H(\R^{d+1})$ denote 
$$h_t(\tau,\xi)=t (p-\rho) h\Big(t\tau, (p-\rho)^{-\frac{1}{\alpha-\beta}} t^{-\frac{\chi-1}{\alpha}}\xi\Big).$$
Then, by \eqref{e4.1}, change of variables and the self-similarity of the $\alpha$-stable process, we have
$$\int_0^{t_p} (\widetilde \cF h)(\frac{s}{t_p}, X_s)ds \overset{d}{=}\int_0^t (\widetilde \cF h_t)(s,X_s)ds,$$
and $$ \int_{\R^{d+1}} |h_t(\tau,\xi)|^2 \mu_0(d\tau) \mu(d\xi)=(p-\rho) t_p \int_{\R^{d+1}} |h(\tau,\xi)|^2 \mu_0(d\tau) \mu(d\xi).$$
Clearly,  $h_t\in \cS_H(\R^{d+1})$.   Proposition \ref{prop3.1} and the above two identities imply 
\begin{align*}
\|u^{\rho}(t,x)\|_p&\ge  \E_X \left[\exp\left(\int_0^t (\widetilde{\cF} h_t)(s,X_s)ds -\frac{1}{2(p-\rho)}\int_{\R^{d+1}} |h_t(\tau,\xi)|^2 \mu_0(d\tau) \mu(d\xi)\right)\right]\\
&=  \E_X \left[\exp\left(\int_0^{t_p} (\widetilde{\cF} h)(\frac{s}{t_p},X_s)ds -\frac{t_p}{2}\int_{\R^{d+1}} |h(\tau,\xi)|^2 \mu_0(d\tau) \mu(d\xi)\right)\right].
\end{align*}
By Proposition \ref{fklb}, 
\begin{align*}
&\liminf_{t\to\infty}\frac1{t_p} \log \E_X \left[\exp\left(\int_0^{t_p} (\widetilde{\cF} h)(\frac{s}{t_p},X_s)ds \right)\right] \ge \int_0^1 \lambda((\widetilde{\cF} h)(s,\cdot))ds\\
&=\int_0^1\sup_{g\in {\cal F}_\alpha}\bigg\{
\int_{\R^d}(\widetilde{\cF} h)(s,x)g^2(x)dx-
\int_{\R^d} |\xi|^\alpha |\widehat g(\xi)|^2d\xi \bigg\}~ds\\
&=\sup_{g\in {\cal A}_{\alpha,d}}\bigg\{\int_0^1\!\!\int_{\R^d}(\widetilde{\cF} h)(s,x)g^2(s, x)dxds-
\int_0^1\!\!\int_{\R^d} |\xi|^\alpha |\widehat g(s,\xi)|^2d\xi ds\bigg\}\,,
\end{align*}
where ${\cal A}_{\alpha,d}$ is given by \eqref{Ad}. 
Therefore, 
\begin{align}
&\liminf_{t\to\infty} t^{-\chi} \log\|u^{\rho}(t,x)\|_p\notag\\
&\ge (p-\rho)^{\frac{\alpha}{\alpha-\beta}} \sup_{g\in {\cal A}_{\alpha,d}}\bigg\{\Gamma(h,g)-
\int_0^1\!\!\int_{\R^d} |\xi|^\alpha |\widehat g(s,\xi)|^2d\xi ds\bigg\}\notag\\
&\ge (p-\rho)^{\frac{\alpha}{\alpha-\beta}} \sup_{g\in {\cal A}_{\alpha,d}}\bigg\{\sup_{h\in\cS_H(\R^{d+1})}\Gamma(h,g)-
\int_0^1\!\!\int_{\R^d} |\xi|^\alpha |\widehat g(s,\xi)|^2d\xi ds\bigg\}, \label{e4.2}
\end{align}
where 
\begin{align*}
\Gamma(h,g)&=\int_0^1\!\!\int_{\R^d}(\widetilde{\cF} h)(s,x)g^2(s, x)dxds-\frac{1}{2}\int_{\R^{d+1}} |h(\tau,\xi)|^2 \mu_0(d\tau) \mu(d\xi)\\
&=\int_{\R^{d+1}}h(\tau,\xi)(\cF g^2)(\tau, \xi)\mu_0(d\tau)\mu( d\xi)-\frac{1}{2}\int_{\R^{d+1}} |h(\tau,\xi)|^2 \mu_0(d\tau) \mu(d\xi).
\end{align*}
Since $\cS_H(\R^{d+1})$ is dense in $L^2(\R^{d+1}, \mu_0\otimes\mu)$ (see, e.g., \cite{Jolis10}), and $\Gamma(\cdot, g)$ is continuous with respect to the $L^2(\R^{d+1}, \mu_0\otimes \mu)$-norm,  we have
\begin{align*}
&\sup_{h\in\cS_H(\R^{d+1})}\Gamma(h,g)\ge \Gamma\Big(\cF(g^2)(-\tau, -\xi), g\Big)=\frac12\int_{\R^{d+1}}|(\cF g^2)(\tau, \xi)|^2\mu_0(d\tau) \mu(d\xi)\\
& =\frac12\int_0^1\int_0^1 \int_{\R^{2d}} \frac{\gamma(x-y)}{|s-r|^{\beta_0}} g^2(s,x)g^2(r,y) dxdydrds.
\end{align*}
Summarizing the computations starting from \eqref{e4.2}, we have
\begin{align*}
&\liminf_{t\to\infty} t^{-\chi} \log\|u^{\rho}(t,x)\|_p\\
&\ge (p-\rho)^{\frac{\alpha}{\alpha-\beta}} \sup_{g\in {\cal A}_{\alpha,d}}\bigg\{\frac12\int_0^1\int_0^1 \int_{\R^{2d}} \frac{\gamma(x-y)}{|s-r|^{\beta_0}} g^2(s,x)g^2(r,y) dxdydrds\\
&\qquad -
\int_0^1\!\!\int_{\R^d} |\xi|^\alpha |\widehat g(s,\xi)|^2d\xi ds\bigg\}\\
&= (p-\rho)^{\frac{\alpha}{\alpha-\beta}} \mathbf M(\alpha, \beta_0, d,\gamma),
\end{align*}
and the lower bound is established.

\section{On the upper bound}
In this section, we provide a proof for  the upper bound  in Theorem \ref{pa}. In Subsections \ref{section6.1} and \ref{section6.2}, we shall obtain the upper bound for any positive integer $n\ge 1$, i.e.,
\begin{align}
&\limsup_{t\to\infty}t^{-\frac{2\alpha-\beta-\alpha\beta_0}{\alpha-\beta}}\log\E\exp\Bigg(\frac12\sum_{j,k=1}^n \int_0^t\int_0^t |r-s|^{-\beta_0}\gamma(X_r^j-X_s^k)drds\notag\\
& \qquad\qquad\qquad\qquad\qquad-\frac\rho2\sum_{j=1}^n\int_0^t\int_0^t |r-s|^{-\beta_0}\gamma(X_r^j-X_s^j)drds\Bigg)\notag\\
&\qquad \qquad \le n(n-\rho)^{\frac{\alpha}{\alpha-\beta}}\mathbf M(\alpha, \beta_0, d,\gamma). \label{eq6.1'}
\end{align}
The proof  for real number $p\ge 2$ is inspired    by the idea in \cite{Le}.  We shall
compare $\|u^\rho(t,x)\|_p$ with $\|u^\rho(t,x)\|_2$ by  using the Mehler's formula and hypercontractivity of the Ornstein-Uhlenbeck semigroup operators.     First,  we address the case when $\rho\in[0,1]$, under the condition \eqref{Dalang'}.

Let $W'=\{W'(h), h\in \cH\}$ be an independent copy of $W=\{W(h), h\in\cH\}$, and let $W: \Omega\to \R^\cH$ and $ W':\Omega\to \R^\cH$ be the canonical mappings associated with $W$ and $W'$, respectively. For any  $F\in L^2(\Omega)$,  there is a     measurable mapping 
$\psi_F$  from $\R^\cH$ to $\R$ such that    $F=\psi_F\circ W$. Denote by $\{T_\tau,\tau\ge 0\}$ the Ornstein-Uhlenbeck semigroup associated with $W$.  By Mehler's formula (see, e.g., \cite{Nualart06}), 
$$T_\tau(F)=\E'\left[\psi_F(e^{-\tau} W+\sqrt{1-e^{-2\tau}}W')\right],$$
where $\E'$ denotes the expectation with respect to $W'$. For  $p\in (1, \infty)$ and $\tau\ge 0$, define $q=1+e^{2\tau}(p-1),$  Then, the Ornstein-Uhlenbeck semigroup operators possess the following hypercontractivity property (see, e.g., \cite{Nualart06}),
\begin{equation}\label{hyper}
\|T_\tau F\|_q\le \|F\|_p.
\end{equation}
Now fix $q\ge2$. Let $e^{2\tau}=q-1$, Then, $\|T_\tau F\|_q\le \|F\|_2$. Let $\tilde \rho=\frac{\rho+q-2}{q-1}\in[0,1).$ By \eqref{urho} and Mehler's formula, 
\begin{align*}
&T_\tau u^{\tilde \rho}(t,x)= \E'\E_X\Bigg[ \exp \bigg(
e^{-\tau}\int_{0}^{t}\int_{\mathbb{R}^d} \delta_0
(X_{t-r}^{x}-y)W(dr,dy)\notag\\
&~~~+\sqrt{1-e^{-2\tau}}\int_{0}^{t}\int_{\mathbb{R}^d} \delta_0
(X_{t-r}^{x}-y)W'(dr,dy) -\frac{\tilde \rho}2\int_0^t\int_0^t |r-s|^{-\beta_0}\gamma(X_r-X_s)drds\bigg)\Bigg]\\
&=\E_X\Bigg[ \exp \bigg(
e^{-\tau}\int_{0}^{t}\int_{\mathbb{R}^d} \delta_0
(X_{t-r}^{x}-y)W(dr,dy)\notag\\
&~~~\qquad\qquad +\frac12(1-\tilde \rho-e^{-2\tau})\int_0^t\int_0^t |r-s|^{-\beta_0}\gamma(X_r-X_s)drds\bigg)\Bigg]\\
&=\E_X\Bigg[ \exp \bigg(
\int_{0}^{t}\int_{\mathbb{R}^d} \delta_0
(X_{t-r}^{x}-y)W_\tau(dr,dy)\notag\\
&~~~\qquad\qquad -\frac\rho2\int_0^t\int_0^t |r-s|^{-\beta_0}\gamma_\tau(X_r-X_s)drds\bigg)\Bigg],
\end{align*}
where in the last step $W_\tau=e^{-\tau}W$ and $\gamma_\tau(x)=e^{-2\tau}\gamma(x)$.
By \eqref{hyper} with $p=2$, \eqref{eq6.1'} with $n=2$, and the scaling property for $\mathbf M(\alpha, \beta_0, d,\gamma)$ defined by  \eqref{mscaling}, we have
\begin{align*}
&\|T_\tau u^{\tilde \rho}(t,x)\|_q\le (2-\tilde\rho)^{\frac{\alpha}{\alpha-\beta}}\mathbf M(\alpha, \beta_0, d, \gamma)\\
&= (2-\tilde\rho)^{\frac{\alpha}{\alpha-\beta}}e^{\frac{2\tau\alpha}{\alpha-\beta}}\mathbf M(\alpha, \beta_0, d, \gamma_\tau) =(q-\rho)^{\frac{\alpha}{\alpha-\beta}}\mathbf M(\alpha, \beta_0, d, \gamma_\tau).
\end{align*}
Observing that $$T_\tau u^{\tilde \rho}(t,x)=\E_X\Bigg[ \exp \bigg(
\int_{0}^{t}\int_{\mathbb{R}^d} \delta_0
(X_{t-r}^{x}-y)W_\tau(dr,dy)-\frac\rho2\int_0^t\int_0^t |r-s|^{-\beta_0}\gamma_\tau(X_r-X_s)drds\bigg)\Bigg],$$
the upper bound in Theorem \ref{pa}  for any real number $q\ge2$ follows from the scaling property \eqref{mscaling}.

 Finally, for the case $\rho=1$ under the condition \eqref{Dalang}, in which $u^\rho(t,x)$ is the Skorohod solution to \eqref{pam},  we can apply the approach in \cite{Le} and obtain the upper bound for all real numbers $p\ge2$.

\subsection{Upper bound under the condition \eqref{Dalang'}.} \label{section6.1}

In this subsection, we deal with the case $\rho\in [0,1]$ under the condition \eqref{Dalang'}. The proof will be split into four steps. 

\noindent{\bf Step 1.} In this step, we will reduce the study 
of $n$-th moment to  the study of  first moment.
Recall that \eqref{timekernel} and \eqref{spacekernel} imply
\begin{align}\label{transform-hamiltonian}
&\int_0^t\int_0^t |r-s|^{-\beta_0}\gamma(X_r^j-X_s^k)drds\notag\\
&=C_0C(\gamma)\int_{\R^{d+1}} \left(\int_0^t |s-u|^{-\frac{\beta_0+1}{2}}K(x-X_s^k) ds\int_0^t |r-u|^{-\frac{\beta_0+1}{2}}K(x-X_r^j) dr\right) dudx\,. 
\end{align}
Therefore, by the inequality  $(\sum_{j=1}^n a_j)^2\le n\sum_{j=1}^n a_j^2$, we have
\begin{align*}
&\sum_{j,k=1}^n \int_0^t\int_0^t |r-s|^{-\beta_0}\gamma(X_r^j-X_s^k)drds-\rho\sum_{j=1}^n\int_0^t\int_0^t |r-s|^{-\beta_0}\gamma(X_r^j-X_s^j)drds\\
&\le (n-\rho)\sum_{j=1}^n \int_0^t\int_0^t |r-s|^{-\beta_0}\gamma(X_r^j-X_s^j)drds.
\end{align*}
Consequently, to obtain the upper bound in Theorem \ref{pa}, it suffices to show
\begin{align}
&\limsup_{t\to\infty}t^{-\frac{2\alpha-\beta-\alpha\beta_0}{\alpha-\beta}}\log\E\left[\exp\left(\frac{n-\rho}2\sum_{j=1}^n \int_0^t\int_0^t |r-s|^{-\beta_0}\gamma(X_r^j-X_s^j)drds\right)\right]\notag\\
&\le n (n-\rho)^{\frac{\alpha}{\alpha-\beta}}{\bf M}(\alpha,\beta_0, d,\gamma). \label{eq5.5}
\end{align}
By the scaling property \eqref{scaling}, we see 
$$\int_0^t\int_0^t |r-s|^{-\beta_0}\gamma(X_r^j-X_s^j)drds\overset{d}{=}\frac1{t_n}\frac{1}{n-\rho}\int_0^{t_n}\int_0^{t_n} \frac{\gamma(X_r^j-X_s^j)}{|t_n^{-1}(r-s)|^{\beta_0}}drds,$$
where $t_n=t^{\frac{2\alpha-\beta-\alpha\beta_0}{\alpha-\beta}}(n-\rho)^{\frac{\alpha}{\alpha-\beta}}$ is given in \eqref{tp}. Therefore, \eqref{eq5.5}  is equivalent to 
\begin{equation}\label{upperbound}
\limsup_{t\to\infty} \frac1t \log \E\left[\exp\left(\frac1{2t} \int_0^{t}\int_0^{t} \frac{\gamma(X_r-X_s)}{|t^{-1}(r-s)|^{\beta_0}}drds\right)\right]\le {\bf M}(\alpha, \beta_0, d,\gamma).
\end{equation}
Now, to obtain the upper bound, it suffices to prove \eqref{upperbound}. To this goal,  we shall
use the representations \eqref{timekernel}  and \eqref{spacekernel}  for the covariance functions.
But in these two representations, the integrals are over  infinite domains. We shall approximate 
them  by bounded, continuous,  and locally supported functions, and this will enable us to apply Hahn-Banach theorem in Step 4.   


{\bf Step 2.} In this step, we will replace the temporal covariance function by a smooth function with compact support.    Let the function $\varrho:\R^+\to[0,1]$ be a smooth function such that 
$\varrho(u)=1, u\in [0,1]$, $\varrho(u)=0$ for $u\ge2$, and $-1\le \varrho'(u)\le 0.$ Define the following truncated functions 
 \begin{equation}\label{kaa}
 k_A(u)=|u|^{-\frac{1+\beta_0}{2}}\varrho(A^{-1}|u|),\, k_{A,a}(u)= |u|^{-\frac{1+\beta_0}{2}}\varrho(A^{-1}|u|)(1-\varrho(a^{-1}|u|)),
 \end{equation}
with $A>0$ being a large number and $a>0$ being a number close to zero.

Then, by H\"older's inequality, we have for any $\e>0$
\begin{align}
&\E\left[\exp\left(\frac1{2t} \int_0^{t}\int_0^{t} \frac{\gamma(X_r-X_s)}{|t^{-1}(r-s)|^{\beta_0}}drds\right)\right]\notag\\
=&\E\left[\exp\left(C_0C(\gamma)\frac1{2t}\int_{\R^{d+1}} \left(\int_0^t |t^{-1}(s-u)|^{-\frac{\beta_0+1}{2}}K(x-X_s) ds\right)^2 dudx\right)\right]\notag\\
\le& \left(\E \left[\exp\left((1+\e)C_0C(\gamma)\frac p{2t}\int_{\R^{d+1}} \left(\int_0^t k_{A,a}(t^{-1}(s-u))K(x-X_s) ds\right)^2 dudx\right)\right]\right)^{1/p}\notag\\
&\times \left(\E \left[\exp\left((1+\frac1\e)C_0C(\gamma)\frac q{2t}\int_{\R^{d+1}} \left(\int_0^t \tilde k_{A,a}(t^{-1}(s-u))K(x-X_s) ds\right)^2 dudx\right)\right]\right)^{1/q},\label{eq6.7}
\end{align}
where $$\tilde k_{A,a}(u)=|u|^{-\frac{1+\beta_0}{2}}-k_{A,a}(u).$$
Note that 
\begin{align}
&\tilde k_{A,a}(u)=(|u|^{-\frac{1+\beta_0}{2}}-k_A(u))+(k_A(u)-k_{A,a}(u))\notag\\
&\le |u|^{-\frac{1+\beta_0}{2}} I_{[|u|\ge A]}+|u|^{-\frac{1+\beta_0}{2}}I_{[|u|\le 2a]}\notag\\
&\le A^{-\frac{\beta_0-\beta_0'}{2}}|u|^{-\frac{\beta_0'+1}{2}}+(2a)^{\frac{\tilde \beta_0-\beta_0}{2}}|u|^{-\frac{\tilde\beta_0+1}{2}}, \label{eq6.8}
\end{align}
for $0<\beta_0'<\beta_0<\tilde \beta_0<1$. We may choose $\beta_0'$ and $\tilde \beta_0$ such that  $(\alpha, \beta_0', \beta)$ and $(\alpha, \tilde \beta_0, \beta)$ satisfy the condition \eqref{Dalang'} if $\rho\in [0,1)$ or the  condition \eqref{Dalang} if $\rho=1$.

 Combining \eqref{timekernel} and \eqref{eq6.8}, for the second term in \eqref{eq6.7},  we have
\begin{align}
&\limsup_{t\to\infty}\frac1t\log\E \left[\exp\left((1+\frac1\e)C_0C(\gamma)\frac q{2t}\int_{\R^{d+1}} \left(\int_0^t \tilde k_{A,a}(t^{-1}(s-u))K(x-X_s) ds\right)^2 dudx\right)\right]\notag\\
&\le \limsup_{t\to\infty}\frac1t\log\E\left[\exp\left(C(\e, q) \left[A^{-(\beta_0-\beta_0')}\frac1{2t}\int_0^t\int_0^t |r-s|^{-\beta_0'}\gamma(X_r-X_s)drds\right.\right.\right.\notag\\
&\quad \quad \quad \quad \quad \quad \quad \quad \quad \left.\left.\left.+(2a)^{\tilde\beta_0-\beta_0}\frac1{2t}\int_0^t\int_0^t |r-s|^{-\tilde\beta_0}\gamma(X_r-X_s)drds\right]\right)\right]\notag\\
&\le C\Big(\alpha, \beta, \e,q, \gamma(\cdot)\Big)  \left(A^{-\frac{\alpha(\beta_0-\beta_0')}{\alpha-\beta}}+(2a)^{\frac{\alpha(\tilde\beta_0-\beta_0)}{\alpha-\beta}}\right)\label{eq6.9'}
\end{align}
where the last step follows from   H\"older's inequality and  \eqref{bound-3'}. Therefore, for fixed $(\e, q)$, this term can be as small as we  wish  if we choose $A$ sufficiently large and $a$ sufficiently small.
On the other hand, we can choose $\e$ arbitrarily close to 0 and $p$ arbitrarily close to 1. Consequently, to prove \eqref{upperbound}, it suffices to prove 
\begin{align}
&\limsup_{t\to \infty}\frac1t\E \left[\exp\left(C_0C(\gamma)\frac 1{2t}\int_{\R^{d+1}} \left(\int_0^t k_{A,a}(t^{-1}(s-u))K(x-X_s) ds\right)^2 dudx\right)\right]\notag\\
&\le \mathbf M(\alpha, \beta_0, d, \gamma). \label{eq6.9}
\end{align}

{\bf Step 3.} In this step, we will replace the spatial covariance function by a smooth function with compact support.
Similarly to the truncation for the temporal covariance function, for $0<b<B<\infty$, we let 
 $$K_{B,b}(x)= K(x)\varrho(B^{-1}|x|)(1-\varrho(b^{-1}|x|)),$$
 where $K(x)$ is given in \eqref{bound-9}. Then, $0\le K_{B,b}(x)\le K(x)$ and $K_{B,b}(x)\to K(x)$ when $B\to\infty$ and $b\to 0.$ Now the left-hand side of \eqref{eq6.9} can be estimated 
 in the  similar way  as in \eqref{eq6.7}, i.e., 
 \begin{align}
&\E \left[\exp\left(C_0C(\gamma)\frac 1{2t}\int_{\R^{d+1}} \left(\int_0^t k_{A,a}(t^{-1}(s-u))K(x-X_s) ds\right)^2 dudx\right)\right]\notag\\
\le& \left(\E \left[\exp\left((1+\e)C_0C(\gamma)\frac p{2t}\int_{\R^{d+1}} \left(\int_0^t k_{A,a}(t^{-1}(s-u))K_{B,b}(x-X_s) ds\right)^2 dudx\right)\right]\right)^{1/p}\notag\\
&\times \left(\E \left[\exp\left((1+\frac1\e)C_0C(\gamma)\frac q{2t}\int_{\R^{d+1}} \left(\int_0^t  k_{A,a}(t^{-1}(s-u))\tilde K_{B,b}(x-X_s) ds\right)^2 dudx\right)\right]\right)^{1/q},\notag
\end{align}
where $\tilde K_{B,b}(x)=K(x)-K_{B,b}(x)$. Noting that $ k_{A,a}(u)$ is supported on $[-2A, 2A]$ and  is uniformly bounded (say, by $L$), we have 
\begin{align*}
&\E \left[\exp\left((1+\frac1\e)C_0C(\gamma)\frac q{2t}\int_{\R^{d+1}} \left(\int_0^t k_{A,a}(t^{-1}(s-u))\tilde K_{B,b}(x-X_s) ds\right)^2 dudx\right)\right]\\
\le & \E \left[\exp\left((1+\frac1\e)C_0C(\gamma)L^2(4A+2)\frac q{2t}\int_{\R^{d}} \left(\int_0^t \tilde K_{B,b}(x-X_s) ds\right)^2 dx\right)\right].
\end{align*}
Using that $\frac{(a+b)^2}{t+s}\le \frac{a^2}{t}+\frac{b^2}{s}$, we have
\begin{align*}
&\frac{1}{t+s}\int_{\R^d}\left(\int_0^{t+s} \tilde K_{B,b}(x-X_s) ds\right)^2dx\\
\le& \frac{1}{t}\int_{\R^d}\left(\int_0^{t} \tilde K_{B,b}(x-X_s) ds\right)^2dx+\frac{1}{s}\int_{\R^d}\left(\int_t^{t+s} \tilde K_{B,b}(x-X_s) ds\right)^2dx\\
=& \frac{1}{t}\int_{\R^d}\left(\int_0^{t} \tilde K_{B,b}(x-X_s) ds\right)^2dx+\frac{1}{s}\int_{\R^d}\left(\int_0^{s} \tilde K_{B,b}(x-(X_{t+s}-X_t)) ds\right)^2dx,
\end{align*}
where the last equality follows from a change of variable for $s$ and the fact that the Lebesgue measure on $\R^d$ is invariant under the translation $x\to x+X_t$. 
Hence, by the independent and stationary properties of the increments of L\'evy processes, we have 
\begin{align*}
&\E \left[\exp\left(\frac C{t+s}\int_{\R^{d}} \left(\int_0^{t+s} \tilde K_{B,b}(x-X_s) ds\right)^2 dx\right)\right]\\
\le&\E \left[\exp\left(\frac Ct\int_{\R^{d}} \left(\int_0^t \tilde K_{B,b}(x-X_s) ds\right)^2 dx\right)\right] \E \left[\exp\left(\frac Cs\int_{\R^{d}} \left(\int_0^s\tilde K_{B,b}(x-X_s) ds\right)^2 dx\right)\right].
\end{align*}
Therefore, 
\begin{align}
&\limsup_{t\to\infty}\frac1t\log\E \left[\exp\left(\frac C{t+s}\int_{\R^{d}} \left(\int_0^{t+s} \tilde K_{B,b}(x-X_s) ds\right)^2 dx\right)\right]\nonumber\\
\le&\limsup_{t\to\infty}\frac1t\log\left(\E \left[\exp\left(C\int_{\R^{d}} \left(\int_0^1 \tilde K_{B,b}(x-X_s) ds\right)^2 dx\right)\right]\right)^t\nonumber\\
=&\log\E \left[\exp\left(C\int_{\R^{d}} \left(\int_0^1 \tilde K_{B,b}(x-X_s) ds\right)^2 dx\right)\right]\,. 
\label{e.6.10-add}
\end{align}
By Theorem \ref{thm-exponential} we have by  Dalang's condition \eqref{Dalang} 
\begin{align*}
&\E \left[\exp\left(\theta C(\gamma)\int_{\R^{d}} \left(\int_0^1 K(x-X_s) ds\right)^2 dx\right)\right]\\
& =\E \left[\exp\left(\theta \int_0^1\int_0^1\gamma(X_r-X_s) drds \right)\right]<\infty
\end{align*}
for any $\theta >0$.  
Now letting    $B\to\infty$ and $b\to0$, by  the dominated convergence theorem 
we see that  the term on the right-hand side of \eqref{e.6.10-add} goes to 
$0$.

Now combining all the inequalities after \eqref{eq6.9}, noting that we can choose $\e$ arbitrarily close to 0, and $p$ arbitrarily close to $1$, we have that \eqref{eq6.9} can be reduced to 
\begin{align*}
&\limsup_{t\to \infty}\frac1t\E \left[\exp\left(C_0C(\gamma)\frac 1{2t}\int_{\R^{d+1}} \left(\int_0^t k_{A,a}(t^{-1}(s-u))K_{B,b}(x-X_s) ds\right)^2 dudx\right)\right]\notag\\
&\le \mathbf M(\alpha, \beta_0, d, \gamma). 
\end{align*}
{\bf Step 4.}  Summarizing the arguments in Step 2 and Step 3, we see that
 to obtain the upper bound in Theorem \ref{pa}, it suffices to show 
\begin{align}
&\limsup_{t\to \infty}\frac1t\E \left[\exp\left(\frac \theta {2t}C_0C(\gamma)\int_{\R^{d+1}} \left[\int_0^t k_{A,a}(t^{-1}(s-u))K_{B,b}(x-X_s) ds\right]^2 dudx\right)\right]\notag\\
&\le\theta^{\frac{\alpha}{\alpha-\beta}}\mathbf  M(\alpha, \beta_0, d, \gamma). \label{eq6.10}
\end{align}
In this final step, we will  prove the above inequality.  Fix positive constants $A, a, B, b$ and choose arbitrarily $M>2\max\{A,B\}$.
\begin{align}
&\int_{\R^{d+1}}\bigg[\int_0^t k_{A,a}(u-t^{-1}s)K_{B,b}
(x-X_s)ds\bigg]^2dudx\notag\\
&=\sum_{k\in\Z}\sum_{z\in\Z^d}\int_{[0, M]^{d+1}}\bigg[\int_0^t
k_{A, a}(Mk+u-t^{-1}s)K_{B,b}(Mz+x-X_s)ds\bigg]^2dudx\cr
&\le\int_{[0, M]^{d+1}}\bigg[\sum_{j\in\Z}\sum_{z\in\Z^d}\int_0^t
k_{A, a}(Mj+u-t^{-1}s)K_{B,b}(Mz+x-X_s)ds\bigg]^2dudx\notag\\
&=\int_{[0,M]^{d+1}}\bigg[\int_0^t\widetilde{k}_M(u-t^{-1}s)
\widetilde{K}_M(x-X_s)ds\bigg]^2dudx\,,  \label{eq6.11}
\end{align} 
where
\begin{align}\label{u-14}
\widetilde{k}_M(u)=\sum_{j\in\Z}{
k_{A,a}}(Mj+u)\hskip.1in\hbox{and} \hskip.1in
\widetilde{K}_M(x)=\sum_{z\in\Z^d}{ K_{B,b}}(Mz+x)\,
\end{align}
are $M$-periodic functions.  Note that the summations in \eqref{u-14} are well-defined, since the supports of $k_{A,a}(\cdot)$ and $K_{B,b}(\cdot)$ are bounded domains. 
The process
\begin{equation}  \label{eq6.13}
\phi_t(u,x):=\frac1{t}\int_0^t\widetilde k_M(u-t^{-1}s)\widetilde{K}_M(x-X_s)ds\,,
\hskip.2in (u, x)\in [0, M]^{d+1},
\end{equation}
can be considered as a  process taking  values in the Hilbert space
${L}^2([0, M]^{d+1})$ with the norm denoted by $\|\cdot\|$.   Since $\widetilde{k}_M$ and $\widetilde{K}_M$ are
bounded, smooth functions with bounded derivatives, there is a constant $C>0$, such that
$$
\|\phi_t(\cdot, \cdot)\|\le C\hskip.1in \hbox{and}\hskip.1in
\|\phi_t(\cdot +u_1, \hskip.05in\cdot +x_1)
-\phi_t(\cdot +u_2, \hskip.05in \cdot+x_2)\|\le C\vert (u_1, x_1)-(u_2, x_2)\vert
$$
for all $t$ and $(u_1, x_1), (u_2, x_2)\in [0,M]^{d+1}$.
Let $\mathbb K$ be the closure of the following set in 
$L^2([0, M]^{d+1})$: 
$$
\begin{aligned}
\Big\{f\in {L}{ ^2}([0, M]^{d+1}): &\hskip.1in \|f\|\le C
\hskip.05in \hbox{and}\hskip.05in \|f(\cdot +u_1, \hskip.05in\cdot
+x_1)
-f(\cdot +u_2, \hskip.05in \cdot+x_2)\|\\
&\le C\vert (u_1, x_1)-(u_2, x_2)\vert \hskip.05in
\hbox{for}\hskip.05in (u_1, x_1), (u_2, x_2)\in [0,M]^{d+1}\Big\}.
\end{aligned}
$$
 Then, $\phi_t$ defined in \eqref{eq6.13} belongs to  $\mathbb K$, and it follows from \cite[Theorem IV8.21]{DS} that  $\mathbb K$ is compact in ${L}{ ^2}([0, M]^{d+1})$.

Let $\delta>0$ be fixed. For any $g\in \mathbb K$, noting that  the set of  bounded and continuous functions are dense in ${L}{ ^2}([0, M]^{d+1})$, 
the Hahn-Banach theorem (\cite{Yosida}) implies that there is a bounded and
continuous function $f\in {L}{ ^2}([0, M]^{d+1})$ such that $\|g\|^2<-\|f\|^2 +2\langle f, g\rangle+\delta $. By the finite cover theorem for compact sets, one can find finitely many bounded and continuous functions $f_1,\cdots,f_m$ such that $\|g\|^2<\delta +\max_{1\le i\le m}\{-\|f_j\|^2+2\langle f_i,
g\rangle\}$ for all $g\in \mathbb K$. In particular, we have, noting that $\phi_t\in \mathbb K$,
\begin{align*}
\E \left[e^{\frac12\theta t\|\phi_t\|^2} \right]\le e^{\frac12\delta\theta t}
\sum_{i=1}^me^{-\frac12\theta t\|f_i\|^2}\E\left[ e^{\theta t\langle f_i, \phi_t \rangle}\right]\,.
\end{align*}
Therefore,
\begin{align} \label{eq6.14}
\limsup_{t\to\infty}\frac1t\log \E\left[e^{\frac12\theta t\|\phi_t\|^2}  \right] \le \frac12\delta+\max_{1\le i\le m}\left\{-\frac12\theta \|f_i\|^2+\limsup_{t\to\infty}\frac1t \log\E\left[e^{\theta t \langle f_i, \phi_t\rangle}\right] \right\}.
\end{align}
Notice that, for $i=1, \dots, m,$ 
$$
t\langle f_i, \phi_t\rangle=\int_0^t\bigg[\int_{[0, M]^{d+1}}
f_i(u,x)\widetilde{k}_M(u-t^{-1}s)\widetilde{K}_M(x-X_s)dudx\bigg]ds
=\int_0^t\bar{f}_i\Big({s\over t}, X_s\Big)ds\,,
$$
where
$$
\bar{f}_i(s, x)=\int_{[0, M]^{d+1}}
f_i(u,y)\widetilde{k}_M(u-s)\widetilde{K}_M(y-x)dudy\hskip.2in (s, x)
\in [0,1]\times\R^d.
$$
Since $\widetilde K_M$ is a periodic function and $\widetilde K_M(x-X_s)=\widetilde K_M(x-X_s^M),$
we have that 
$$t\langle f_i, \phi_t\rangle
=\int_0^t\bar{f}_i\Big({s\over t}, X_s^M\Big)ds\,.
$$
It is easy to check that $\bar f_i$
satisfies the condition in Proposition \ref{prop-fk}. Hence, 
\begin{align*}\label{u-15'}
 \lim_{t\to\infty}{1\over t}\log\E\left[e^{\theta t \langle f_i,
\phi_t\rangle}\right]  &=\sup_{g\in {\cal
A}_{\alpha, d}^M}\bigg\{\theta \int_0^1\!\!\int_{\mathbb T_M^d}\bar{f}_i(s,x)g^2(s,x)dxds
-\int_0^1\!\!\mathcal E_{\alpha,M} (g(s,\cdot), g(s,\cdot))ds\bigg\},
\end{align*}
where
\begin{equation*} 
\mathcal A^M_{\alpha,d}=\left\{g(s,\cdot)\in L^2(\mathbb T_M^d): \|g(s,\cdot)\|_{\mathbb T_M^d}=1, \forall s\in[0,1] \text{ and } \int_0^1\mathcal E_{\alpha,M} (g(s,\cdot), g(s,\cdot))ds<\infty\right\}.  
\end{equation*}
Notice that 
\begin{align}
 &\int_0^1\!\!\int_{\R^d}\bar{f}_i(s,x)g^2(s,x)dxds\notag\\
=&\int_{[0, M]^{d+1}}f_i(u,y)\bigg[\int_0^1\!\!\int_{\mathbb T_M^d}
\widetilde{k}_M(u-s)\widetilde{K}_M(y-x)g^2(s, x)dxds\bigg]dudy\notag\\
\le & \frac12\|f_i\|^2+\frac12 \int_{[0,M]^d}\int_{\R}\left[\int_0^1\!\!
\int_{\mathbb T_M^d}|u-s\vert^{-\frac{1+\beta_0}2}\widetilde K_{M}(y-x)g^2(s, x)dxds\right]^2dudy. \label{eq6.15}
\end{align}
Since $\delta$ in \eqref{eq6.14} can be arbitrarily small and $M$ in \eqref{eq6.15} can be arbitrarily large,  the desired inequality \eqref{eq6.10} follows from inequalities \eqref{eq6.11} -- \eqref{eq6.15} and Lemma \ref{lemma7.3}. 
\subsection{When $\rho=1$ under the condition \eqref{Dalang}}\label{section6.2}
In this subsection, we consider the Skorohod case, i.e., $\rho=1$, under the condition \eqref{Dalang}, by applying the methodology used in Section \ref{section6.1}. However, under condition \eqref{Dalang}, there will be a technical issue in step 1, since the left-hand side of \eqref{upperbound} is infinity if condition \eqref{Dalang'} is violated. To deal with  this issue, we will first, do step 2 for  $n$-th moments which reduces $|s|^{-\beta_0}$ to a smooth function with compact support, and then, we do step 1 to reduce the $n$-th moment to first moment. 

More precisely, as in Step 1 in Section \ref{section6.1}, when $\rho=1$, \eqref{eq6.1'} is equivalent to 
\begin{align}
&\limsup_{t\to\infty}\frac1t\log\E\left[\exp\left(\frac1{(n-1)t}\sum_{1\le j<k\le n} \int_0^{t}\int_0^{t} \frac{\gamma(X_r^j-X_s^k)}{|t^{-1}(r-s)|^{\beta_0}}drds\right)\right]\notag\\
&\le \mathbf M(\alpha, \beta_0, d, \gamma) \label{upperbound'}
\end{align}
Recall that $k_{A,a}(u)$ is defined in \eqref{kaa}. Let 
\[
\psi_{A,a}(u)=C_0\int_{\R}k_{A,a}(u-v)k_{A,a}(v)dv
\]
 and 
 \[
 \tilde \psi_{A,a}(u)=|u|^{-\beta_0}-\psi_{A,a}(u)\,.
\]
Then, by H\"older's inequality, we have
\begin{align}
&\E\left[\exp\left(\frac1{(n-1)t} \sum_{1\le j<k\le n} \int_0^{t}\int_0^{t} \frac{\gamma(X_r^j-X_s^k)}{|t^{-1}(r-s)|^{\beta_0}}drds\right)\right]\notag\\
\le& \left(\E \left[\exp\left(p\frac{C_0C(\gamma)}{(n-1)t}\sum_{1\le j<k\le n}   \int_0^{t}\int_0^{t} \psi_{A,a}(t^{-1}(r-s))\gamma(X_r^j-X_s^k)drds\right)\right]\right)^{1/p}\notag\\
&\times \left(\E \left[\exp\left(q\frac{C_0C(\gamma)}{(n-1)t}\sum_{1\le j<k\le n}   \int_0^{t}\int_0^{t} \tilde\psi_{A,a}(t^{-1}(r-s))\gamma(X_r^j-X_s^k)drds\right)\right]\right)^{1/q}.\label{eq6.16}
\end{align}
 Therefore, using a similar argument which reduces \eqref{upperbound}  to \eqref{eq6.9}, one can show that to prove \eqref{upperbound'}, it is suffices to prove
\begin{align}\label{eq6.20}
&\limsup_{t\to\infty}\frac1t\log\E\left[\exp\left(\frac1{(n-1)t}\sum_{1\le j<k\le n} \int_0^{t}\int_0^{t} \psi_{A,a}(t^{-1}(r-s))\gamma(X_r^j-X_s^k)drds\right)\right]\notag\\
&\le \mathbf M(\alpha, \beta_0, d, \gamma) \,, 
\end{align}
provided that, for any $\lambda >0$
\begin{align}\label{eq6.20'}
\lim_{\substack{A\to\infty\\ a\to 0}}\limsup_{t\to\infty} \frac1t\log\E \left[\exp\left(\lambda  \int_0^{t}\int_0^{t} \tilde\psi_{A,a}(t^{-1}(r-s))\gamma(X_r^j-X_s^k)drds\right)\right]=0. 
\end{align}
Recalling that $\tilde k_{A,a}(u)=|u|^{-\frac{1+\beta_0}{2}}-k_{A,a}(u),$ 
\begin{align*}
&|u|^{-\beta_0}-\psi_{A,a}(u)=C_0\int_\R |u-v|^{-\frac{1+\beta_0}{2}}|v|^{-\frac{1+\beta_0}{2}}dv-C_0\int_\R k_{A,a}(u-v) k_{A,a}(v) dv\\
&\le C\left(\int_\R\tilde k_{A,a}(u-v)|v|^{-\frac{1+\beta_0}{2}}dv+\int_\R k_{A,a}(u-v) \tilde k_{A,a}(v)dv\right)\\
&\le 2C \int_\R \tilde k_{A,a}(u-v) |v|^{-\frac{1+\beta_0}{2}}dv \\
&\le 2C \left(A^{-\frac{\beta_0-\beta_0'}2}\int|u-v|^{-\frac{\beta_0'+1}{2}}|v|^{-\frac{1+\beta_0}{2}}dv+(2a)^{\frac{\tilde \beta_0-\beta_0}2}\int|u-v|^{-\frac{\tilde \beta_0+1}{2}}|v|^{-\frac{1+\beta_0}{2}}dv\right)
\end{align*}
where $0<\beta_0'<\beta_0<\tilde \beta_0<1$ and the last inequality follows from \eqref{eq6.8}. Hence we have 
\begin{equation}\label{eq6.21}
\tilde \psi_{A,a}(u)= |u|^{-\beta_0}-\psi_{A,a}(u)\le C(\beta_0, \beta', \tilde \beta)\left( A^{-\frac{\beta_0-\beta_0'}2} u^{\frac{\beta_0+\beta_0'}{2}}+(2a)^{\frac{\tilde \beta_0-\beta_0}2} u^{\frac{\beta_0+\tilde \beta_0}{2}}\right).
\end{equation}
Therefore, \eqref{eq6.20'} holds because of  \eqref{eq6.21} and the second half of Proposition \ref{bound-1}, and hence \eqref{upperbound'} now is reduced to \eqref{eq6.20}.  

By a similar argument used in Step 1, in order to show \eqref{upperbound'} that has been reduced to \eqref{eq6.20},  it suffices to prove 
\begin{align}
&\limsup_{t\to\infty}\frac1t\log\E\left[\exp\left(\frac1{2t}\int_0^{t}\int_0^{t} \psi_{A,a}(t^{-1}(r-s))\gamma(X_r-X_s)drds\right)\right]\notag\\
&\le \mathbf M(\alpha, \beta_0, d, \gamma).\label{eq6.17}
\end{align}
  The left-hand side now is finite under condition \eqref{Dalang} since $\psi_{A,a}$ is a bounded function. 
 Noting that \eqref{eq6.17} is identical to \eqref{eq6.9}, we may prove it  in the exact same 
 way as in Step 3 and Step 4 in Subsection \ref{section6.1}. 

\section{Appendix}
First, we will prove  the finiteness of $\mathbf M(\alpha, \beta_0, d, \gamma)$ defined in \eqref{M}. Consider a general non-negative definite (generalized) function $\gamma(x)\in \cS'(\R^d)$.  By the  Bochner-Schwartz Theorem, there exists a tempered measure $\mu$ on $\R^d$ such that $\gamma$ is the Fourier transform of $\mu$ in $\cS'(\bR^d)$, i.e.
$$\int_{\bR^d}\varphi(x)\gamma(x)dt=\int_{\bR^d}\cF \varphi(x)\mu(dx) \quad \mbox{for all} \quad \varphi \in \cS(\bR^d).$$
It follows that for $f,g\in \mathcal S(\R^d)$,
\begin{equation}\label{fourier-int}
\int_{\R^d}\int_{\R^d} \gamma(x-y) f(x) g(y)dxdy= \int_{\R^d} \widehat f(\xi) \overline{\widehat g(\xi)}\mu(d\xi).
\end{equation}


\begin{lemma}\label{lemma-1}
Under the Dalang's condition \eqref{Dalang-add},
$$\sup_{g\in \mathcal F_{\alpha, d}}\left\{\theta \int_{\R^d}\int_{\R^d}\gamma(x-y) g^2(x) g^2(y) dxdy- \int_{\R^d}|\xi|^\alpha|\widehat g(\xi)|^2 d\xi\right\}<\infty,$$
for any $\theta>0$, where $\cF_{\alpha,d}$ is given in \eqref{Falphad}
\end{lemma}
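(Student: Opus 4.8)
The plan is to pass to the Fourier side and reduce the claim to a smallness estimate for the quartic form against the Dirichlet form. By the Bochner--Schwartz representation of $\gamma$ and Parseval's identity \eqref{fourier-int} (extended from $\mathcal S(\R^d)$ to $g^2\in L^1(\R^d)$ by a routine approximation), for every $g\in\mathcal F_{\alpha,d}$ one has
\[
A(g):=\int_{\R^d}\!\!\int_{\R^d}\gamma(x-y)g^2(x)g^2(y)\,dxdy=\int_{\R^d}|\widehat{g^2}(\xi)|^2\,\mu(d\xi),
\]
where $\mu$ is the spectral measure of $\gamma$ furnished by the Bochner--Schwartz theorem. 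Writing $E(g):=\mathcal E_\alpha(g,g)\ge 0$, it suffices to prove that for every $\e>0$ there is a finite constant $C_\e$, depending only on $\e$, $\alpha$, $d$ and $\mu$, with $A(g)\le \e\,E(g)+C_\e$ for all $g\in\mathcal F_{\alpha,d}$: taking $\e=1/(2\theta)$ then gives $\theta A(g)-E(g)\le-\tfrac12 E(g)+\theta C_\e\le\theta C_\e$, so the supremum in the lemma is bounded by $\theta C_{1/(2\theta)}<\infty$.

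For the estimate $A(g)\le\e E(g)+C_\e$ I would use two pointwise bounds on $\widehat{g^2}=\widehat g\ast\widehat g$. Since $\|g\|_2=1$, trivially $\|\widehat{g^2}\|_\infty\le\|g^2\|_{L^1}=1$. For the high-frequency bound, note that in the convolution $\widehat{g^2}(\xi)=\int\widehat g(\eta)\widehat g(\xi-\eta)\,d\eta$ at least one of $|\eta|,|\xi-\eta|$ exceeds $|\xi|/2$ for every $\eta$ (else $|\xi|\le|\eta|+|\xi-\eta|<|\xi|$); hence, by the symmetry $\eta\leftrightarrow\xi-\eta$, Cauchy--Schwarz, and $\|\widehat g\|_2=1$,
\begin{align*}
|\widehat{g^2}(\xi)|&\le 2\Big(\int_{|\eta|\ge|\xi|/2}|\widehat g(\eta)|^2\,d\eta\Big)^{1/2}\|\widehat g\|_2\\
&\le 2\Big(\frac{2^\alpha}{|\xi|^\alpha}\int_{\R^d}|\eta|^\alpha|\widehat g(\eta)|^2\,d\eta\Big)^{1/2}=\frac{2^{1+\alpha/2}}{|\xi|^{\alpha/2}}\,E(g)^{1/2},
\end{align*}
so $|\widehat{g^2}(\xi)|^2\le 2^{2+\alpha}E(g)|\xi|^{-\alpha}$. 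Fix $\kappa\ge 1$ and split the $\mu$-integral for $A(g)$ over $\{|\xi|\le\kappa\}$ and $\{|\xi|>\kappa\}$, using the first bound on the former set and the second on the latter; together with $|\xi|^{-\alpha}\le 2(1+|\xi|^\alpha)^{-1}$ for $|\xi|\ge1$ this gives
\[
A(g)\le\mu\big(\{|\xi|\le\kappa\}\big)+2^{3+\alpha}\,E(g)\int_{|\xi|>\kappa}\frac{\mu(d\xi)}{1+|\xi|^\alpha}.
\]
By the Dalang condition \eqref{Dalang-add}, $\mu(\{|\xi|\le\kappa\})\le(1+\kappa^\alpha)\int_{\R^d}(1+|\xi|^\alpha)^{-1}\mu(d\xi)<\infty$ for each fixed $\kappa$, while $\int_{|\xi|>\kappa}(1+|\xi|^\alpha)^{-1}\mu(d\xi)\to0$ as $\kappa\to\infty$ by dominated convergence. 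So, given $\e>0$, choose $\kappa$ with $2^{3+\alpha}\int_{|\xi|>\kappa}(1+|\xi|^\alpha)^{-1}\mu(d\xi)<\e$ and set $C_\e:=\mu(\{|\xi|\le\kappa\})$; this yields $A(g)\le\e E(g)+C_\e$ and completes the proof.

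The only step that is more than bookkeeping is the high-frequency estimate $|\widehat{g^2}(\xi)|^2\le C_\alpha E(g)|\xi|^{-\alpha}$, and even that is a short convolution argument (the point being that at frequency $\xi$ at least one factor of $\widehat g\ast\widehat g$ carries frequency $\gtrsim|\xi|$, which is controlled by $\mathcal E_\alpha(g,g)$). The Dalang condition \eqref{Dalang-add} enters exactly twice: to make $\mu$ finite on balls and to force the tail $\int_{|\xi|>\kappa}(1+|\xi|^\alpha)^{-1}\mu(d\xi)$ to vanish. Since nothing used the particular form of $\gamma$, this covers both $\gamma(x)=|x|^{-\beta}$ and $\gamma(x)=\prod_{j=1}^d|x_j|^{-\beta_j}$ (for which \eqref{Dalang-add} is equivalent to $\beta<\alpha$), which is precisely the input needed to deduce the finiteness of $\mathbf M(\alpha,\beta_0,d,\gamma)$.
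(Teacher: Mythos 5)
Your proof is correct and takes essentially the same approach as the paper's: pass to the spectral side via \eqref{fourier-int}, use $|\widehat{g^2}|\le 1$ on a ball, prove the pointwise high-frequency bound $|\widehat{g^2}(\xi)|^2\le C\,\mathcal E_\alpha(g,g)\,|\xi|^{-\alpha}$, and invoke the Dalang condition \eqref{Dalang-add} so the tail factor can be made small enough to absorb into the Dirichlet form. The only cosmetic difference is how that high-frequency bound is obtained—you split the convolution $\widehat g*\widehat g$ according to which factor carries frequency at least $|\xi|/2$ and apply Cauchy--Schwarz with a Chebyshev-type tail estimate, while the paper uses the subadditivity of $|\xi|^{\alpha/2}$ together with Young's inequality—but both yield the same estimate and the same conclusion.
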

\begin{proof}  It suffices to  consider $g\in \cF_{\alpha,d}\cap \mathcal S(\R^d)$, since $\mathcal S(\R^d)$ is dense in $\F_{\alpha,d}$ endowed with the norm 
\[
\|g\|^2=\left(\int_{\R^d}\int_{\R^d} \gamma(x-y) g^2(x) g^2(y) dxdy\right)^{1/2}+  \int_{\R^d}|\xi|^\alpha|\widehat g(\xi)|^2d\xi\,.
\]
By \eqref{fourier-int} and noting that $\|\cF(g^2)(\cdot)\|_{\infty}\le 1,$  we have 
\begin{align*}
&\int_{\R^d}\int_{\R^d}\gamma(x-y) g^2(x)g^2(y) dxdy=\int_{\R^d} |\cF(g^2)(\xi)|^2 \mu(d\xi)\\
&\qquad \le \mu([|\xi|\le N])+\int_{[|\xi|> N]} |(\widehat g*\widehat g)(\xi)|^2 |\xi|^\alpha \frac{\mu(d\xi)}{|\xi|^\alpha}\\
&\qquad \le \mu([|\xi|\le N])+\left\|(\widehat g*\widehat g)(\cdot)|^2 |\cdot|^\alpha\right\|_{\infty}\int_{[|\xi|> N]}  \frac{\mu(d\xi)}{|\xi|^\alpha}.
\end{align*}
Since  $\alpha\in(0,2]$ we see  $|\xi|^{\alpha/2}\le |\xi-\eta|^{\alpha/2}+|\eta|^{\alpha/2}$ for all $\eta\in \R^d$.  Thus, we have  
\begin{align*}
\bigg|(\widehat g*\widehat g)(\xi)|\xi|^{\alpha/2}\bigg|&\le\int_{\R^d}|\widehat g|(\xi-\eta)|\widehat g|(\eta) \left(|\eta|^{\alpha/2}+|\xi-\eta|^{\alpha/2}\right)d\eta \\
&\le 2\left(|\widehat g|(\cdot) * \left(|\widehat g|(\cdot)|\cdot|^{\alpha/2}\right)\right)(\xi).
\end{align*}
By Young's inequality and Parseval's identity,
\begin{align*}
\Big\||\widehat g|(\cdot) * \left(|\widehat g|(\cdot)|\cdot|^{\alpha/2}\right)\Big\|^2_{\infty}\le \|\widehat g\|_{2}^2\int_{\R^d} |\xi|^{\alpha}|\widehat g(\xi)|^2d\xi= \int_{\R^d} |\xi|^{\alpha}|\widehat g(\xi)|^2d\xi.
\end{align*}
Therefore, for any $\theta>0$,
\begin{align*}
&\theta \int_{\R^d}\int_{\R^d}\gamma(x-y) g^2(x)g^2(y)dxdy -\int_{\R^d} |\xi|^{\alpha}|\widehat g(\xi)|^2d\xi \\
&\le \theta \mu([|\xi|\le N])+ \left(\theta \int_{[|\xi|>N]}\frac{\mu(d\xi)}{|\xi|^\alpha}-1\right) \int_{\R^d} |\xi|^{\alpha}|\widehat g(\xi)|^2d\xi.
\end{align*}
Since $\mu(d\xi)$ is tempered and hence locally integrable, $\mu([|\xi|\le N])$ is finite for any $0<N<\infty$. On the other hand, the Dalang's condition \eqref{Dalang-add} implies that $\lim_{N\to\infty}\int_{[|\xi|>N]}\frac{\mu(d\xi)}{|\xi|^\alpha}=0.$ Therefore, for any $\theta>0,$ one can always find $N$ sufficiently large such that 
$$\theta \int_{\R^d}\int_{\R^d}\gamma(x-y) g^2(x)g^2(y) -\int_{\R^d} |\xi|^{\alpha}|\widehat g(\xi)|^2d\xi\le \theta \mu([|\xi|\le  N])<\infty.$$
This concludes the proof. \hfill 
\end{proof}

\begin{lemma}\label{a-1}
Let $\gamma_0(u),u\in \R$ be a locally integrable function. Then, under the Dalang's condition \eqref{Dalang-add},
\begin{align*}
\sup_{g\in \mathcal A_{\alpha,d}}\Bigg\{\theta \int_0^1\int_0^1 \int_{\R^{2d}}\gamma_0(r-s)\gamma(x-y)g^2(s,x)g^2(r,y)dxdydrds\notag\\
-\int_0^1\int_{\R^d}|\xi|^\alpha|\widehat g(s,\xi)|^2d\xi ds\Bigg\}<\infty,
\end{align*}
for any $\theta>0.$
\end{lemma}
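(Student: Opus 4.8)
The plan is to decouple the two time variables $g(s,\cdot)$ and $g(r,\cdot)$ by the Cauchy--Schwarz inequality, and then reduce the assertion to the time-free estimate already proved in Lemma~\ref{lemma-1}, using crucially that $[0,1]$ is a bounded interval and $\gamma_0$ is locally integrable. First I would assume without loss of generality that $\gamma_0\ge 0$ (in all the applications $\gamma_0(u)=|u|^{-\beta_0}$ or one of its truncations) and set $C_0:=\int_{-1}^{1}\gamma_0(u)\,du<\infty$; note that both $\int_0^1\gamma_0(r-s)\,dr\le C_0$ and $\int_0^1\gamma_0(r-s)\,ds\le C_0$ for every $s,r\in[0,1]$, since the argument stays in $[-1,1]$.

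Fix $g\in\mathcal A_{\alpha,d}$ and write $g_s:=g(s,\cdot)$. By definition $\|g_s\|_2=1$ for all $s$, and since $\int_0^1\mathcal E_\alpha(g_s,g_s)\,ds<\infty$ we have $g_s\in\mathcal F_{\alpha,d}$ for a.e.\ $s$. Set $\Phi(s):=\int_{\R^{2d}}\gamma(x-y)g_s^2(x)g_s^2(y)\,dxdy$. As in the proof of Lemma~\ref{lemma-1} (where $\mathcal S(\R^d)$ is dense in the relevant norm), \eqref{fourier-int} yields $\Phi(s)=\|\widehat{g_s^2}\|^2_{L^2(\mu)}$, which is finite for $g_s\in\mathcal F_{\alpha,d}$ by the estimate in that proof; moreover, by Cauchy--Schwarz in $L^2(\mu)$ and the arithmetic--geometric mean inequality,
\[
\int_{\R^{2d}}\gamma(x-y)g_s^2(x)g_r^2(y)\,dxdy=\int_{\R^d}\widehat{g_s^2}(\xi)\,\overline{\widehat{g_r^2}(\xi)}\,\mu(d\xi)\le \tfrac12\big(\Phi(s)+\Phi(r)\big).
\]
Multiplying by $\gamma_0(r-s)\ge 0$, integrating over $[0,1]^2$, and applying Fubini with the bounds on $\int_0^1\gamma_0(r-s)\,dr$ and $\int_0^1\gamma_0(r-s)\,ds$ noted above, I get
\[
\int_0^1\!\!\int_0^1\!\!\int_{\R^{2d}}\gamma_0(r-s)\gamma(x-y)g_s^2(x)g_r^2(y)\,dxdy\,drds\le C_0\int_0^1\Phi(s)\,ds.
\]

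Consequently, for any $\theta>0$,
\[
\theta\!\int_0^1\!\!\int_0^1\!\!\int_{\R^{2d}}\!\gamma_0(r-s)\gamma(x-y)g_s^2(x)g_r^2(y)\,dxdydrds-\int_0^1\mathcal E_\alpha(g_s,g_s)\,ds\le \int_0^1\big(\theta C_0\,\Phi(s)-\mathcal E_\alpha(g_s,g_s)\big)\,ds.
\]
For a.e.\ $s$ the slice $g_s$ lies in $\mathcal F_{\alpha,d}$, so the integrand is bounded above by $\sup_{h\in\mathcal F_{\alpha,d}}\big\{\theta C_0\int_{\R^{2d}}\gamma(x-y)h^2(x)h^2(y)\,dxdy-\int_{\R^d}|\xi|^\alpha|\widehat h(\xi)|^2\,d\xi\big\}$, which is finite by Lemma~\ref{lemma-1} (with $\theta C_0$ in place of $\theta$). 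This bound is uniform in $g\in\mathcal A_{\alpha,d}$, so taking the supremum over $g$ gives the claim. The only delicate point is the validity of the spectral representation of $\Phi(s)$ and of the cross term when $g_s$ is merely in $\mathcal F_{\alpha,d}$ rather than Schwartz; this is handled exactly as in Lemma~\ref{lemma-1} by density, and I do not expect it to cause real trouble.
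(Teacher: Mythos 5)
Your proposal is correct and takes essentially the same route as the paper: represent the spatial cross term via \eqref{fourier-int} and Cauchy--Schwarz in $L^2(\mu)$, use the local integrability of $\gamma_0$ on $[-1,1]$ to control the time convolution, and reduce everything to the time-independent variational bound of Lemma \ref{lemma-1}. The only (harmless) variation is that you bound the cross term by the symmetric quantity $\tfrac12\big(\Phi(s)+\Phi(r)\big)$ and apply Fubini, whereas the paper extends $g$ periodically in time and uses H\"older in the time variable to reach the same diagonal quantity $\int_0^1\Phi(s)\,ds$ up to a constant.
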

\begin{proof}
The result will  be proven by using a similar argument as that 
 in the proof \cite[Lemma 5.2]{Chen'}. Similar as in Lemma \ref{lemma-1}.  Consider $g\in \cA_{\alpha,d}\cap \mathcal S(\R^{d+1})$, and extend $g(s,x)$   periodically in $s$ from $[0,1]\times \R^d$ to $[0,\infty)\times \R^d$,
still denoted by the same notation $g(s,x)$. Then,   we have 
\begin{align*}
&\int_0^1\int_0^1 \int_{\R^{2d}} \gamma_0(r-s)\gamma_0(x-y) g^2(r,x)g^2(s,y)dxdydrds\\
&=2\int_0^1\int_0^r \int_{\R^{2d}} \gamma_0(r-s)\gamma(x-y) g^2(r,x)g^2(s,y)dxdydrds\\
&=2\int_0^1\gamma_0(r)\int_0^{1-r} \int_{\R^{2d}} \gamma(x-y) g^2(r+s,x)g^2(s,y)dxdyds dr\\
&\le 2 \int_0^1|\gamma_0(r)|\int_0^{1} \int_{\R^{2d}} \gamma(x-y) g^2(r+s,x)g^2(s,y)dxdyds dr.
\end{align*}
By \eqref{fourier-int},  we can write 
\begin{align*}
&\int_{\R^{2d}} \gamma(x-y) g^2(r+s,x)g^2(s,y)dxdy=\int_{\R^d}\left(\cF g^2(r+s,\cdot)\right)(\xi)\overline{\left(\cF g^2(s,\cdot)\right)(\xi)}\mu(d\xi)\\
&\le \left(\int_{\R^d}\left|\left(\cF g^2(r+s,\cdot)\right)(\xi)\right|^2\mu(d\xi)\right)^{1/2}\left(\int_{\R^d}\left|\left(\cF g^2(s,\cdot)\right)(\xi)\right|^2\mu(d\xi)\right)^{1/2}\\
&=\left(\int_{\R^{2d}} \gamma(x-y) g^2(r+s,x)g^2(r+s,y)dxdy\right)^{1/2}\left( \int_{\R^{2d}} \gamma(x-y) g^2(s,x)g^2(s,y)dxdy\right)^{1/2}.
\end{align*}
Noting that $g$ is periodic in time, we see  by H\"older inequality, 
\begin{align*}
\int_0^{1} \int_{\R^{2d}} \gamma(x-y) g^2(r+s,x)g^2(s,y)dxdyds\le \int_0^1 \int_{\R^{2d}}\gamma(x-y) g^2(s,x)g^2(s,y)dxdyds.
\end{align*}
Summarizing   the above computations, we obtain 
\begin{align*}
&\int_0^1\int_0^1 \int_{\R^{2d}} \gamma_0(r-s)\gamma(x-y) g^2(r,x)g^2(s,y)dxdydrds\\
&\le 2\int_0^1|\gamma_0(u)|du \int_0^1 \int_{\R^{2d}}\gamma(x-y) g^2(s,x)g^2(s,y)dxdyds.
\end{align*}
Hence,
\begin{align*}
&\sup_{g\in \mathcal A_{\alpha,d}}\Bigg\{\theta \int_0^1\int_0^1 \int_{\R^{2d}}\gamma_0(r-s)\gamma(x-y)g^2(s,x)g^2(r,y)dxdydrds\notag\\
&~~~~~~~~~~~~-\int_0^1\int_{\R^d}|\xi|^\alpha|\widehat g(s,\xi)|^2d\xi ds\Bigg\}\\
&\le \sup_{g\in \mathcal A_{\alpha,d}}\Bigg\{2\theta \int_0^1|\gamma_0(u)|du \int_0^1 \int_{\R^{2d}}\gamma(x-y) g^2(s,x)g^2(s,y)dxdyds\notag\\
&~~~~~~~~~~~~~~~-\int_0^1\int_{\R^d}|\xi|^\alpha|\widehat g(s,\xi)|^2d\xi ds\Bigg\}\\
&=\int_0^1 \sup_{g\in \mathcal A_{\alpha,d}}\Bigg\{2\theta \int_0^1|\gamma_0(u)|du \int_0^1 \int_{\R^{2d}}\gamma(x-y) g^2(s,x)g^2(s,y)dxdy\notag\\
&~~~~~~~~~~~~~~~-\int_0^1\int_{\R^d}|\xi|^\alpha|\widehat g(s,\xi)|^2d\xi \Bigg\}ds\\
&= \sup_{g\in \mathcal F_{\alpha,d}}\Bigg\{2\theta \int_0^1|\gamma_0(u)|du \int_0^1 \int_{\R^{2d}}\gamma(x-y) g^2(x)g^2(y)dxdy\notag\\
&~~~~~~~~~~~~~~~-\int_0^1\int_{\R^d}|\xi|^\alpha|\widehat g(\xi)|^2d\xi \Bigg\}ds,
\end{align*}
where the variation on the right-hand side is finite by Lemma \ref{lemma-1}. \hfill
\end{proof}

The following lemma was used in the proof of upper bound.
\begin{lemma} \label{lemma7.3}
Let $\widetilde K_M$ be defined by \eqref{u-14}.  Then  
\begin{eqnarray}
&&\limsup_{M\to\infty}\sup_{g\in {\cal A}_{\alpha, d}^M}
\bigg\{\frac12 C_0C(\gamma)\int_{[0,M]^d}\int_{\R}\left[\int_0^1\!\!
\int_{\mathbb T_M^d}|u-s\vert^{-\frac{1+\beta_0}2}\widetilde K_{M}(y-x)g^2(s, x)dxds
\right]^2dudy\nonumber\\
&&\qquad \qquad  -\int_0^1 \mathcal E_{\alpha, M}(g(s,\cdot),g(s,\cdot))ds\bigg\}  \le  \mathbf M(\alpha,\beta_0,d,\gamma)\,. \label{e.7.2-add} 
\end{eqnarray} 
\end{lemma}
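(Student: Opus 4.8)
The plan is to recognise the left–hand side of \eqref{e.7.2-add} as (the supremum defining) a torus variational problem, and to pass from it to the whole–space functional $\mathbf M$ by a cut–off argument.

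\textbf{Step 1 (rewriting the interaction).} Expanding the square inside \eqref{e.7.2-add}, performing the $u$–integral by \eqref{timekernel} and the $y$–integral using the $M$–periodicity of $\widetilde K_M$ together with \eqref{spacekernel}, one finds that the first term in the braces equals
\[
\frac12\int_0^1\!\!\int_0^1\frac{1}{|r-s|^{\beta_0}}\int_{\mathbb T_M^d}\!\!\int_{\mathbb T_M^d}\gamma^\sharp_M(x-y)\,g^2(s,x)g^2(r,y)\,dxdy\,drds ,
\]
where $\gamma^\sharp(z):=C(\gamma)\int_{\R^d}K_{B,b}(w)K_{B,b}(w-z)\,dw$ and $\gamma^\sharp_M(z)=\sum_{k\in\Z^d}\gamma^\sharp(z+Mk)$. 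The function $\gamma^\sharp$ is bounded, continuous, nonnegative–definite, supported in a fixed ball $B_{R_0}$, and $0\le\gamma^\sharp\le\gamma$; since $\mathbf M(\alpha,\beta_0,d,\cdot)$ is nondecreasing in the covariance (the interaction integrand is nonnegative), it suffices to prove \eqref{e.7.2-add} with $\gamma$ replaced by $\gamma^\sharp$. Because $\|\gamma^\sharp\|_\infty<\infty$ and $\int_0^1\int_0^1|r-s|^{-\beta_0}drds<\infty$, this interaction term is bounded by a constant $C_1$ independent of $g$ and $M$; as $\mathbf M(\alpha,\beta_0,d,\gamma^\sharp)\ge 0$, we may and do restrict the supremum to $g$ with $\int_0^1\mathcal E_{\alpha,M}(g(s,\cdot),g(s,\cdot))\,ds\le C_1$. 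I also record the Gagliardo forms, which carry the \emph{same} constant $\kappa=c_{d,\alpha}^{-1}$ (this is exactly the compatibility of the normalisations in \eqref{dirichlet} and \eqref{Falphad}):
\[
\mathcal E_\alpha(f,f)=\kappa\!\int_{\R^{2d}}\!\frac{(f(x)-f(y))^2}{|x-y|^{d+\alpha}}dxdy,\qquad
\mathcal E_{\alpha,M}(f,f)=\kappa\!\int_{[0,M]^d}\!\!\int_{\R^d}\frac{(f(x)-f(y))^2}{|x-y|^{d+\alpha}}dydx ,
\]
the first for $f\in L^2(\R^d)$, the second for $M$–periodic $f$.

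\textbf{Step 2 (cut–off and translation).} Fix a small $\eta>0$, a cut–off $\chi\in C_c^\infty([0,1]^d;[0,1])$ with $\chi\equiv1$ on $[\eta,1-\eta]^d$ and $\|\nabla\chi\|_\infty\le c/\eta$, and set $\chi_M(x):=\chi(x/M)$. For an admissible $g$ and $v\in[0,M]^d$ put $g_v(s,\cdot):=\chi_M\,g(s,\cdot+v)$ (with $g$ taken $M$–periodic), supported in $[0,M]^d$. From $(\chi_M(x)g(x)-\chi_M(y)g(y))^2\le(1+\eta)\chi_M(x)^2(g(x)-g(y))^2+(1+\eta^{-1})g(y)^2(\chi_M(x)-\chi_M(y))^2$, translation–invariance of $\mathcal E_{\alpha,M}$, the estimate $\int_{\R^d}(\chi_M(x)-\chi_M(y))^2|x-y|^{-d-\alpha}dx\le C(\eta M)^{-\alpha}$ (splitting at $|x-y|\sim\eta M$ and using $\alpha<2$), and the fact that $\mathrm{supp}\,\chi_M$ stays at distance $\ge\eta M$ from $\partial[0,M]^d$ (which makes the wrap–around contribution of the non-local kernel $O((\eta M)^{-\alpha})$ as well), one obtains
\[
\mathcal E_\alpha(g_v(s,\cdot),g_v(s,\cdot))\le(1+\eta)\,\mathcal E_{\alpha,M}(g(s,\cdot),g(s,\cdot))+C_\eta M^{-\alpha}.
\]
For the interaction, since $\gamma^\sharp_M\ge\gamma^\sharp$, $\chi_M^2\le1$ and $\gamma^\sharp$ is supported in $B_{R_0}$, the substitution $x\mapsto x-v,\ y\mapsto y-v$ (the difference $x-y$ being unchanged) gives, for all $s,r$,
\[
\int_{\R^{2d}}\gamma^\sharp(x-y)\,g_v^2(s,x)g_v^2(r,y)\,dxdy\ \ge\ \int_{\mathbb T_M^{2d}}\gamma^\sharp_M(x-y)\,g^2(s,x)g^2(r,y)\,dxdy-R_M(s,r,v),
\]
where $R_M$ collects the wrap–around part $\sum_{k\neq0}\gamma^\sharp(\cdot+Mk)$ and the ``frame'' part $\gamma^\sharp(1-\chi_M^2(x)\chi_M^2(y))$. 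Averaging over $v\in[0,M]^d$ one finds $M^{-d}\int_{[0,M]^d}\int_0^1\int_0^1|r-s|^{-\beta_0}R_M(s,r,v)\,drdsdv\le C(R_0/M+c(\eta))$ and $M^{-d}\int_{[0,M]^d}\int_0^1(1-a_s(v)^2)\,dsdv=c(\eta)$, where $a_s(v)^2:=\|g_v(s,\cdot)\|_2^2=1-\int(1-\chi_M^2(x))g^2(s,x+v)dx$ and $c(\eta):=\int_{[0,1]^d}(1-\chi^2)\to0$. Fix $v$ with both averaged quantities at most twice their means.

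\textbf{Step 3 (renormalisation and conclusion).} Set $\mathcal G:=\{s\in[0,1]:a_s^2\ge1-\sqrt{c(\eta)}\}$, so $|[0,1]\setminus\mathcal G|\le 2\sqrt{c(\eta)}$. Define $\widetilde g\in\mathcal A_{\alpha,d}$ by $\widetilde g(s,\cdot):=a_s^{-1}g_v(s,\cdot)$ for $s\in\mathcal G$, and for $s\notin\mathcal G$ by a unit–$L^2$ bump spread out so much that $\mathcal E_\alpha(\widetilde g(s,\cdot),\widetilde g(s,\cdot))\le\eta$, placed far from everything (at mutually $>R_0$–separated points outside the $R_0$–neighbourhood of $[0,M]^d$) so that it does not interact with the rest. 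On $\mathcal G$, $a_s^{-2}\le 1+O(\sqrt{c(\eta)})$, so $\int_0^1\mathcal E_\alpha(\widetilde g(s,\cdot),\widetilde g(s,\cdot))ds\le(1+\eta)(1+O(\sqrt{c(\eta)}))\int_0^1\mathcal E_{\alpha,M}(g(s,\cdot),g(s,\cdot))ds+C_\eta M^{-\alpha}+\eta$; and since $a_s^{-1}a_r^{-1}\ge1$, the double interaction of $\widetilde g$ is at least the torus interaction of $g$ minus $O(\sqrt{c(\eta)}+R_0/M)$, the contributions of times in $[0,1]\setminus\mathcal G$ being $O(\sqrt{c(\eta)})$ because $\beta_0<1$ and $|[0,1]\setminus\mathcal G|$ is small. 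Feeding these into $\mathbf M(\alpha,\beta_0,d,\gamma^\sharp)\ge\frac12(\text{interaction of }\widetilde g)-\int_0^1\mathcal E_\alpha(\widetilde g(s,\cdot),\widetilde g(s,\cdot))ds$ and using $\int_0^1\mathcal E_{\alpha,M}(g(s,\cdot),g(s,\cdot))ds\le C_1$, one gets
\[
\mathbf M(\alpha,\beta_0,d,\gamma^\sharp)\ \ge\ \Big[\tfrac12(\text{torus interaction of }g)-\int_0^1\mathcal E_{\alpha,M}(g(s,\cdot),g(s,\cdot))ds\Big]-C\big(\eta+\sqrt{c(\eta)}+R_0/M+C_\eta M^{-\alpha}\big).
\]
Taking the supremum over $g$, then letting $M\to\infty$, then $\eta\to0$, yields \eqref{e.7.2-add}.

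The genuine obstacle is Steps 2--3: unlike in the Brownian case treated in \cite{CHSX}, the non-locality of $(-\Delta)^{\alpha/2}$ forces the cut–off $\chi_M$ to interact with $g$ at every scale, so one must keep the cut–off width $\eta M\to\infty$ and control the non-local tail of $\mathcal E_\alpha(\chi_M g,\chi_M g)$ through the Gagliardo seminorm; moreover the $L^2$–renormalisation needed to re-enter $\mathcal A_{\alpha,d}$ would blow up the Dirichlet form on the small set of times where the cut–off removes appreciable mass, which is precisely why one simultaneously invokes the a priori bound $\int_0^1\mathcal E_{\alpha,M}(g(s,\cdot),g(s,\cdot))ds\le C_1$ and substitutes a harmless spread-out bump on that bad set of times.
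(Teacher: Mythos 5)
Your argument is correct and proves \eqref{e.7.2-add}, but it implements the cut-off strategy rather differently from the paper. You first integrate out $u$ and $y$ exactly, via \eqref{timekernel} and the periodicity of $\widetilde K_M$, so that the first term in the braces becomes a torus interaction with the periodized autocorrelation kernel $\gamma^\sharp_M$ of $K_{B,b}$, and then you use monotonicity ($\gamma^\sharp\le\gamma$ by \eqref{spacekernel} and $K_{B,b}\le K$) to aim at $\mathbf M(\alpha,\beta_0,d,\gamma^\sharp)\le\mathbf M(\alpha,\beta_0,d,\gamma)$; the paper instead keeps the squared form and compares it directly with the whole-space functional of the truncated competitor, exploiting only the compact support and boundedness of $K_{B,b}$. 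For the competitor itself, the paper multiplies $g$ by a cut-off with ramp width $M^{1/2}$, invokes Lemma 3.4 of \cite{DV1} for each time slice so that the frame mass is at most $2dM^{-1/2}$ uniformly in $s$ (whence the normalization $G(s)$ is uniformly close to $1$, cf.\ \eqref{C-0'}), and absorbs the resulting multiplicative $(1+\varepsilon)^2/b_M$ factor through the scaling property \eqref{mscaling}; you instead average over a single time-independent translation $v$, pick a good $v$ by Markov's inequality, accept a small exceptional set of times where mass is lost, repair it with far-away spread-out bumps of negligible energy, and control the renormalization factor with the a priori bound $\int_0^1\mathcal E_{\alpha,M}(g(s,\cdot),g(s,\cdot))ds\le C_1$ obtained from the boundedness of $\gamma^\sharp$. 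What your route buys is that the translation is time-independent (so it manifestly leaves both terms in the braces unchanged, whereas the paper's per-slice shift $a(s)$ requires more care for the interaction term) and that no scaling identity for $\mathbf M$ is needed; what the paper's route buys is that it avoids the bad-time/bump construction and the a priori energy bound altogether, since the Donsker--Varadhan shift makes the mass loss small for every $s$ at once. Both proofs rest on the same Gagliardo-form compatibility of \eqref{dirichlet} with \eqref{Falphad} (the paper's \eqref{e5.7}--\eqref{e5.8}, Lemma \ref{lemma4}), which you assert rather than prove.

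Two small caveats. First, your tail estimate $C_\eta M^{-\alpha}$ for $\mathcal E_\alpha(\chi_M g_v)$ uses $\alpha<2$, so as written your proof covers $\alpha\in(0,2)$ only; for $\alpha=2$ the Gagliardo representation fails and one argues with $|\nabla(\chi_M g)|^2\le(1+\eta)\chi_M^2|\nabla g|^2+(1+\eta^{-1})g^2|\nabla\chi_M|^2$ instead (the paper likewise defers $\alpha=2$ to \cite{CHSX}). Second, in the energy cross-term the $y$-variable runs over all of $\R^d$ while $g^2(\cdot+v)$ is $M$-periodic with infinite total mass, so the uniform bound $\sup_y\int(\chi_M(x)-\chi_M(y))^2|x-y|^{-d-\alpha}dx\le C_\eta M^{-\alpha}$ alone does not suffice; one must also use the decay $\int\chi_M^2(x)|x-y|^{-d-\alpha}dx\lesssim M^d\,\mathrm{dist}(y,[0,M]^d)^{-d-\alpha}$ for far periodic cells and sum over cells, which still yields $C_\eta M^{-\alpha}$. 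Your parenthetical remark gestures at this, and the claimed bound is correct, but this summation over periodic copies should be made explicit in a full write-up.
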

\begin{proof} By \cite[Lemma A.1]{hnx}, there exists a positive constant $C_{\alpha, d}$,  depending on $(\alpha, d)$ only, such that 
$$|\xi|^{\alpha}=C_{\alpha, d}\int_{\R^d} \frac{1-\cos(2\pi\xi\cdot y)}{|y|^{d+\alpha}}dy,$$
where $C_{\alpha, d}=\int_{\R^d} \frac{1-\cos(\eta\cdot y)}{|y|^{d+\alpha}}dy$ for any $\eta \in \R^d$ with $|\eta|=2\pi.$ By Lemma \ref{lemma4}, we have 
\begin{equation}\label{e5.7}
\mathcal E_{\alpha} (f,f) =  \frac{C_{\alpha, d}}2\int_{\R^d}\int_{\R^d}  \frac{|f(y)-f(x)|^2}{|y-x|^{d+\alpha}}dydx,
\end{equation}
and for any $M$-periodic function $h$, 
\begin{equation}\label{e5.8}
\mathcal E_{\alpha, M} (h,h) =\frac{C_{\alpha, d}}2\int_{[0, M]^d}\int_{\R^d}  \frac{|h(y)-h(x)|^2}{|y-x|^{d+\alpha}}dydx.
\end{equation}

To prove \eqref{e.7.2-add}, for any fixed $M$-periodic (in space) function $g\in \mathcal A_{\alpha, d}^M$, we shall construct a function $f\in \mathcal A_{\alpha,d}$ such that $f\equiv g$ on $[0,1]\times [M^{1/2}, M-M^{1/2}]$ and the difference between $g$
and $f$ on $[0,1]\times (\R^d \setminus[M^{1/2}, M-M^{1/2}])$ is negligible in some suitable sense as $M$ goes to infinity. 

Denote 
\begin{equation}
E_M:=[0,M]^d\setminus[M^{1/2}, M-M^{1/2}]\,. \label{e.7.4-add} 
\end{equation}
 By Lemma 3.4 in \cite{DV1}, for fixed $s\in[0,1]$, there is an $a(s)\in \R^d$ such that 
$$\int_{E_M} g^2(s,x+a(s)) dx\le  2d M^{-1/2}.$$
 We  assume $a\equiv0$, for otherwise we may replace $g(s,\cdot)$ with $g(s,a(s)+\cdot)$ without changing the value inside $\{\}$ in \eqref{e.7.2-add}. Therefore, without loss of generality, we assume for all $s\in[0,1]$,
\begin{equation}\label{C-0'}
\int_{E_M}  g^2(s,x) dx\le  2d M^{-1/2}.
\end{equation}
Define $\varphi(x)=\phi(x_1)\cdots \phi(x_d),\quad x=(x_1,\cdots, x_d)\in \R^d $, where 
\begin{equation*}
\phi(x)=\begin{cases}
x M^{-1/2}, & 0\le x\le M^{1/2},\\
1, & M^{1/2}\le x\le M- M^{1/2},\\
M^{1/2}-x M^{-\frac12}, & M- M^{1/2}\le x \le M,\\
0, & \text{otherwise}, 
\end{cases}
\end{equation*}
 and let 
$$f(s,x)=  g(s, x)\varphi(x)/\sqrt {G(s)},$$
with  $$ G(s) :=\int_{\R^d} g^2(s,y)\varphi^2(y)dy. $$
 Then, \[
\hbox{$|\phi|\le 1, | \phi'|\le M^{-1/2}$ and hence $|\varphi|\le 1, |\nabla \varphi|\le d^{1/2}M^{-1/2}$.}
\]
  Noting that $$1\ge G(s)=\int_{[0,M]^d} g^2(s,y)\varphi^2(y)dy\ge 1-\int_{E_M} g^2(s,y)dy\ge 1- 2dM^{-1/2},$$ 
we have 
\begin{equation}\label{eq.bm}
0< 1- 2dM^{-1/2}\le b_M:=\inf_{s\in[0,1]} G(s) \le 1. 
\end{equation}

Firstly, we compare the second terms in the variations  on both sides of \eref{e.7.2-add}, i.e., compare $J_1:=\int_0^1 \mathcal E_{\alpha} (f(s,\cdot), f(s,\cdot))ds$ with $J:=\int_0^1 \mathcal E_{\alpha, M} (g(s,\cdot), g(s,\cdot))ds$.  Note that
\begin{align*}
& |g(s,y)\varphi(y)-g(s,x) \varphi(x)| =|(g(s,y)-g(s,x) ) \varphi(y) +g(s,x) (\varphi(y)-\varphi(x))|^2\\
&\le (1+\varepsilon)  |g(s,y)-g(s,x)|^2 \varphi^2(y)+ (1+1/\varepsilon ) g^2(s,x) |\varphi(y)-\varphi(x)|^2, 
\end{align*}
for any $\varepsilon>0.$
Therefore,
\begin{align} \label{e5.10}
&\int_{\R^d}\int_{\R^d} \frac{|g(s,y)\varphi(y)-g(s,x) \varphi(x)|^2}{|y-x|^{d+\alpha}}dydx\notag\\
&\le (1+\varepsilon) \int_{\R^d}\int_{\R^d} \frac{|g(s,y)-g(s,x)|^2 \varphi^2(y)}{|y-x|^{d+\alpha}}dydx\notag\\
& +  (1+1/\varepsilon) \int_{\R^d}\int_{\R^d} \frac{g^2(s,x)|\varphi(y)-\varphi(x)|^2}{|y-x|^{d+\alpha}}dydx. 
\end{align}
Now we bound the above two integrals separately.  
For the first integral, it is easy to verify   by \eqref{e5.7} that  
\begin{equation}\label{e5.11}
\frac{C_{\alpha, d}}{2} \int_{\R^d}\int_{\R^d} \frac{|g(s,y)-g(s,x)|^2 \varphi^2(y)}{|y-x|^{d+\alpha}}dydx\le \mathcal E_{\alpha, M}(g(s,\cdot), g(s,\cdot)).
\end{equation}
 For the second integral,  we have first,  for any $\si\in(0,2),$ 
 \begin{align*}
&g^2(s,x) |\varphi(y)-\varphi(x)|^2\\
&\le g^2(s,x) |\varphi(y)-\varphi(x)|^2 (I_{[0,M]^d} (x)+ I_{[0,M]^d} (y) )\\
&= g^2(s,x) |\varphi(y)-\varphi(x)|^{2-\si}|\varphi(y)-\varphi(x)|^{\si} (I_{[0,M]^d} (x)+ I_{[0,M]^d} (y) )\\
&\le 2^{2-\si} d^{\si/2}M^{-\si /2} g^2(s,x) (I_{[0,M]^d} (x)+ I_{[0,M]^d} (y) )(|y-x|^\si \wedge|y-x|^2),
\end{align*}
Consequently,  we have 
\begin{align}
 &\frac{C_{\alpha, d}}{2} \int_{\R^d}\int_{\R^d} \frac{g^2(s,x)| \varphi(y)-\varphi(x)|^2}{|y-x|^{d+\alpha}}dydx\notag\\
&\le C_{\alpha, d}  2^{2-\si} d^{\si/2}M^{-\si /2}\int_{[0,M]^d}\int_{\R^d} \frac{g^2(s,x)(|y-x|^\si \wedge|y-x|^2)}{|y-x|^{d+\alpha}}dydx\notag\\
&\quad +C_{\alpha, d} 2^{2-\si}d^{\si/2} M^{-\si /2}     \int_{\R^d}\int_{[0,M]^d} \frac{g^2(s,x)(|y-x|^\si\wedge|y-x|^2)}{|y-x|^{d+\alpha}}dydx\notag\\
&=C_{\alpha, d} 2^{2-\si}d^{\si/2} M^{-\si /2}   \int_{[0,M]^d} g^2(s,x)dx \int_{\R^d} \frac{|y|^\si\wedge|y|^2}{|y|^{d+\alpha}}dy\notag\\
&\quad +C_{\alpha, d} 2^{2-\si} d^{\si/2}M^{-\si /2}     \int_{[0,M]^d} \int_{\R^d} \frac{g^2(s,x+y)(|x|^\si\wedge|x|^2)}{|x|^{d+\alpha}}dxdy\notag\\
& \le CM^{-\si/2}, \label{e5.12}
\end{align}
for some constant $C$ depending only on $(\alpha, d)$,  where in the last second step, the two integrals are finite for  $\alpha\in(\sigma,2)$.

Combining \eqref{e5.7}, \eqref{e5.8}, \eqref{e5.10}, \eqref{e5.11} and \eqref{e5.12}, and recalling $b_M$  given in \eqref{eq.bm}, we have
\begin{align}
 b_MJ_1&=b_M\int_0^1 \mathcal E_\alpha(f(s,\cdot), f(s,\cdot)) ds\notag\\
&\le \int_0^1 G(s)\mathcal E_\alpha(f(s,\cdot), f(s,\cdot)) ds \notag\\
&\le (1+\varepsilon)\int_0^1 \mathcal E_{\alpha,M} (g(s,\cdot), g(s,\cdot)) ds  +C (1+1/\varepsilon) M^{-\si/2}\notag\\
&=(1+\varepsilon)J+C  (1+1/\varepsilon) M^{-\si/2}. \label{e5.13}
\end{align}

 Secondly,  we estimate the first term inside $\{\}$ in \eqref{e.7.2-add}.  Recall that $K_{B, b}(\cdot)$ is supported on $[-2B, 2B]^d$, hence for any fixed $y\in [0, M]^{d}, K_{B,b}(y-\cdot)$ is supported on $[-2B, M+2B]^d$. Therefore, for $y\in [0,M]^d$,
\begin{align}
&\int_{[0,M]^d}\widetilde K_{M}(y-x)g^2(s, x)dx
=\int_{[0,M]^d}\sum_{z\in\mathbb Z^d} K_{B,b}(y-x+zM) g^2(s, x)dx\notag\\
&=\int_{\R^d} K_{B,b}(y-x) g^2(s, x)dx=\int_{[-2B, M+2B]^d} K_{B,b}(y-x) g^2(s, x)dx \label{eq7.12-add}
\end{align}
where the second equality follows from the $M$-periodicity of $g(s,\cdot)$. 

Denote 
\begin{equation}
\tilde E_M:=[-2B, M+2B]^d\setminus[M^{1/2}, M-M^{1/2}]\,. \label{e.7.4-add'}
\end{equation}
Then there exists a constant $C$ depending only on $d$ such that
\begin{equation}\label{C-0''}
\int_{\tilde E_M}  g^2(s,x) dx\le  C M^{-1/2}, \, \forall s\in[0,1].
\end{equation}
This is because of \eqref{C-0'}, the periodicity of $g(s,\cdot)$ and the fact that there is a partition of $[-2B,M+2B]^d\setminus [0, M]^d$ 
such that the number of parts in the partition is finite depending only on $d$ and  each part from this partition can be shifted by $zM$ for some $z\in\Z^d$ to become a subset of
$[0, M]^d\setminus [2B, M-2B]^d\subset [0, M]^d\setminus [\sqrt{M}, M-\sqrt{M}]^d$ when $M>4B^2$.

Notice that
\[
g^2(s,x)=
G(s)f^2(s,x), \quad \forall \ x\in [M^{1/2}, M-M^{1/2}] = [-2B, M+2B]^d\setminus \tilde E_M \,,
\]
where $\tilde E_M$ is defined by \eqref{e.7.4-add'}.  We can bound the integral in
\eqref{e.7.2-add} as follows, noting \eqref{eq7.12-add},
\begin{align}\label{C-2}
 I:=&  \int_{[0,M]^d}\int_{\R}\left[\int_0^1\!\!
\int_{[0,M]^d}|u-s\vert^{-\frac{1+\beta_0}2}\widetilde K_{M}(y-x)g^2(s, x)dxds\right]^2dudy\notag\\
=&  \int_{[0,M]^d}\int_{\R}\left[\int_0^1\!\!
\int_{[-2B, M+2B]^d}|u-s\vert^{-\frac{1+\beta_0}2} K_{B,b}(y-x)g^2(s, x)dxds\right]^2dudy\notag\\
\le & (1+\varepsilon)\int_{[0,M]^d}\int_{\R}\left[\int_0^1\!\!
\int_{[-2B, M+2B]^d\setminus \tilde E_M}|u-s\vert^{-\frac{1+\beta_0}2}K_{B,b}(y-x) g^2(s, x)dxds\right]^2dudy\notag\\
&+(1+1/\e)\int_{[0,M]^d}\int_{\R}\left[\int_0^1\!\!
\int_{\tilde E_M}|u-s\vert^{-\frac{1+\beta_0}2}K_{B,b}(y-x)  g^2(s, x)dxds\right]^2dudy\notag\\
\le &(1+\e)\max_{s\in [0,1]}G(s)\int_{[0,M]^d}\int_{\R}\left[\int_0^1\!\!
\int_{\R^d}|u-s\vert^{-\frac{1+\beta_0}2}K_{B,b}(y-x)f^2(s,x)dxds\right]^2dudy\notag\\
&+(1+1/\e)\int_{[0,M]^d}\int_{\R}\left[\int_0^1\!\!
\int_{\tilde E_M}|u-s\vert^{-\frac{1+\beta_0}2}K_{B,b}(y-x)   g^2(s, x)dxds\right]^2dudy\notag\\
\le & (1+\e)\left(C_0C(\gamma)\right)^{-1/2}I_1
 +(1+1/\e) I_2\,,
\end{align}

where 
\begin{eqnarray*}
I_1 &:=&  \int_0^1\int_0^1\int_{\R^d\times\R^d}\frac{\gamma(x-y)}{|r-s|^{\beta_0}}f^2(s,x)f^2(r,y)dxdydrds \\
I_2 &:=&\int_{[0,M]^d}\int_{\R}\left[\int_0^1\!\!
\int_{\tilde E_M}|u-s\vert^{-\frac{1+\beta_0}2}K_{B,b}(y-x)   g^2(s, x)dxds\right]^2dudy\,. 
\end{eqnarray*}
We consider $I_2$.   Note  that the function $K_{B,b}(\cdot)$ is uniformly bounded, say, by $D$.
Then we have 
\begin{align}\label{C-3}
I_2
=& C_0^{-1}\int_{[0,M]^d}\int_0^1\int_0^1 |r-s|^{-\beta_0}  dr dsdy \int_{\tilde E_M} K_{B,b}(y-x_1)g^2(s,x_1) dx_1\notag\\
&\int_{\tilde E_M}  K_{B,b}(y-x_2) g^2(r,x_2)dx_2 \notag\\
\le & C_0^{-1}\int_{[0,M]^d}\int_0^1\int_0^1 |r-s|^{-\beta_0}  dr dsdy\int_{\tilde E_M} K_{B,b}(y-x_1)g^2(s,x_1) dx_1\int_{\tilde E_M}Dg^2(r,x_2)dx_2 \notag\\
\le  &CC_0^{-1} M^{-1/2} \int_{\R^d} K_{B,b}(y)dy\int_0^1\int_0^1 |r-s|^{-\beta_0}  dr ds\int_{\tilde E_M} g^2(s,x_1) dx_1\notag\\
\le  &CC_0^{-1} M^{-1/2}\int_{\R^d} K_{B,b}(y)dy(1-\beta_0)^{-1}\int_0^1\left[ s^{1-\beta_0}+(1-s)^{1-\beta_0} \right]\int_{\tilde E_M} g^2(s,x_1) dx_1 ds\notag\\
\le &CC_0^{-1} M^{-1/2}\int_{\R^d} K_{B,b}(y)dy(1-\beta_0)^{-1}(2-\beta_0)^{-1}\int_0^1\int_{\tilde E_M} g^2(s,x_1) dx_1 ds\notag\\
\le &2CC_0^{-1} M^{-1/2}\int_{\R^d} K_{B,b}(y)dy(1-\beta_0)^{-1}(2-\beta_0)^{-1}(2dM^{-1/2})\notag\\
=&C\Big(K_{B,b}(\cdot),d,\beta_0\Big)M^{-1},
\end{align}
where the third step and the last second step follow from (\ref{C-0''}).

Finally, combing (\ref{e5.13}), (\ref{C-2}) and (\ref{C-3}), we can bound the quantity inside
$\{\ \}$ in \eqref{e.7.2-add} as follows   (recall that  $J$ and $J_1$ are defined by \eqref{e5.13} and $b_M$ is given in \eqref{eq.bm})
\begin{align*}
 \frac12 C_0C(\gamma)I-J  
\le &\frac{1+\e}2 I_1 +C(1+1/\e)M^{-1} -\frac{b_M}{1+\varepsilon} J_1+C \frac{1+1/\varepsilon}{1+\varepsilon} M^{-\si/2}\\
\le  &\frac{b_M}{1+\varepsilon}\Bigg\{\frac{(1+\varepsilon)^2}{2b_M}\int_0^1\int_0^1\int_{\R^d\times\R^d}\frac{\gamma(x-y)}{|r-s|^{\beta_0}}f^2(s,x)f^2(r,y)dxdydrds\\
&- \int_0^1 \mathcal E_{\alpha}(f(s,\cdot), f(s,\cdot))ds  \Bigg\} +C(1+1/\e)M^{-1}+C\frac{1+1/\varepsilon}{1+\varepsilon}  M^{-\si /2} .
\end{align*}
Therefore,
\begin{align*}
&\sup_{g\in\mathcal A_{\alpha,d}^M}\left\{ \frac12 C_0C(\gamma) I-J\right\}\\
\le& \frac{b_M}{1+\varepsilon}\sup_{f\in\mathcal A_{\alpha,d}}\Bigg\{\frac{(1+\varepsilon)^2}{2b_M} \int_0^1\int_0^1\int_{\R^d\times\R^d}\frac{\gamma(x-y)}{|r-s|^{\beta_0}}f^2(s,x)f^2(r,y)dxdydrds\\
&-  \int_0^1\mathcal E_\alpha(f(s,\cdot), f(s,\cdot))ds\Bigg\}+C(1+1/\e)M^{-1}+C\frac{1+1/\varepsilon}{1+\varepsilon} M^{-\si/2}\\
= &  \frac{b_M}{1+\varepsilon}  \left( \frac{(1+\varepsilon)^2}{b_M} \right)^{\frac{\alpha}{\alpha-\beta}}\sup_{f\in\mathcal A_{\alpha,d}}\Bigg\{ \frac12\int_0^1\int_0^1\int_{\R^d\times\R^d}\frac{\gamma(x-y)}{|r-s|^{\beta_0}}f^2(s,x)f^2(r,y)dxdydrds\\
&-   \int_0^1\mathcal E_\alpha(f(s,\cdot), f(s,\cdot))ds\Bigg\}+C(1+1/\e)M^{-1}+C\frac{1+1/\varepsilon}{1+\varepsilon} M^{-\si /2},
\end{align*}
where the last step follows from \eqref{mscaling}. 
Noting that  $\lim_{M\to\infty} b_M=1$, we have, by choosing $\varepsilon$ arbitrarily small,
\begin{align*}
&\lim_{M\to \infty}\sup_{g\in\mathcal A_{\alpha,d}^M}\Bigg\{  \frac12 C_0C(\gamma)I-J \Bigg\}\le    \sup_{f\in\mathcal A_{\alpha,d}}\Bigg\{\frac12 \int_0^1\int_0^1\int_{\R^d\times\R^d}\frac{\gamma(x-y)}{|r-s|^{\beta_0}}\\
&\qquad \qquad \qquad\qquad   f^2(s,x)f^2(r,y)dxdydrds-   \int_0^1\mathcal E_\alpha(f(s,\cdot), f(s,\cdot))ds\Bigg\}.
\end{align*}
Hence \eqref{e.7.2-add} is proved, provided  $\alpha\in (\sigma,2)$. Note that $\sigma\in(0,2)$ is arbitrary,  therefore \eqref{e.7.2-add} holds for $\alpha\in(0,2).$ 

The proof is concluded, noting that for the case $\alpha=2$, \eqref{e.7.2-add} can be proved in a similar way as in \cite[Lemma A.3]{CHSX}.  
\end{proof}

 \begin{lemma}\label{lemma4} Let $f\in L^2(\R^d)$ and $h\in L^2(\mathbb T_M^d)$. Then,
\begin{align}\label{e.lemma4-1}
&2\int_{\R^d} \Big(1-\cos(2\pi \xi\cdot y)\Big) |\widehat f(\xi)|^2 d\xi=\int_{\R^d}|f(x+y)-f(x)|^2 dx,
\end{align}
and
\begin{align}\label{e.lemma4-2}
&\frac{2}{M^d}\sum_{k\in \mathbb Z^d} \Big(1-\cos(2\pi k\cdot y)\Big) |\widehat h(k)|^2 =\int_{[0,M]^d}|h(x+My)-h(x)|^2 dx.
\end{align}
\end{lemma}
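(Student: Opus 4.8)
The plan is to obtain both identities from Plancherel's theorem (on $\R^d$) and Parseval's identity (on the torus $\mathbb T_M^d$), combined with the elementary fact that translating a function multiplies its Fourier transform by a unimodular exponential; the factor $2(1-\cos\theta)$ will then appear by expanding $|e^{i\theta}-1|^2$.

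For \eqref{e.lemma4-1} I would first record the translation rule: for $f\in L^2(\R^d)$ and a fixed $y\in\R^d$, the translate $\tau_y f:=f(\cdot+y)$ has Fourier transform $\widehat{\tau_y f}(\xi)=e^{2\pi i\xi\cdot y}\widehat f(\xi)$. This is an immediate change of variables when $f\in L^1\cap L^2$, and it passes to all of $L^2(\R^d)$ by density, since both sides depend continuously on $f$ in the $L^2$-norm. Consequently $f(\cdot+y)-f$ has Fourier transform $(e^{2\pi i\xi\cdot y}-1)\widehat f(\xi)$, and Plancherel's theorem yields
\[
\int_{\R^d}|f(x+y)-f(x)|^2\,dx=\int_{\R^d}\bigl|e^{2\pi i\xi\cdot y}-1\bigr|^2\,|\widehat f(\xi)|^2\,d\xi .
\]
Since $|e^{i\theta}-1|^2=2-2\cos\theta$, applied with $\theta=2\pi\xi\cdot y$, this is precisely \eqref{e.lemma4-1}.

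For \eqref{e.lemma4-2} I would run the same argument on $\mathbb T_M^d$, viewing $h$ as an $M$-periodic function on $\R^d$. The substitution $u=x+My$ in $\int_{[0,M]^d}h(x+My)e^{-2\pi i k\cdot x/M}\,dx$, followed by the observation that the integrand $u\mapsto h(u)e^{-2\pi i k\cdot u/M}$ is $M$-periodic (so its integral over the shifted cell $[0,M]^d+My$ equals that over $[0,M]^d$), shows that the $k$-th Fourier coefficient of $h(\cdot+My)$ equals $e^{2\pi i k\cdot y}\widehat h(k)$. Hence $h(\cdot+My)-h$ has Fourier coefficients $(e^{2\pi i k\cdot y}-1)\widehat h(k)$, and Parseval's identity on $\mathbb T_M^d$ in the normalization fixed earlier in the paper, namely $\int_{[0,M]^d}|\phi(x)|^2\,dx=M^{-d}\sum_{k\in\mathbb Z^d}|\widehat\phi(k)|^2$, gives
\[
\int_{[0,M]^d}|h(x+My)-h(x)|^2\,dx=\frac1{M^d}\sum_{k\in\mathbb Z^d}\bigl|e^{2\pi i k\cdot y}-1\bigr|^2\,|\widehat h(k)|^2 ;
\]
expanding $|e^{i\theta}-1|^2=2(1-\cos\theta)$ once more produces \eqref{e.lemma4-2}.

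I do not expect any genuine obstacle here: the only points deserving a line of care are the passage of the translation rule from $L^1\cap L^2$ to $L^2$ (a routine density argument) and checking that the normalizing constant $M^{-d}$ in the torus Parseval identity is the one consistent with the convention $\widehat h(k)=\int_{[0,M]^d}h(x)e^{-2\pi i k\cdot x/M}\,dx$ used in the paper. With those two observations in place, each identity is a one-line computation.
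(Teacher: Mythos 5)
Your proof is correct and follows essentially the same route as the paper: Plancherel/Parseval combined with the modulation rule for translations (whose torus version the paper verifies by the same periodicity argument you give), with the factor $2(1-\cos\theta)$ coming from a trigonometric identity. The only cosmetic difference is that you expand $|e^{i\theta}-1|^2=2(1-\cos\theta)$ for the one-sided difference $h(\cdot+My)-h$, whereas the paper writes $1-\cos\theta=2\sin^2(\theta/2)$ and works with the symmetric difference $h(\cdot+\tfrac{My}{2})-h(\cdot-\tfrac{My}{2})$, which is the same computation.
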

\begin{proof}
We will prove \eqref{e.lemma4-2} only, and \eqref{e.lemma4-1} can be proved in the same spirit. 
Noting that $1-\cos(2\pi k\cdot y)=2\sin^2(\pi k\cdot y)$, we have
\begin{align*}
&\frac{2}{M^d}\sum_{k\in \mathbb Z^d} \Big(1-\cos(2\pi k\cdot y)\Big) |\widehat h(k)|^2 =\frac{1}{M^d}\sum_{k\in \mathbb Z^d} |2\sin(\pi k\cdot y)\widehat h(k)|^2\\
=&\frac{1}{M^d}\sum_{k\in \mathbb Z^d} \left|\left(e^{i\pi k\cdot y}-e^{-i\pi k\cdot y}\right)\widehat h(k)\right|^2=\int_{[0,M]^d} \left |h(x+\frac {My}2)-h(x-\frac {My}2)\right|^2 dx.
\end{align*}
The last equality holds because of the Parseval's identity 
$$\frac1{M^d} \sum_{k\in \mathbb Z^d} |\widehat g(k)|^2 =\int_{[0,M]^d} \left |g(x)\right|^2 dx, $$
and the fact that for any $a\in \R^d$ and any $M$-periodic function $g$,
\begin{align*}
& \widehat{g(\cdot +a)}(k)=\int_{[0,M]^d} e^{-2\pi i k\cdot y /M} g(y+a) dy=\int_{[0,M]^d} e^{-2\pi i k\cdot (y+a)/M} g(y+a) dy \, e^{2\pi i k\cdot a/M }\\
&=\int_{[0,M]^d} e^{-2\pi i k\cdot y/M} g(y) dy \, e^{2\pi i k\cdot a/M } =  \widehat g(k) e^{2\pi i k\cdot a/M},
\end{align*}
where the  third equality holds because $e^{-2\pi i k\cdot y/M} g(y)$ is an $M$-periodic function in $y$.
\end{proof}

\end{document}